\def\mF{{\mathbb F}}
\DeclareMathOperator{\aut}{Aut}
\DeclareMathOperator{\AGL}{A\Gamma L}
\DeclareMathOperator{\GaL}{{\rm \Gamma}L}
\DeclareMathOperator{\inv}{Inv}
\DeclareMathOperator{\ord}{ord}
\DeclareMathOperator{\orb}{Orb}
\DeclareMathOperator{\sym}{Sym}
\DeclareMathOperator{\rad}{rad}
\DeclareMathOperator{\hol}{Hol}
\def\cR{{\mathcal R}}
\def\cX{{\mathcal X}}
\def\cT{{\mathcal T}}
\def\ov{\overline}
\def\la{\langle}
\def\ra{\rangle}
\def\@seccntformat#1{\csname the#1\endcsname. }
\def\@biblabel#1{#1.}
\newcommand{\ldb}{\left\{\!\!\left\{}
\newcommand{\rdb}{\right\}\!\!\right\}}
\title[Relatively closed subgroups of permutation groups]{Relatively closed subgroups of permutation groups with a cyclic regular normal subgroup}
\author{A.A. Buturlakin}
\address{Sobolev Institute of Mathematics, Novosibirsk, Russia}
\email{buturlakin@math.nsc.ru}
\author{A.V. Vasil'ev}
\address{Sobolev Institute of Mathematics, Novosibirsk, Russia}
\email{vasand@math.nsc.ru}
\date{}
\newtheorem{prop}{Proposition}[section]
\newtheorem{lemm}[prop]{Lemma}
\newtheorem{theo}[prop]{Theorem}
\newtheorem*{theo1}{Theorem 1}
\newtheorem*{theo2}{Theorem 2}
\newtheorem{corl}[prop]{Corollary}
\newtheorem{prob}[prop]{Problem}
\theoremstyle{definition}
\begin{document}

\begin{abstract} Motivated by some known problems concerning combinatorial structures associated with finite one-dimensional affine permutation groups, we study subgroups which are closed in $\operatorname{\Gamma{L}}_1(q)$. This brings us to a description of the relatively closed subgroups of permutation groups with a cyclic regular normal  subgroup. Our results, in particular, provide a classification of the minimal nontrivial one-dimensional affine association schemes which generalizes the recent Muzychuk classification of the one-dimensional affine rank 3 graphs.\smallskip

\noindent\textsc{Keywords:} one-dimensional affine group, permutation group, closure of permutation group, association scheme.\smallskip

\noindent\textsc{MSC 2020:} Primary 20B05; Secondary 20B25, 05E30.

\noindent\textsc{UDC:} 512.542.7.
\end{abstract}

\maketitle

\section{Introduction}\label{s:intro}

Let $\mF_q$ be the finite field of order $q=p^d$, where $p$ is a prime.  A {\em one-dimensional affine group} $G$ is a subgroup of $\AGL_1(q)$ containing the subgroup of translations, that is, a group of the form  $G=G_0\ltimes\mF_q^+$, where $\mF_q^+$ is the additive group of the field $\mF_q$ and $G_0$ is the zero stabilizer. The stabilizer $G_0$ is a subgroup of the general semilinear group $\GaL_1(q)\simeq\la\phi\ra\ltimes\mF_q^*$, where $\mF_q^*$ is the multiplicative group of $\mF_q$ and $\phi: x\mapsto x^p$ is the Frobenius automorphism of~$\mF_q$. Here we consider $G=G_0\rtimes \mF_q^+$ as a permutation group on the set $\mF_q$, so $\mF_q^+$ is the regular normal subgroup of $G$ and $G_0$ is a point stabilizer with respect to this action.

The motivation for the present paper comes from the study of combinatorial structures associated with finite one-dimensional affine permutation groups. To explain this, we need some notations. For a group $G$ acting on a set $W$ and a positive integer $m$, let $\orb_m(G,W)$ refer to the set of orbits of $G$ on~$W^m$ with respect to the induced componentwise action: $(w_1,\ldots,w_m)^g=(w_1^g,\ldots,w_m^g)$; elements of $\orb_m(G,W)$ are called {\em$m$-orbits}. An $m$-ary relational structure $(W,\cR)$, where $\cR\subseteq2^{W^m}$ consists of some subsets of $W^m$, is said to be {\em associated} with a group $G\leqslant\sym(W)$ if the elements of $\cR$ belong to $\orb_m(G,W)$. In the case $m=2$, it can be easily verified that the partition $\orb_2(G,W)$ of $W\times W$ is a {\em coherent configuration}, and even, if additionally $G$ is transitive on $W$, an {\em association scheme} on $W$ (see, e.g., \cite[Section 2.2.2]{CP}, where such configurations and schemes are called {\em schurian} and denoted by $\inv(G)$).

The examples of discrete structures associated with one-dimensional affine groups are numerous. They include the Paley graphs and Paley tournaments, see, e.g., \cite{Jon}, and most of the Peisert graphs \cite{Pei}. Together with the Van Lint--Schrijver graphs these graphs form the class of one-dimensional affine rank 3 graphs, see \cite[Theorem~3]{M}.  The cyclotomic association schemes over finite fields introduced by Delsarte in 1973, see \cite[p.~66]{Del}, and their generalization to the schemes over near-fields in~\cite{BPRB} are also among such examples. In fact, the cyclotomic schemes is a subclass of the Frobenius schemes associated with Frobenius groups, which, in turn, constitute a subclass of the pseudocyclic and, consequently, of the equivalenced schemes \cite{MP}. It is worth noting that there are some challenging combinatorial problems whose solutions were reduced to the case of structures associated with the subgroups of~$\AGL_1(q)$, see, e.g., \cite[Subsections 6.1 and~6.4]{CHPV}.

Despite the above, the subgroup structure of $\AGL_1(q)$ has not yet received the attention it deserves. Indeed, the authors classifying in details other subclasses of permutation groups in this case often provide only a short note like: `... or $G\leqslant\AGL_1(q)$', see, e.g. the classifications of the affine rank $3$ groups in \cite{Lie} or the $\frac{3}{2}$-transitive groups in~\cite{LPS}. In the case of rank 3 groups, the general description of the one-dimensional affine groups was obtained by Foulser and Kallaher \cite{FK}. However, their description has a disadvantage --- the corresponding $2$-orbits were not described. Evidently, different groups can have the same $2$-orbits, thus producing the same discrete structures. The disadvantage was recently overcome  by Muzychuk who provided a clear and easily verifiable description of the one-dimensional affine rank 3 graphs, see \cite[Theorem~3]{M}. In the present paper, we extend Muzychuk's approach to the general situation, when the number of irreflexive $2$-orbits of $G$ is not limited by~2. We base our efforts on the notion of the closedness of a permutation group which goes back to Wielandt's lectures \cite{Wi}.

According to Wielandt, given a permutation group $H\leqslant\sym(W)$, the largest subgroup of $\sym(W)$ with the set of $m$-orbits equal to $\orb_m(H,W)$ is called the {\em $m$-closure} of $H$, and $H$ is said to be {\em $m$-closed} if $H$ coincides with its $m$-closure (if $m=1$ we simply say that $H$ is {\em closed}). If we replace $\sym(W)$ in these definitions by some permutation group $G\leqslant\sym(W)$, then we get the definitions of the relative closure and the relatively closed subgroup (see, e.g., \cite{PV}), that is, the relative $m$-closure of a subgroup $H$ of $G$ is the largest subgroup $K$ of $G$ with $\orb_m(H, W)=\orb_m(K, W)$. Consequently, $H$ is {\em relatively $m$-closed in $G$} if $H$ coincides with its relative $m$-closure in $G$. For $m=1$, we say shortly that $H$ is {\em relatively closed in $G$}.

It follows from the definition that the relatively $2$-closed in $\AGL_1(q)$ one-dimensional affine subgroups $H$ are in a one-to-one correspondence with the partitions  $\orb_2(H,\mF_q)$ of the cartesian square $\mF_q\times\mF_q$. So in order to describe the association schemes associated with subgroups of $\AGL_1(q)$ it suffices to describe the relatively $2$-closed subgroups in $\AGL_1(q)$. Moreover, since the $2$-orbits are in a one-to-one correspondence with the orbits of a point stabilizer $H_0\leqslant\GaL_1(q)$, it suffices to describe the relatively closed subgroups in $\GaL_1(q)$.

One of the basic ideas of Muzychuk's paper \cite{M} that we are adopting here is to consider subgroups of $\GaL_1(q)$ as members of a wider class of permutation groups --- groups with a cyclic regular normal subgroup and a cyclic point stabilizer. The main benefit of such transition is that this wider class of groups behaves well under factorizations (see details in Lemmas~\ref{l:QuotientByRadical}, \ref{l:QuotientBijection} and the remark after them).

Thus, let $G$ be a permutation group with a cyclic regular normal subgroup $W$ and a cyclic point stabilizer $A$. Then $G=A\ltimes W$ is a semidirect product of $W$ and $A$. As usual, we consider $G$ as a permutation group on a set $W$ via the action $u^{bv}=u^bv$ for $u,v\in W$ and $b\in A$. As the first main result of the article, we provide the necessary and sufficient conditions for a subgroup $H$ of $G$ to be relatively closed in $G$. The statement uses the notion of the radical $\rad(H)$ of $H$, which can be defined as the largest subgroup of $W$ such that every orbit of $H$ is a union of cosets of~$\rad(H)$ (we discuss this notion in details in Section~\ref{s:rad}).

\begin{theo1}\label{t:m1} Let $G$ be a permutation group with a cyclic regular normal subgroup $W$ and a~cyclic point stabilizer $A$. For a subgroup $H$ of~$G$, the following hold:
\begin{enumerate}
\item[$(1)$] $H$ is relatively closed in $G$ if and only if $\rad(H)=H\cap W$ and $C_A(W/(H\cap W))\leqslant H$.
\item[$(2)$] The relative closure of $H$ in $G$ is the product of $H$, $\rad(H)$, and $C_A(W/\rad(H))$.
\end{enumerate}
\end{theo1}

The proof of Theorem~\ref{t:m1} is contained in Section~\ref{s:rcs} (Theorem~\ref{t:RadIsIntersectionIsClosed} there). In the same section we give the explicit arithmetic criterion for a subgroup $H$  to be relatively closed in $G$ (see Theorem~\ref{t:ArithmCriterion}). To do this, we fix a special representation of $H$ defined up to the conjugation in~$G$. We refer to this representation as the {\em normal form} of $H$ (see details in Section~\ref{s:nf}).

The second main and the most principal result of the article can be presented as follows: Given a relatively closed subgroup $H$ of $G$ in the normal form, we describe all maximal relatively closed subgroups of $H$ (Theorem~\ref{t:ListOfMaximal}). This result provides a possibility to describe at least recursively the lattice of relatively closed subgroups of~$G$.

The statements of Theorems~\ref{t:ArithmCriterion} and~\ref{t:ListOfMaximal}  of the article are too technical to be fully presented in the introduction. Instead, here we show how these results can be applied to obtain a wide generalization of \cite[Lemma~4]{M}, the main technical result of Muzychuk's paper.

Let $\alpha$ be an integer such that $w^a=w^\alpha$, where $W=\langle w\rangle$ and $A=\langle a\rangle$. We say that $a$ is {\em positive} if either the order of $W$ is not divisible by $4$, or $\alpha$ is $1$ modulo $4$, and {\em negative} otherwise. For coprime integers $m$ and $n$, we refer to $\ord_m(n)$ as the multiplicative order of $n$ modulo~$m$.

\begin{theo2}\label{t:m2}
Let $G=\langle a, w\rangle$ be a permutation group with a cyclic regular normal subgroup $W=\langle w\rangle$ of order $n$ and a cyclic point stabilizer $A=\langle a \rangle$. Let $\alpha$ be an integer such that $w^a=w^\alpha$.  Then every maximal intransitive subgroup of $G$ is conjugate in $G$ to exactly one of the following maximal intransitive subgroups:
\begin{itemize}
\item[$(1)$] $M(r)=\la a, w^r\ra$, where $r$ is a prime divisor of $n;$
\item[$(2)$] $P=\la aw, w^4\ra$, provided $a$ is a negative automorphism of $W$.
\end{itemize}
In particular, $M(r)$ has $\frac{r-1}{\ord_r(\alpha)}$ orbits of size $\frac{n\ord_r(\alpha)}{r}$ and one orbit of size $\frac{n}{r}$, while $P$ has two orbits of size $\frac{n}{2}$.
\end{theo2}

As \cite[Lemma~4]{M} yields the classification of the one-dimensional affine rank 3 graphs \cite[Theorem~3]{M}, Theorem~2 (it is also Theorem~\ref{t:MaximalIntransitive} proved in Section~\ref{s:max}) implies a description of the minimal nontrivial association schemes associated with one-dimensional affine groups (Corollary~\ref{c:MinimalSchemes}), which includes Muzychuk's classification as a particular case (see details in Section~\ref{s:appl}).

In fact, Theorem~\ref{t:ListOfMaximal} allows to obtain a lot more. To demonstrate this, we also list (up to conjugation in the holomorph $\hol(W)$) all second maximal relatively closed subgroups of $G$ (Theorem~\ref{t:SecondMaximal}). This gives, in particular, the description of all relatively closed subgroups of $G$ with exactly three nontrivial orbits on~$W$ (Corollary~\ref{c:RankFour}). With such a description in hand, one can easily receive a description of the relatively $2$-closed one-dimensional affine rank $4$ groups which, in turn, provides the description of the one-dimensional affine association schemes associated with them (see Section~\ref{s:appl}). Furthermore, the classification of the one-dimensional affine rank~$4$ groups can be considered as a step in a classification of the primitive affine rank $4$ permutation groups (for the non-affine case, see \cite{Cuy,MS}).

The article contains a lot of notations, so for reader's convenience, we list them here.

\medskip

{\bf Notations.} For an nonzero integer $n$ and a prime $p$,

\begin{itemize}
\item $\pi(n)$ and $\pi(n)'$ denote the set of prime divisors of $n$ and its complement in the set of all primes respectively, $\pi(G)=\pi(|G|)$ for a finite group $G$, and $\pi(g)=\pi(|g|)$ for $g\in G$;
\item $n_p$ denotes the $p$-part of $n$, that is, the maximal power of $p$ dividing $n$;
\item $n_{p'}=n/n_p$ is called the $p'$-part of $n$;
\item if $\pi$ is a set of primes, then the $\pi$-part $n_\pi$ of $n$ is the product of $n_p$ for $p\in\pi$ and $n_{\pi'}=n/n_\pi$;
\item $C_n$ denotes the cyclic group of order $n$.
\end{itemize}

For integers $m$ and $n$,

\begin{itemize}
\item $(m, n)$ and $[m,n]$ denote their greatest common divisor and least common multiple respectively;
\item $\ord_m(n)$ denotes the multiplicative order of $n$ modulo $m$, provided $(m, n)=1$.
\end{itemize}

If $a$ is either an integer or an automorphism of an abelian group, then for a positive integer~$n$,

\begin{itemize}
\item $\sigma(a,n)=1+a+\ldots+a^{n-1}$.
\end{itemize}

Let $a$ be an automorphism of the cyclic group $C_n=\la c\ra$ such that $c^a=c^{\alpha}$ for an integer~$\alpha$. We say that

\begin{itemize}
\item $a$ is {\em positive}, provided $n$ is not divisible by $4$ or $\alpha\equiv1\pmod{4}$,
\item $a$ is {\em negative} otherwise.
\end{itemize}

For a finite group $G$,

\begin{itemize}
\item $\hol(G)$ denotes the holomorph of $G$, that is, the natural semidirect product of $G$ and its automorphism group $\aut(G)$.
\end{itemize}

For a finite group $G$ acting on a set $W$,

\begin{itemize}
\item $\orb(G, W)$ denotes the set of orbits of $G$ on~$W$,
\item $G\leqslant\sym(W)$ is a permutation group on $W$, if this action is faithful, and
\item $\sym(W)$ is the symmetric group on $W$.
\end{itemize}

For a permutation group $G$ with a regular normal subgroup $W$,
\begin{itemize}
\item $G=A\ltimes W$ is a semidirect product of $W$ and a point stabilizer $A$, and
\item $G\leqslant\sym(W)$ via the action $u^{bv}=u^bv$ for $u,v\in W$ and $b\in A$.
\end{itemize}

For $G\leqslant\sym(W)$ with an abelian regular normal subgroup~$W$, $H\leqslant G$ and $\Omega\in\orb(H,W)$,

\begin{itemize}
\item $\rad(\Omega)=\{w\in W|\, \Omega^w=\Omega\}$ is the {\em radical} of the orbit $\Omega$;\smallskip

\item $\rad(H)=\bigcap\limits_{\Omega\in\orb(H,W)}\rad(\Omega)$ is the {\em radical} of the subgroup $H$.
\end{itemize}

Further, let $G=AW$ be a permutation group with

\begin{itemize}
\item $W=\la w\ra$ a cyclic regular normal subgroup of order $n$,
\item $A=\la a\ra$ a cyclic point stabilizer, and let
\item $\alpha$ be the integer such that $w^a=w^\alpha$.
\end{itemize}

For a subgroup $H$ of $G$,

\begin{itemize}
\item $H=\la bx,y\ra$, where $b\in A$, $x,y\in W$, and $\la y\ra=H\cap W$ (Lemma~\ref{l:NormalFrom});
\item $\beta$ is the integer such that $w^b=w^\beta$;
\item $V$ is the largest subgroup of $W$ such that $[\langle b\rangle,V]=V$, or equivalently $V$ is the Hall $\pi(\beta-1)'$-subgroup of~$W$ (Lemma~\ref{l:CommutatorOfAutomorphism});
\item $U$ is the $\pi(V)$-complement in $W$, so $W=U\times V$;
\item $\overline{\phantom{g}}$ is the natural homomorphism from $W$ to $W/(H\cap W)$;
\item $l=\ord_{|\overline V|}(\beta)$;
\item $H$ is said to be {\em in the normal form}, if $|x|_p>\max\{|y|_p, |[b, W]|_p\}$ for every $p\in\pi(x)$.
\end{itemize}

For a subgroup $H=\la bx, y \ra$ of $G$ in the normal form,

\begin{itemize}

\item $U_1$ is the Hall $\pi(x)$-subgroup of $U$, and
\item $U_2$ is the Hall $\pi(x)'$-subgroup of $U$, so $U=U_1\times U_2$.
\end{itemize}

Recall that if $\pi$ is a set of prime, then a $\pi$-subgroup $H$ of a group $G$ is called a {\em Hall $\pi$-subgroup}, if the index $|G:H|$ is not divisible by primes from~$\pi$. Observe that a cyclic group has the unique Hall $\pi$-subgroup for every set of primes $\pi$.

\section{Preliminaries}\label{s:prelim}

For an integer number $n$, $\pi(n)$ denotes the set of prime divisors of $n$. If $L$ is a finite group, then $\pi(L)$ denotes the set $\pi(|L|)$ and $\pi(g)$ for $g\in L$ denotes $\pi(|g|)$. If $n$ is a nonzero integer and $p$ is a prime, then $n_p$ denotes the $p$-\emph{part} of $n$, that is, the maximum power of $p$ dividing $n$. The $p'$-\emph{part} $n_{p'}$ of $n$ is $n/n_p$.

\begin{lemm}\label{l:a^n-1/a-1}\cite[Chapter IX, Lemma~8.1]{HupBl2} Let $n$ be a nonzero integer and $m$ a positive integer. Let $r$ be a prime divisor of $n-1$. Then $$(n^m-1)_r=\begin{cases} (n+1)_2m_2, \text{ if } r=2, m \text{ is even, and } n\equiv -1\pmod4,\\ (n-1)_rm_r, \text{ otherwise.}\end{cases}$$
\end{lemm}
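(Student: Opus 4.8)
The statement is a reformulation of the lifting-the-exponent lemma, so the plan is to prove the two displayed branches as identities for valuations. Writing $v_r$ for the $r$-adic valuation, so that $(x)_r=r^{v_r(x)}$, the claim is $v_r(n^m-1)=v_r(n-1)+v_r(m)$ in the generic case and $v_2(n^m-1)=v_2(n+1)+v_2(m)$ in the exceptional case $r=2$, $m$ even, $n\equiv-1\pmod4$. First I would dispose of the part of $m$ coprime to $r$: since $r\mid n-1$ we have $n\equiv1\pmod r$, so for $r\nmid m'$ the cofactor $\sigma(n,m')=1+n+\cdots+n^{m'-1}\equiv m'\not\equiv0\pmod r$, whence $v_r(n^{m'}-1)=v_r(n-1)$. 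Thus it suffices to control how the valuation grows under raising to the $r$-th power and to reduce a general $m$ to $m=m'r^k$ with $r\nmid m'$.

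For odd $r$, the key step is the single identity $v_r(N^r-1)=v_r(N-1)+1$ whenever $r\mid N-1$. I would prove it from the factorization $N^r-1=(N-1)\sigma(N,r)$ by writing $N=1+r^s t$ with $s=v_r(N-1)\geq1$ and $r\nmid t$, and expanding each $N^i\equiv1+ir^s t\pmod{r^{s+1}}$; summing over $0\leq i\leq r-1$ gives $\sigma(N,r)\equiv r+r^{s+1}t\,\tfrac{r-1}{2}\equiv r\pmod{r^{s+1}}$, where I use that $r$ is odd so $\tfrac{r-1}{2}$ is an integer, whence $v_r(\sigma(N,r))=1$. Since raising to the $r$-th power preserves the divisibility $r\mid N-1$, iterating this $k$ times starting from $N=n^{m'}$ yields $v_r(n^m-1)=v_r(n^{m'}-1)+k=v_r(n-1)+v_r(m)$, the ``otherwise'' branch. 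Note that for odd $r$ the exceptional branch never arises, and for $r=2$ with $m$ odd the same reduction gives $v_2(n^m-1)=v_2(n-1)$, again matching ``otherwise''.

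The substantive case, and the main obstacle, is $r=2$ with $m$ even; here $2\mid n-1$ forces $n$ odd, and the asymmetry between $v_2(n-1)$ and $v_2(n+1)$ modulo $4$ is exactly what splits the formula into two branches. I would replace the odd-$r$ doubling step by $v_2(N^2-1)=v_2(N-1)+v_2(N+1)$ and use the elementary fact that any odd square is $\equiv1\pmod8$, so that $v_2(M^{2^j}+1)=1$ for every $j\geq1$. Writing $m=2^k m'$ with $m'$ odd and $M=n^{m'}$ (which satisfies $M\equiv n\pmod4$ and $v_2(M\mp1)=v_2(n\mp1)$ by the odd-exponent factorizations of $M\mp1$), the telescoping sum $v_2(M^{2^k}-1)=v_2(M^2-1)+\sum_{j=1}^{k-1}v_2(M^{2^j}+1)$ collapses to $v_2(M+1)+k$ when $n\equiv-1\pmod4$ (since then $v_2(M-1)=1$) and to $v_2(M-1)+k$ when $n\equiv1\pmod4$ (since then $v_2(M+1)=1$); as $k=v_2(m)$, these are precisely the two branches. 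The only delicate point is bookkeeping: tracking which of $n\pm1$ carries the extra factor of $2$, and invoking $M^2\equiv1\pmod8$ to guarantee that every later squaring contributes exactly one factor of $2$.

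Since this is a textbook result cited from \cite{HupBl2}, in the paper itself I would simply invoke the reference; the sketch above records the argument behind it.
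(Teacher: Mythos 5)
Your proposal is correct. The paper itself gives no proof of this lemma --- it is quoted verbatim from Huppert--Blackburn \cite[Chapter IX, Lemma 8.1]{HupBl2} --- and you rightly note that invoking the reference is all that is needed; your lifting-the-exponent sketch (reduction to $m=m'r^k$ via $\sigma(n,m')\equiv m'\pmod r$, the step $v_r(N^r-1)=v_r(N-1)+1$ for odd $r$, and the telescoping $v_2(M^{2^k}-1)=v_2(M^2-1)+\sum_{j\geq1}v_2(M^{2^j}+1)$ with $M^{2^j}\equiv1\pmod 8$ for the exceptional $r=2$ branch) is a sound and complete account of the standard argument behind the citation.
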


Denote by $C_n$ the cyclic group of order $n$. Recall that the automorphism group of a cyclic group of prime order can be identified with multiplicative group of the field of the same order. In particular, it is also cyclic.

\begin{lemm}\label{l:AutCyclicGroupGorenstein}\cite[Chapter 5, Lemma 4.1]{Gor} Let $C=\langle c\rangle$ be the cyclic $p$-group of order $p^\delta$, $\delta\geqslant 2$.
\begin{itemize}
\item[$(1)$] If $p=2$ and $\delta=2$, then $\operatorname{Aut}(C)$ is generated by $b$ such that $c^b=c^{-1}$.
\item[$(2)$] If $p=2$ and $\delta>2$, then $\operatorname{Aut}(C)$ is isomorphic to $C_{2^{\delta-2}}\times C_2$ and is generated by $a$ and $b$ such that $c^a=c^5$ and $c^b=c^{-1}$.
\item[$(3)$] If $p$ is odd, $\operatorname{Aut}(C)\simeq C_{p^{\delta-1}(p-1)}$ and the Sylow $p$-subgroup of the automorphism group is generated by $a$ such that $c^a=c^{1+p}$.
\end{itemize}
\end{lemm}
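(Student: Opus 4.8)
The plan is to identify $\aut(C)$ with the unit group $(\mathbb{Z}/p^\delta\mathbb{Z})^*$, whose order is $\varphi(p^\delta)=p^{\delta-1}(p-1)$: an automorphism is determined by the image $c\mapsto c^k$ of the generator, and $c^k$ generates $C$ precisely when $(k,p)=1$, so $k\mapsto(c\mapsto c^k)$ is an isomorphism from the units modulo $p^\delta$ onto $\aut(C)$. Under this identification the inversion corresponds to the residue $-1$, while the two maps named in the statement correspond to $5$ and $1+p$. Thus everything reduces to determining the abstract structure of $(\mathbb{Z}/p^\delta\mathbb{Z})^*$ together with the orders of these distinguished residues.

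The key computation is to pin down those orders, and here I would simply invoke Lemma~\ref{l:a^n-1/a-1}. The order of a residue $n$ modulo $p^\delta$ is the least $m$ with $p^\delta\mid n^m-1$, equivalently $(n^m-1)_p\geqslant p^\delta$. For $p$ odd and $n=1+p$ we have $n-1=p$ and $r=p\neq2$, so the generic case of Lemma~\ref{l:a^n-1/a-1} gives $(n^m-1)_p=(n-1)_p\,m_p=p\,m_p$; this reaches $p^\delta$ exactly when $m_p\geqslant p^{\delta-1}$, so $1+p$ has order $p^{\delta-1}$. For $p=2$ and $n=5$ we have $n-1=4$ and $n\equiv1\pmod4$, so the special case does not apply and $(5^m-1)_2=(5-1)_2\,m_2=4\,m_2$, giving $5$ the order $2^{\delta-2}$. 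Finally $-1$ has order $2$ whenever $p^\delta>2$.

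With the orders in hand the three cases assemble quickly. For $p$ odd, write $U=(\mathbb{Z}/p^\delta\mathbb{Z})^*$ as the internal direct product of its Sylow $p$-subgroup $P$ (order $p^{\delta-1}$) and its Hall $p'$-subgroup $T$ (order $p-1$); both exist and are unique since $U$ is abelian. The residue $1+p$ lies in $P$ and has order $p^{\delta-1}=|P|$, so $P=\langle 1+p\rangle$ is cyclic, while reduction modulo $p$ carries $T$ isomorphically onto $(\mathbb{Z}/p\mathbb{Z})^*$, the multiplicative group of a field, hence cyclic of order $p-1$. As $|P|$ and $|T|$ are coprime, $U$ is cyclic of order $p^{\delta-1}(p-1)$ with Sylow $p$-subgroup $\langle1+p\rangle$, as claimed. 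For $p=2,\ \delta=2$ one checks directly that $U=\{\pm1\}$ is generated by inversion. For $p=2,\ \delta\geqslant3$ the subgroup $\langle5\rangle$ is cyclic of order $2^{\delta-2}$ and consists of residues $\equiv1\pmod4$, whereas $-1\equiv3\pmod4$; hence $\langle-1\rangle\cap\langle5\rangle=1$, and comparing orders $2\cdot2^{\delta-2}=2^{\delta-1}=|U|$ forces $U=\langle-1\rangle\times\langle5\rangle\cong C_2\times C_{2^{\delta-2}}$.

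Since the statement is entirely classical (and is cited here from Gorenstein), the only genuine work is the order computation, which Lemma~\ref{l:a^n-1/a-1} dispatches in one line each. The one point deserving care is the case $p=2,\ \delta\geqslant3$, where cyclicity genuinely fails: one must verify that inversion is not a power of $5$ (via the congruence mod $4$) rather than tacitly assume that a single generator exists, as in the odd case.
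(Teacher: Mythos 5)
Your proof is correct. Note, however, that the paper does not prove this statement at all: it is quoted verbatim from Gorenstein \cite[Chapter 5, Lemma 4.1]{Gor}, so there is no internal argument to compare yours against. What you have written is a sound self-contained derivation: the identification of $\aut(C)$ with the unit group modulo $p^\delta$ is standard, the order computations for $1+p$ and for $5$ via Lemma~\ref{l:a^n-1/a-1} are carried out correctly (in particular you rightly observe that $5\equiv1\pmod4$ keeps you in the generic case of that lemma, yielding order $2^{\delta-2}$), and the assembly of the three cases is complete --- including the one genuinely delicate point, namely that for $p=2$, $\delta\geqslant3$ the inversion is not a power of $5$ because every element of $\langle5\rangle$ is $1$ modulo $4$. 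A pleasant side effect of your route is that it derives the Gorenstein lemma from the Huppert--Blackburn lemma already present in the paper's preliminaries, so nothing beyond the paper's own toolkit is needed; the only step you leave slightly implicit is that the kernel of reduction modulo $p$ is exactly the Sylow $p$-subgroup (which is what makes $T$ embed into $(\mathbb{Z}/p\mathbb{Z})^*$), but that follows at once from the order count you give.
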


Let $C=\la c\ra$ be a cyclic group. Consider the automorphism of $C$ which maps $c$ to $c^\gamma$ for some integer $\gamma$. This automorphism is said to be \emph{negative} provided the order of $C$ is a multiple of~$4$ and $\gamma$ is $-1$ modulo $4$, and it is \emph{positive} otherwise.

If $X, Y$ are subsets of a group $G$, then the commutator subgroup $[X, Y]$ is the subgroup generated by the commutators $[x, y]$ for $x\in X$ and $y\in Y$.

\begin{lemm}\label{l:CommutatorOfAutomorphism} Let $C=\langle c\rangle$ be a cyclic group and $b\in\operatorname{Aut}(C)$. Then the following hold:
\begin{itemize}
\item[$(1)$] $[\langle b\rangle, C]=\{[b,c]| c\in C\}$, in particular, $[\langle b\rangle, C]=[b, C]$.
\item[$(2)$] Let $c^b=c^\beta$ for an integer $\beta$. There is the largest subgroup $V$ of $C$ such that $[b, V]=V$. The subgroup $V$ is the Hall $\pi(\beta-1)'$-subgroup of $C$.
 \end{itemize}
\end{lemm}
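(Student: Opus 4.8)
The plan is to reduce both parts to a single computation inside the holomorph of $C$. Writing $c^b=c^\beta$, we compute the commutator $[b,c]=(c^b)^{-1}c=c^{1-\beta}$ for every $c\in C$. Since $C$ is abelian, the map $c\mapsto[b,c]=c^{1-\beta}$ is a group endomorphism of $C$: indeed $[b,c_1c_2]=[b,c_1][b,c_2]$ because the relevant inner terms commute. Hence its image $\{[b,c]\mid c\in C\}$ is exactly the subgroup $C^{1-\beta}=C^{\beta-1}=\{c^{\beta-1}\mid c\in C\}$, the unique subgroup of $C$ of order $|C|/(|C|,\beta-1)$. This already identifies the set of single commutators as a subgroup, and since $[b,C]$ is by definition the subgroup generated by this set, we get $[b,C]=\{[b,c]\mid c\in C\}=C^{\beta-1}$.

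For part $(1)$ it then remains to show $[\langle b\rangle,C]=C^{\beta-1}$. The inclusion $C^{\beta-1}=\{[b,c]\}\subseteq[\langle b\rangle,C]$ is immediate. For the reverse inclusion I would observe that $C^{\beta-1}$ is $b$-invariant (being characteristic in the cyclic group $C$) and that $b$ acts trivially on the quotient $C/C^{\beta-1}$, since $c^b=c^\beta=c\cdot c^{\beta-1}\equiv c$ modulo $C^{\beta-1}$. Thus $\langle b\rangle$ centralizes $C/C^{\beta-1}$, which forces every commutator $[b^i,c]$ to lie in $C^{\beta-1}$ and therefore $[\langle b\rangle,C]\leqslant C^{\beta-1}$. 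Combining the two inclusions gives $[\langle b\rangle,C]=C^{\beta-1}=\{[b,c]\mid c\in C\}=[b,C]$, which is part $(1)$.

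For part $(2)$, note first that any subgroup $V\leqslant C$ is characteristic, hence $b$-invariant, so applying part $(1)$ to the cyclic group $V$ with the restricted automorphism gives $[b,V]=V^{\beta-1}$; in particular $[b,V]\leqslant V$ always. Now $V^{\beta-1}=V$ holds precisely when the endomorphism $v\mapsto v^{\beta-1}$ is surjective on $V$, and on a finite cyclic group surjectivity is equivalent to bijectivity, i.e.\ to $(|V|,\beta-1)=1$. Thus $[b,V]=V$ if and only if $|V|$ is a $\pi(\beta-1)'$-number, that is, $V$ is a $\pi(\beta-1)'$-subgroup of $C$. Among all such subgroups of the cyclic group $C$ there is a unique largest one, the Hall $\pi(\beta-1)'$-subgroup: the set of admissible orders is closed under least common multiples, so a maximum exists, and it is unique since a cyclic group has a unique subgroup of each order. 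This is exactly the claimed $V$.

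The computations here are short; the only point requiring genuine care is the passage from the set of single commutators $\{[b,c]\}$ to the full commutator subgroup $[\langle b\rangle,C]$ in part $(1)$, i.e.\ verifying that the higher commutators $[b^i,c]$ contribute nothing new. This is precisely where abelianness of $C$ is used twice over: once to make $c\mapsto[b,c]$ an endomorphism, and again to guarantee that $\langle b\rangle$ centralizes $C/C^{\beta-1}$.
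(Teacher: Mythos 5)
Your proof is correct. It diverges from the paper's mainly in presentation: the paper establishes part $(1)$ purely by commutator identities, first expanding $[b^i,c]=[b^{i-1},c^b][b,c]$ recursively to reduce everything to single commutators, and then checking $[b,c_1][b,c_2]=[b,c_2c_1]$ to see that the single commutators already form a subgroup; you instead compute $[b,c]=c^{1-\beta}$ explicitly, recognize $c\mapsto c^{1-\beta}$ as an endomorphism whose image is $C^{\beta-1}$, and obtain the reverse inclusion $[\langle b\rangle,C]\leqslant C^{\beta-1}$ by noting that $b$ centralizes $C/C^{\beta-1}$. For part $(2)$ the paper decomposes a candidate subgroup into its Sylow components and tests $[b,P_i]=P_i$ prime by prime, whereas you go straight to the criterion $(|V|,\beta-1)=1$ via surjectivity of the power map; these are the same arithmetic fact. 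Your version is more concrete and self-contained (everything is an explicit exponent computation), while the paper's identity-based argument for $(1)$ defers the use of the exponent $\beta$ until part $(2)$ and so reads as slightly more structural; both are complete and correct, including your justification that the $\pi(\beta-1)'$-subgroups of a cyclic group have a unique largest member.
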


\begin{proof} We have $[b^i, c]=[b^{i-1},c]^b[b, c]=[b^{i-1}, c^b][b, c]$. Therefore, every element of $[\langle b\rangle, C]$ is a product of elements of the form $[b, c]$ for $c\in C$. Observe now that $$[b, c_1][b, c_2]=[b, c_1][b, c_2]^{c_1}=[b, c_2c_1],$$ so the set of elements $[b, c]$ for $c\in C$ forms a subgroup and $(1)$ is proved.

 The above commutator equalities imply that $[b, C'C'']=[b, C'][b, C'']$ for $C', C''\leqslant C$. Hence if $D$ is a subgroup of $C$ and $P_1$, $\dots$, $P_s$ are the Sylow subgroups of $D$, then $[b, D]=[b, P_1]\dots[b, P_s]$ and $[b, D]=D$ if and only if $[b, P_i]=P_i$ for every $i$. The latter equality holds if and only if $\beta-1$ is not divisible by the corresponding prime $p_i$, proving~$(2)$.
\end{proof}

The following lemma is an easy corollary of Lemmas~\ref{l:AutCyclicGroupGorenstein} and~\ref{l:CommutatorOfAutomorphism}$(1)$.

\begin{lemm}\label{l:GeneratorsAndCommutators} Let $C=\langle c\rangle$ be the cyclic $p$-group of order $p^\delta$, $\delta\geqslant 2$. Let $B$ be a cyclic subgroup of $\operatorname{Aut}(C)$ of order $p^{\gamma}$. Then $B$ has a generator $b$ such that $c^b=c^\beta$ and one of the following holds:
\begin{itemize}
\item[$(1)$] If $p$ is odd, then $\beta=1+p^{\delta-\gamma}$.
\item[$(2)$] If $p=2$ and $\gamma=1$, then $\beta=-1$, or $\pm(1-2^{\delta-1})$.
\item[$(3)$] If $p=2$ and $\gamma\geqslant 2$, then  $\beta=\pm(1+2^{\delta-\gamma})$.
\end{itemize}
The order of $[B, C]$ is equal to $p^\gamma$ unless $p=2$ and $\beta$ is $-1$ modulo $4$, in which case it is equal to $2^{\delta-1}$.
\end{lemm}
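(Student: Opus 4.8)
The plan is to determine, case by case, the cyclic subgroups $B\le\operatorname{Aut}(C)$ of order $p^\gamma$ using the explicit description of $\operatorname{Aut}(C)$ in Lemma~\ref{l:AutCyclicGroupGorenstein}, to exhibit for each one a generator $b$ with the asserted exponent $\beta$, and to read off the order of $[B,C]$ from Lemma~\ref{l:CommutatorOfAutomorphism}$(1)$. For the last point, note that Lemma~\ref{l:CommutatorOfAutomorphism}$(1)$ gives $[B,C]=\{[b,c]\mid c\in C\}$, and a short computation in $\operatorname{Hol}(C)$ shows that $[b,c^t]$ is a power of $c$ with exponent divisible by $\beta-1$; hence $[B,C]=\langle c^{\beta-1}\rangle$ and $|[B,C]|=p^\delta/(p^\delta,\beta-1)=p^\delta/(\beta-1)_p$ (the last equality holding since $(\beta-1)_p\le p^\delta$ in every case below). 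Thus the closing assertion reduces to checking that $(\beta-1)_p=p^{\delta-\gamma}$ in all cases except when $p=2$ and $\beta\equiv-1\pmod4$, where instead $(\beta-1)_2=2$; this is a one-line verification for each admissible $\beta$, so the real work is identifying the subgroups and their generators.

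If $p$ is odd, then $\operatorname{Aut}(C)$ is cyclic of order $p^{\delta-1}(p-1)$ by Lemma~\ref{l:AutCyclicGroupGorenstein}$(3)$, so it has a unique subgroup $B$ of order $p^\gamma$ (forcing $\gamma\le\delta-1$). I would verify via Lemma~\ref{l:a^n-1/a-1} that $c\mapsto c^{1+p^{\delta-\gamma}}$ has multiplicative order $p^\gamma$ modulo $p^\delta$: indeed $\bigl((1+p^{\delta-\gamma})^{p^k}-1\bigr)_p=p^{\delta-\gamma+k}$, which first reaches $p^\delta$ at $k=\gamma$. Being of order $p^\gamma$, this automorphism generates $B$, giving $\beta=1+p^{\delta-\gamma}$ as in $(1)$; since $(\beta-1)_p=p^{\delta-\gamma}$, we get $|[B,C]|=p^\gamma$ with no exception.

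For $p=2$ and $\delta>2$ I use $\operatorname{Aut}(C)=\langle a\rangle\times\langle j\rangle\cong C_{2^{\delta-2}}\times C_2$ with $c^a=c^5$ and $c^j=c^{-1}$ (Lemma~\ref{l:AutCyclicGroupGorenstein}$(2)$), together with the reduction $\operatorname{Aut}(C)\to(\mathbb{Z}/4)^\times$ sending an automorphism to $\beta\bmod4$, whose kernel is exactly $\langle a\rangle$ (as $5\equiv1$ and $-1\equiv3$). When $\gamma=1$, the order-$2$ subgroups are generated by the three involutions $j$, $a^{2^{\delta-3}}$, $a^{2^{\delta-3}}j$; computing $5^{2^{\delta-3}}\equiv1+2^{\delta-1}\pmod{2^\delta}$ by Lemma~\ref{l:a^n-1/a-1} turns their exponents into $\{-1,\pm(1-2^{\delta-1})\}$, which is $(2)$. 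When $\gamma\ge2$ (so $\gamma\le\delta-2$), an element of order $2^\gamma\ge4$ must have its $\langle a\rangle$-component of order $2^\gamma$; counting then gives $\varphi(2^\gamma)\cdot2=2^\gamma$ such elements, hence exactly two cyclic subgroups of order $2^\gamma$. The maps $c\mapsto c^{\pm(1+2^{\delta-\gamma})}$ both have order $2^\gamma$ by Lemma~\ref{l:a^n-1/a-1}, and they generate different subgroups because $1+2^{\delta-\gamma}\equiv1$ while $-(1+2^{\delta-\gamma})\equiv3\pmod4$; so these are all of them, which is $(3)$. The degenerate case $\delta=2$, where $\operatorname{Aut}(C)$ has order $2$, falls under $(2)$ with $\beta=-1$. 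Finally $(\beta-1)_2=2^{\delta-\gamma}$ for the exponents $\equiv1\pmod4$ and $(\beta-1)_2=2$ for those $\equiv-1\pmod4$, yielding $|[B,C]|=2^\gamma$ and $2^{\delta-1}$ respectively.

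The main obstacle is the $p=2$, $\gamma\ge2$ step. Verifying that $c\mapsto c^{-(1+2^{\delta-\gamma})}$ has order exactly $2^\gamma$ requires the special branch of Lemma~\ref{l:a^n-1/a-1}, since this exponent is $\equiv-1\pmod4$ and the $(n+1)_2$-factor intervenes; and one must combine this order computation with the counting and the mod-$4$ reduction to guarantee that the two exhibited subgroups exhaust all cyclic subgroups of order $2^\gamma$. The odd-$p$ case, the $\gamma=1$ enumeration, and every commutator-order computation are routine once the generators and their orders are in hand.
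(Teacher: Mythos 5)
Your proof is correct and follows exactly the route the paper intends: the paper gives no written proof, stating only that the lemma is an easy corollary of Lemma~\ref{l:AutCyclicGroupGorenstein} and Lemma~\ref{l:CommutatorOfAutomorphism}$(1)$, and your case analysis (identifying the cyclic subgroups of $\operatorname{Aut}(C)$ via the Gorenstein description, computing orders with Lemma~\ref{l:a^n-1/a-1}, and reading off $[B,C]=\langle c^{\beta-1}\rangle$) is precisely the verification being left to the reader. The counting argument for $p=2$, $\gamma\geqslant 2$ and the careful use of the exceptional branch of Lemma~\ref{l:a^n-1/a-1} for exponents $\equiv-1\pmod4$ are both handled correctly.
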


It will be convenient for us to use the following parametrization of cyclic subgroups of the automorphism group of a cyclic $2$-group. It follows from Lemma~\ref{l:GeneratorsAndCommutators} that every such group can be generated by an automorphism $b$ such that $c^{b}=c^{\varepsilon(1+2^\beta)}$, where $\beta\geqslant 2$ and $\varepsilon\in\{+, -\}$. In particular, we present the identity automorphism as $c\mapsto c^{1+2^\delta}$, while the inversion as $c\mapsto c^{-(1+2^{\delta})}$.

\begin{lemm}\label{l:CosetInCyclicGroup} Let $X$ be a finite nilpotent group, $Y\leqslant X$ and $x\in X$. Denote by $m$ and $M$ respectively the minimal and the maximal orders of elements of the coset $Yx$. Then the order of every element of the coset $Yx$ divides $M$ and is divisible by $m$.
\end{lemm}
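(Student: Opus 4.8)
The plan is to exploit the nilpotency of $X$ in order to reduce the statement to a single prime, where it becomes essentially trivial. Since a finite nilpotent group is the direct product of its Sylow subgroups, I would write $X=\prod_p X_p$, where $X_p$ is the unique (normal) Sylow $p$-subgroup of $X$. As $Y$ is itself nilpotent and each of its elements of $p$-power order necessarily lies in $X_p$, we get a compatible decomposition $Y=\prod_p Y_p$ with $Y_p=Y\cap X_p$. Decomposing $x=\prod_p x_p$ accordingly, every $y\in Y$ factors as $y=\prod_p y_p$ with $y_p\in Y_p$, so each element of the coset $Yx$ has the form $\prod_p y_p x_p$, where the $p$-th factor $y_p x_p$ ranges over the coset $Y_p x_p$ inside $X_p$, and these ranges are independent across the distinct primes.

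The key consequence of this decomposition is the multiplicativity of orders. Elements of distinct factors $X_p$ commute and have coprime orders, so $\bigl|\prod_p y_p x_p\bigr|=\prod_p |y_p x_p|$, and each $|y_p x_p|$ is a power of $p$. Hence the set of orders occurring among elements of $Yx$ is exactly the set of products $\prod_p d_p$ with $d_p$ running over the set $D_p$ of orders occurring in the coset $Y_p x_p$, the choices being independent. Because such a product of positive integers is minimized (respectively maximized) by minimizing (respectively maximizing) each factor separately, we obtain $m=\prod_p \min D_p$ and $M=\prod_p \max D_p$.

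I would then conclude with the per-prime comparison. Each $D_p$ is a set of powers of the single prime $p$, and any two powers of a fixed prime are comparable under divisibility; thus $\min D_p$ divides every element of $D_p$, and every element of $D_p$ divides $\max D_p$. Consequently, for an arbitrary element of $Yx$ of order $\prod_p d_p$ we have $\min D_p \mid d_p \mid \max D_p$ for each $p$, and multiplying these divisibility relations over the pairwise coprime factors yields $m \mid \prod_p d_p \mid M$, which is exactly the assertion.

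I do not expect a genuine difficulty here. The only point requiring care is the bookkeeping of the direct-product decomposition — specifically, verifying that $Y$ splits as $\prod_p (Y\cap X_p)$ compatibly with $X=\prod_p X_p$, so that the factors $y_p x_p$ truly vary independently as $y$ ranges over $Y$. Once that is in place, the multiplicativity of orders across coprime factors and the triviality that any set of powers of a fixed prime forms a chain under divisibility make the rest routine.
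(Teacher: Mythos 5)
Your proof is correct and follows essentially the same route as the paper: decompose the nilpotent group into its Sylow subgroups, observe that the coset $Yx$ is the direct product of the cosets $Y_p x_p$, and reduce to the $p$-group case where the set of orders is a chain under divisibility. The paper states this reduction in one sentence; you have merely spelled out the same bookkeeping in full.
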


\begin{proof} Since the coset is a direct product of cosets $Y_ix_i$, where $Y_i$ is the Sylow $p_i$-subgroup of $Y$ and $x_i$ is a projection of $x$ on the Sylow $p_i$-subgroup of $X$, this lemma can be reduced to $p$-groups, for which it is trivial.\end{proof}

\section{Normal form}\label{s:nf}

Recall that a permutation group is called {\em regular}, if it is transitive and has trivial point stabilizer. If $G$ is a permutation group with a regular normal subgroup $W$, then $G=AW$ is the semidirect product of $W$ and a point stabilizer $A$, and we consider $G$ as a permutation group on $W$ via the action $u^{bv}=u^bv$ for $u,v\in W$ and $b\in A$.

In the rest of the paper, $G$ is a permutation group with a cyclic regular normal  subgroup $W=\langle w\rangle$ of order $n$ and a cyclic point stabilizer $A=\langle a\rangle$. Also $\alpha$ is an integer such that $a$ maps $w$ to $w^\alpha$. If $K$ is a group, then the holomorph $\hol(K)$ of $K$ is the natural semidirect product of $K$ and $\aut(K)$.

\begin{lemm}\label{l:NormalFrom} Let $H$ be a subgroup of $G$. Then the following hold:
\begin{itemize}
\item[$(1)$] There exist $b\in A$ and $x$, $y\in W$ such that $H=\la bx, y\ra$ and $H\cap W=\la y\ra$.
\item[$(2)$] The elements $x$ and $y$ can be chosen in such a way that $|x|_r>|y|_r$ for every $r\in\pi(x)$.
\item[$(3)$] $H^W=\left\{\la bx', y\ra\,\vline\, x'\in x[b, W]\right\}$.
\item[$(4)$] If $x'$ has the minimal order in $x[b, W]$, then $|x'|_r>|[b, W]|_r$ for every $r\in\pi(x')$.
\item[$(5)$] $$H^{\hol(W)}=\bigcup\limits_{x'\in W,\;|x'|=|x|}\la bx', y\ra^W.$$
\end{itemize}
\end{lemm}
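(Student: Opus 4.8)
\medskip

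The plan is to work in the semidirect decomposition $G=A\ltimes W$ (note $A\cap W=1$, since $A$ fixes a point while $W$ is regular) and to regard $A$ as a subgroup of $\aut(W)$, so that $G\le\hol(W)$. Part $(1)$ is then purely structural: the quotient $G/W\cong A$ is cyclic, hence so is its subgroup $HW/W\cong H/(H\cap W)$. Choosing $h\in H$ whose image generates $H/(H\cap W)$, writing $h=bx$ with $b\in A$ and $x\in W$, and taking any $y$ with $\la y\ra=H\cap W$, we obtain $H=\la h,\,H\cap W\ra=\la bx,y\ra$.

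The heart of the matter is $(2)$ and $(4)$, which I would deduce from a single fact about cyclic groups: if $C\le W$ and $x\in W$, then every element $x'$ of minimal order in the coset $xC$ satisfies $|x'|_r>|C|_r$ for each $r\in\pi(x')$. To prove this I would pass to Sylow subgroups, using Lemma~\ref{l:CosetInCyclicGroup} to reduce the minimal order of $xC$ to the product of the minimal orders of the Sylow components $x_pC_p$. Writing the Sylow $p$-subgroup as $\la w_p\ra$ of order $p^e$, with $C_p$ of order $p^c$ and $x_p=w_p^s$: if $x_p\in C_p$ the minimal representative is trivial, so $p\notin\pi(x')$; otherwise the $p$-adic valuation $v_p(s)$ is strictly below $e-c$, every element $w_p^{s+kp^{e-c}}$ of the coset has the same valuation, and hence order $p^{e-v_p(s)}>p^{c}=|C|_p$. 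Now $(2)$ is the case $C=\la y\ra$: one may replace the generator $bx$ by $bxy^t=b(xy^t)$ for suitable $t$ (legitimate as $y\in H$), i.e.\ pass to a minimal-order representative of $x\la y\ra$. Part $(4)$ is the case $C=[b,W]$ applied to the coset $x[b,W]$ supplied by $(3)$.

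For $(3)$ the computation is direct. With $w^b=w^\beta$, for any $w^j\in W$ one has $(bx)^{w^j}=(w^{-j}bw^j)x=b[b,w^j]x=b\bigl(x[b,w^j]\bigr)$, while $y^{w^j}=y$ as $W$ is abelian; thus $H^{w^j}=\la b(x[b,w^j]),y\ra$. By Lemma~\ref{l:CommutatorOfAutomorphism}$(1)$ the elements $[b,w^j]$ exhaust $[b,W]$ as $w^j$ ranges over $W$, which yields $H^W=\{\la bx',y\ra\mid x'\in x[b,W]\}$.

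Finally, $(5)$ rests on two observations about the action of $\aut(W)$: since $W$ is cyclic, $\aut(W)$ is abelian, so conjugation by $\psi\in\aut(W)$ fixes $b$; and it preserves $\la y\ra=H\cap W$, the unique subgroup of its order. Hence $H^\psi=\la b\psi(x),y\ra$, and since the restriction map $\aut(W)\to\aut(\la x\ra)$ is onto, $\psi(x)$ runs over all elements of $W$ of order $|x|$ as $\psi$ varies; thus $\{H^\psi\}=\{\la bx',y\ra\mid |x'|=|x|\}$. As every element of $\hol(W)=\aut(W)\ltimes W$ has the form $\psi w^j$ and $H^{\psi w^j}=(H^\psi)^{w^j}$, combining with $(3)$ gives $H^{\hol(W)}=\bigcup_{|x'|=|x|}\la bx',y\ra^W$. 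The main obstacle is the arithmetic underlying $(2)$ and $(4)$—keeping track of $p$-adic valuations to pin down the minimal coset representative—together with getting the commutator convention in $(3)$ and the characteristic/abelianness observations in $(5)$ exactly right.
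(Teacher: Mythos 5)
Your proposal is correct and follows essentially the same route as the paper: cyclicity of $H/(H\cap W)$ for $(1)$, adjusting $x$ within the cosets $x\la y\ra$ and $x[b,W]$ via the Sylow decomposition of the cyclic group $W$ for $(2)$ and $(4)$, the commutator computation $(bx)^u=b\bigl(x[b,u]\bigr)$ together with Lemma~\ref{l:CommutatorOfAutomorphism}$(1)$ for $(3)$, and the abelianness of $\aut(W)$ plus transitivity of $\aut(W)$ on elements of a given order for $(5)$. The only cosmetic difference is that you package $(2)$ and $(4)$ as one explicit $p$-adic valuation lemma about minimal-order coset representatives, where the paper instead strips off the offending Sylow $r$-component of $x$ prime by prime --- the same underlying argument.
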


\begin{proof} Let $bx$ be an element of $H$ whose image in $G/W$ generates $HW/W$. Then $H=\la bx, H\cap W\ra$. Since $H\cap W$ is cyclic, $(1)$ follows.

Assume that $|x|_r\leqslant |y|_r$ for some prime $r\in\pi(x)$. If $x=x_1x_2$, where $x_1$ and $x_2$ are $r$- and $r'$-elements respectively, then $x_1\in\la y\ra$, and $x_2=xy^k$ for some integer $k$. So $H=\la bx_2, y\ra$.  Since we can repeat this procedure for every such prime $r$, part $(2)$ is proved.

For $u\in W$, we have $$\la bx, y\ra^u=\la (bx)^u, y^u\ra=\la b[b, u]x, y\ra,$$ proving $(3)$.

Assume that $|x'|_r\leqslant|[b, W]|_r$ for some $r\in\pi(x')$. Then the same argument as in the proof of $(2)$ provides an element of $x[b, W]$ of smaller order, thus proving $(4)$.

If $\phi\in\aut(W)$, then $H^\phi=\la b^\phi x^\phi, y^\phi\ra=\la bx^\phi, y\ra$. Since all elements of a cyclic group of the same order form an orbit under the action of the automorphism group, $(5)$ directly follows from~$(3)$.
\end{proof}

Henceforth $H$ is a subgroup of $G$ with generators $bx$ and $y$, where $b\in A$, $x$, $y\in W$. Lemma~\ref{l:NormalFrom}$(1)$ implies that we can and will assume that $y$ generates $H\cap W$. If additionally  $|x|_r>\max\{|y|_r, |[b, W]|_r\}$ for every $r\in\pi(x)$, then we say that $H$ is in the \emph{normal form}. Lemma~\ref{l:NormalFrom} implies that every subgroup of $G$ is conjugate by an element of $W$ to a subgroup that can be presented in the normal form. Lemma~\ref{l:NormalFrom} also provides a simple characterization of $\hol(W)$-conjugacy of subgroups in the normal form.

\begin{lemm}\label{l:ConjugacyOfNormalForms} Let $H=\langle bx, y\rangle$ and $H'=\langle b'x', y'\rangle$ be subgroups of $G$ in the normal form. Then $H$ and $H'$ are conjugate in $\hol(W)$ if and only if $|b|=|b'|$, $|x|=|x'|$, and $|y|=|y'|$.
\end{lemm}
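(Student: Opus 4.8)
The plan is to reduce both implications to Lemma~\ref{l:NormalFrom}, whose part $(5)$ describes the entire $\hol(W)$-class of a normal form $\langle bx,y\rangle$ as the union of the $W$-classes of the groups $\langle bx'',y\rangle$ taken over all $x''\in W$ with $|x''|=|x|$. The parameters $|b|$ and $|y|$ are immediately conjugacy invariants: since $W\trianglelefteq\hol(W)$, any conjugating element maps $H\cap W=\langle y\rangle$ onto $H'\cap W=\langle y'\rangle$, whence $|y|=|y'|$; and since $\hol(W)/W\cong\aut(W)$ is abelian, the image $HW/W=\langle\bar b\rangle$ is conjugation-invariant, so $HW/W=H'W/W$ and $|b|=|b'|$. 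As $A$ and $W$ are cyclic, these upgrade to $\langle b\rangle=\langle b'\rangle$ and $\langle y\rangle=\langle y'\rangle$. Thus the whole content is to show that $|x|$ does not depend on the chosen normal form presentation of a given subgroup.

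Two arithmetic consequences of the normal form will drive the argument, both obtained by comparing the defining inequalities $|x|_r>\max\{|y|_r,|[b,W]|_r\}$ with the bound $|x|_r\le n_r$ and with the fact that $(bx)^e=x^{\sigma(b,e)}$ lies in $H\cap W=\langle y\rangle$, where $e=|b|$. Namely, for every $r\in\pi(x)$ one has $r\mid\beta-1$ (otherwise $|[b,W]|_r=n_r$ by Lemma~\ref{l:CommutatorOfAutomorphism}, contradicting $|x|_r>|[b,W]|_r$) and $r\mid e$ (otherwise $\sigma(b,e)$ is prime to $r$, so $|x^{\sigma(b,e)}|_r=|x|_r>|y|_r$, contradicting $(bx)^e\in\langle y\rangle$). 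I would establish these first, treating the $r$-part orders separately at each prime.

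With these in hand I would prove two invariance facts. First, changing the distinguished generators preserves $|x|$: writing $b'=b^s$ with $(s,e)=1$ and $ts\equiv1\pmod e$, the element $(b'x')^t$ equals $bx''$ with $x''=(x')^{\sigma(b',t)}$ and generates the same group modulo $H\cap W$; since $r\mid\beta-1$ gives $\sigma(b',t)\equiv t\pmod r$ while $r\mid e$ forces $r\nmid t$, the exponent $\sigma(b',t)$ is prime to every $r\in\pi(x')$, so $|x''|=|x'|$. Hence any normal form can be rewritten with a prescribed generator pair $(b,y)$ without altering the order of its $x$-part. Second, two normal forms $\langle bx_1,y\rangle$ and $\langle bx_2,y\rangle$ of $W$-conjugate subgroups share the same $|x|$: conjugation by $u\in W$ replaces $x_1$ by $[b,u]x_1\in x_1[b,W]$ (Lemma~\ref{l:NormalFrom}(3)), and equality of the subgroups forces $[b,u]x_1\in x_2\langle y\rangle$; comparing $r$-parts and using $|x_i|_r>\max\{|y|_r,|[b,W]|_r\}$ for $r\in\pi(x_i)$ shows $\pi(x_1)=\pi(x_2)$ and $|x_1|_r=|x_2|_r$ for each such $r$. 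Granting these, the lemma follows: for $(\Leftarrow)$ rewrite $\langle b'x',y'\rangle$ with the pair $(b,y)$ as $\langle bx'',y\rangle$ with $|x''|=|x'|=|x|$, which lies in $H^{\hol(W)}$ by Lemma~\ref{l:NormalFrom}(5); for $(\Rightarrow)$ Lemma~\ref{l:NormalFrom}(5) realizes $H'$ as $\langle b[b,u]x'',y\rangle$ with $|x''|=|x|$, and after the $W$-move and rewriting this becomes a normal form whose $x$-part has order $|x|$, matched to the given $\langle b'x',y'\rangle$ by the second invariance fact to yield $|x'|=|x|$.

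The main obstacle I anticipate is the prime $r=2$, that is, the case of a negative automorphism $b$: there the order of $\beta$ modulo a power of $2$ and the order of $[b,W]$ can differ by a factor of $2$, and the $2$-adic valuation of $\sigma(b,e)$ is governed by the exceptional branch of Lemma~\ref{l:a^n-1/a-1}. I would isolate this case and verify, using Lemmas~\ref{l:a^n-1/a-1} and~\ref{l:GeneratorsAndCommutators}, that the two arithmetic facts ($r\mid\beta-1$ and $r\mid e$ for $r\in\pi(x)$) and the order computations for $\sigma(b',t)$ still go through, so that the $2$-part of $|x|$ remains rigid under both changes of generator and $W$-conjugation; the strict inequalities built into the normal form are exactly what absorbs the stray factor of $2$.
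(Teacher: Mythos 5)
Your proposal is correct. The overall skeleton matches the paper's: both directions are reduced to Lemma~\ref{l:NormalFrom} (parts (3) and (5)), with the only real content being that the order of the $x$-part of a normal form is an invariant of the $\hol(W)$-class. Where you differ is in the mechanism for that invariance. The paper dispatches it in one line by citing Lemma~\ref{l:CosetInCyclicGroup}: the normal-form condition $|x|_r>|[b,W]|_r$ forces $x$ to realize the minimal element order in its coset $x[b,W]$, and that minimal order is a well-defined invariant of the coset, so $|x|=|x'|$ follows at once. You instead re-derive the rigidity by hand, first establishing the two divisibility facts $r\mid\beta-1$ and $r\mid |b|$ for $r\in\pi(x)$ and then tracking $r$-parts through the $\sigma(\beta,\cdot)$ exponents. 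Your route is longer but has a genuine advantage: it explicitly treats the change of distinguished generator (replacing $b'x'$ by $(b'x')^t$ with $b=(b')^{st\cdot s^{-1}}$, and absorbing multiples of $y'$), a point the paper's proof passes over silently when it asserts $x\in x''[b,W]$; your computation showing $\sigma(b',t)$ is prime to every $r\in\pi(x')$ is exactly what justifies that step. Two small imprecisions, neither fatal: in your second invariance fact the correct conclusion is $[b,u]x_1\in x_2^{\sigma(\beta,t)}\langle y\rangle$ with $\sigma(\beta,t)$ prime to each $r\in\pi(x_2)$ (not literally $x_2\langle y\rangle$), which is all the $r$-part comparison needs; and your worry about $r=2$ is overcautious, since the two divisibility facts only require $\beta$ odd rather than $\beta\equiv1\pmod 4$, so the negative case causes no exceptional branch here.
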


\begin{proof} If $H$ and $H'$ are conjugate in $\hol(W)$, then Lemma~\ref{l:NormalFrom}$(3), (5)$ implies that $\la b\ra=\la b'\ra$, $\la y\ra=\la y'\ra$ and $x\in x''[b, W]$ for some $x''\in W$ such that $|x''|=|x'|$. Lemma~\ref{l:CosetInCyclicGroup} implies that the minimal order of element in the coset $x''[b, W]$ is uniquely determined and hence $|x|=|x'|$. The converse statement directly follows from Lemma~\ref{l:NormalFrom}$(5)$.
\end{proof}

\section{Orbits of cyclic subgroups}\label{s:cyclic}

If $b$ is an automorphism of an abelian group or is an integer and  $n$ is a positive integer, then we denote by $\sigma(b, n)$ the sum $1+b+\dots+b^{n-1}$.

Throughout this section $H$ is a cyclic subgroup of~$G$. If $H=\langle bx\rangle$, then Lemma~\ref{l:NormalFrom} implies that up to conjugacy by an element of $W$ one may assume that $|x|_r> |[b, W]|_r$ for every $r\in\pi(x)$. Since $H$ is cyclic, we will not use the generator $y$ of the intersection in this section. We have $H\cap W=\la x^{\sigma(b,|b|)}\ra$ in this case.

\begin{lemm}\label{l:OrbitsOfCyclicGroup} Let $H$ be a cyclic subgroup of $G$. If $m$ and $M$ are the minimal and maximal lengths of an orbit of~$H$ respectively, then the length of every orbit of~$H$ is divisible by $m$ and divides~$M$. In particular, $H$ has a regular orbit.
\end{lemm}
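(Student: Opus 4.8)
The plan is to reduce this statement about orbit lengths to a statement about the orders of elements in a single coset of $W$, so that Lemma~\ref{l:CosetInCyclicGroup} can be applied directly. Write $H=\la g\ra$ with $g=bx$, $b\in A$, $x\in W$, and let $\beta$ be the integer with $w^b=w^\beta$. Identifying the point set with $W$ (as $W$ is regular), a direct computation using $g^k=b^kx^{\sigma(b,k)}$ shows that $g^k$ sends the point $w'\in W$ to $(w')^{\beta^k}x^{\sigma(\beta,k)}$, so the $H$-orbit of $w'$ has length
\[
\ell(w')=\min\{k>0\mid (w')^{\beta^k-1}x^{\sigma(\beta,k)}=1\}.
\]
Using the identity $\beta^k-1=(\beta-1)\sigma(\beta,k)$ together with $(w')^{\beta-1}=[b,w']^{-1}$, the condition inside the minimum becomes $(x[b,w'])^{\pm\sigma(\beta,k)}=1$, which depends on $w'$ only through the order $d=|x[b,w']|$. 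Thus the orbit length is a function of $d$ alone, namely $\ell(d)=\min\{k>0\mid d\mid\sigma(\beta,k)\}$.

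First I would record the monotonicity of $\ell$. The set $\{k\mid g^k\text{ fixes }w'\}$ is the full stabilizer of $w'$ pulled back to $\mathbb Z$, hence a subgroup of $\mathbb Z$ equal to $\ell(d)\mathbb Z$; in other words $d\mid\sigma(\beta,k)$ holds exactly when $\ell(d)\mid k$. Consequently, if $d_1\mid d_2$ then every $k$ with $d_2\mid\sigma(\beta,k)$ also satisfies $d_1\mid\sigma(\beta,k)$, so $\ell(d_2)\mathbb Z\subseteq\ell(d_1)\mathbb Z$ and therefore $\ell(d_1)\mid\ell(d_2)$. Hence $\ell$ is monotone with respect to divisibility.

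Next I would let $w'$ range over all of $W$. By Lemma~\ref{l:CommutatorOfAutomorphism}$(1)$ the map $w'\mapsto[b,w']$ surjects onto the subgroup $[b,W]$, so the elements $x[b,w']$ fill out the whole coset $x[b,W]$ in the cyclic, hence nilpotent, group $W$. Applying Lemma~\ref{l:CosetInCyclicGroup} to this coset, the set of orders $d=|x[b,w']|$ has a least element $m'$ dividing every $d$ and a greatest element $M'$ divisible by every $d$. By the monotonicity of $\ell$, every orbit length $\ell(d)$ is then divisible by $\ell(m')$ and divides $\ell(M')$; since $m'$ and $M'$ are themselves attained as orders in the coset, $m=\ell(m')$ is the minimal orbit length and $M=\ell(M')$ the maximal one, and $m\mid\ell\mid M$ for every orbit length $\ell$.

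Finally, the regular orbit comes for free. Since all orbit lengths divide the maximal one $M$, the least common multiple of the cycle lengths of the permutation $g$ equals $M$; but this least common multiple is the order of $g$, which is $|H|$ because $H=\la g\ra$ acts faithfully. Hence $M=|H|$ and the $M$-orbit is regular. The only genuinely delicate step is the first one, namely verifying that the orbit length is a function of the single quantity $d=|x[b,w']|$; once this is in place, Lemmas~\ref{l:CommutatorOfAutomorphism} and~\ref{l:CosetInCyclicGroup} do the real work and the remaining deductions are formal.
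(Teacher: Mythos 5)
Your proposal is correct and follows essentially the same route as the paper: reduce the orbit length of $v$ to the least $k$ with the order of $x v^{b-1}$ dividing $\sigma(\beta,k)$, observe that this length depends monotonically (for divisibility) on that order, and then apply Lemma~\ref{l:CosetInCyclicGroup} to the coset $x[b,W]$, with the regular orbit obtained from the least-common-multiple argument exactly as in the paper. Your stabilizer-subgroup justification of the monotonicity step is a slightly more explicit version of the paper's unproved ``observation'' about $\sigma(t,\mu)$, but the substance is identical.
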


\begin{proof} Let $v\in W$. Then the length of the orbit $v^H$ is the least positive integer $k$ such that $v^{(bx)^k}=v$, which is equivalent to $$v^{b^k}x^{\sigma(b, k)}=v.$$ Since $v^{b^k-1}=(v^{b-1})^{\sigma(b, k)}$, we have  $$(v^{b-1}x)^{\sigma(b, k)}=1.$$ So $k$ is the least positive integer such that the order of $v^{b-1}x$ divides $\sigma(\beta, k)$. Hence $k$ depends only on the order of $v^{b-1}x$.

Observe that if $s$ is a nonzero integer coprime to an integer $t\neq 1$ and $\mu$ is the least positive integer such that $s$ divides $\sigma(t, \mu)$, then  $\mu$ divides a positive integer $\nu$ provided $s$ divides $\sigma(t, \nu)$.

By Lemma~\ref{l:CosetInCyclicGroup}, there exist $v_1$ and $v_2\in W$ such that the order of $(v')^{b-1}x$ divides the order of $v_1^{b-1}x$ and is divisible by the order of $v_2^{b-1}x$ for every $v'\in W$. Hence the argument from the previous paragraph implies that the lengths of the corresponding orbits are in the same divisibility relations.

The order of a cyclic permutation group is the least common multiple of the lengths of its orbits. Hence an orbit of $H$ of maximal length is regular.
\end{proof}

\begin{lemm}\label{l:Wisr-group} Let $W$ be a cyclic $r$-group for a prime $r$. Assume that $H=\langle bx\rangle$ with $x$ either trivial or such that $|x|_r> |[b, W]|_r$. If $b$ is a positive automorphism of $W$, then one of the following hold:
\begin{itemize}
\item[$(1)$] If $b$ is not an $r$-element, then $H=\langle b\rangle$.

\item[$(2)$] If $b$ is an $r$-element and $x=1$, then the orbit $v^{H}$ of $v\in W$ is equal to the coset $\langle v^{b-1}\rangle v$.

\item[$(3)$] If $b$ is an $r$-element and $x\neq1$, then the orbit $v^H$ of $v\in W$ is equal to the coset~$\langle x\rangle v$.
\end{itemize}
\end{lemm}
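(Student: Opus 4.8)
The plan is to read each orbit off the formula in the proof of Lemma~\ref{l:OrbitsOfCyclicGroup} and then match its length against the size of the coset it sits in. Write $|W|=r^\delta$ and let $\beta$ be the integer with $w^b=w^\beta$. For $v\in W$ the computation in that proof gives $v^{(bx)^k}=v^{b^k}x^{\sigma(b,k)}$; since $w^b=w^\beta$ this equals $v^{\beta^k}x^{\sigma(\beta,k)}$, and as $\beta^k-1=(\beta-1)\sigma(\beta,k)$ we may rewrite it as $v^{(bx)^k}=v\,(v^{b-1}x)^{\sigma(\beta,k)}$, where $v^{b-1}=v^{\beta-1}$. Hence $v^{H}=v\cdot\{(v^{b-1}x)^{\sigma(\beta,k)}:k\geq0\}\subseteq\langle v^{b-1}x\rangle v$, so every orbit already lies in the coset $\langle v^{b-1}x\rangle v$; it remains only to check that the orbit length equals the order of $v^{b-1}x$.

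I would first settle case~$(1)$. By Lemma~\ref{l:AutCyclicGroupGorenstein}, $\aut(W)$ is a $2$-group when $r=2$, so a non-$r$-element $b$ can occur only for odd $r$; there $\aut(W)$ is cyclic of order $r^{\delta-1}(r-1)$ and its unique Sylow $r$-subgroup is the kernel of reduction modulo $r$. Thus $b$ fails to be an $r$-element precisely when $\beta\not\equiv1\pmod r$, and then Lemma~\ref{l:CommutatorOfAutomorphism}$(2)$ (the Hall $\pi(\beta-1)'$-subgroup description) gives $[b,W]=W$. Now the standing assumption on $x$ reads $|x|_r>|[b,W]|_r=|W|_r$, which is impossible for $x\in W$ unless $x=1$. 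Hence $x=1$ and $H=\langle b\rangle$.

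The arithmetic heart of cases~$(2)$ and~$(3)$ is the identity $(\sigma(\beta,k))_r=k_r$, valid whenever $b$ is a positive $r$-element (including $b=1$). Indeed, if $b\neq1$ then $r\mid\beta-1$, positivity excludes the exceptional line $r=2$, $\beta\equiv-1\pmod4$ of Lemma~\ref{l:a^n-1/a-1}, and that lemma yields $(\beta^k-1)_r=(\beta-1)_r\,k_r$, whence $(\sigma(\beta,k))_r=k_r$; the case $b=1$ is $\sigma(1,k)=k$. Put $z=v^{b-1}x$ and $|z|=r^{h}$. By Lemma~\ref{l:OrbitsOfCyclicGroup} the orbit length is the least $k\geq1$ with $z^{\sigma(\beta,k)}=1$, that is, with $r^{h}\mid\sigma(\beta,k)$; by the identity this is the least $k$ with $r^{h}\mid k$, namely $k=r^{h}=|z|$. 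Therefore $|v^{H}|=|z|=|\langle z\rangle v|$, and together with the containment above this forces $v^{H}=\langle z\rangle v$.

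It remains only to identify $\langle z\rangle$. In case~$(2)$ we have $x=1$, so $z=v^{b-1}$ and $v^{H}=\langle v^{b-1}\rangle v$. In case~$(3)$ we have $x\neq1$ and $v^{b-1}=v^{\beta-1}\in[b,W]$, so $|v^{b-1}|\leq|[b,W]|<|x|$ by the standing assumption; since in a cyclic $r$-group a factor of strictly larger order dominates, $|z|=|v^{b-1}x|=|x|$ and hence $\langle z\rangle=\langle x\rangle$, giving $v^{H}=\langle x\rangle v$. The step I expect to be the crux is the valuation identity $(\sigma(\beta,k))_r=k_r$: forcing the orbit length to match the coset size exactly is what makes the whole argument go through, and positivity is indispensable there, since it is precisely what rules out the anomalous $2$-adic behaviour recorded in Lemma~\ref{l:a^n-1/a-1}.
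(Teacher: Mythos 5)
Your proposal is correct and follows essentially the same route as the paper: both reduce the orbit of $v$ to the cyclic subgroup generated by $v^{b-1}x$ via the identity $v^{(bx)^k}=v\,(v^{b-1}x)^{\sigma(\beta,k)}$, establish the valuation identity $(\sigma(\beta,k))_r=k_r$ for positive $r$-elements (the paper does this by normalizing $\beta=1+r^{\delta'}$ via Lemma~\ref{l:GeneratorsAndCommutators}, you by invoking Lemma~\ref{l:a^n-1/a-1} with positivity excluding the exceptional $2$-adic case --- the same computation), and conclude by matching orbit length against coset size. Your expanded justification of case $(1)$, which the paper dispatches in one line via $[b,W]=W$, is a faithful elaboration rather than a different argument.
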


\begin{proof} If $b$ is not an $r$-element, then $W=[b, W]$ and $H=\langle b\rangle$.

Let $b$ be an $r$-element. By Lemma~\ref{l:GeneratorsAndCommutators}, we may assume that $\beta=1+r^\delta$ for some positive integer $\delta\leqslant\log_{r}|W|$ (if $r=2$, then $\delta\geqslant 2$). Pick $v\in W$. We have $v^{\langle bx\rangle}=1^{\langle bxv^{b-1}\rangle} v$.

Let $x=1$. The length of the orbit $1^{\langle bv^{b-1}\rangle}$ is the least positive integer $k$ such that $$1^{(bv^{b-1})^k}=v^{b^k-1}=(v^{b-1})^{\sigma(b, k)}=1.$$ The $r$-part of $(1+r^{\delta})^k-1$ is equal to $r^{\delta}k_r$. So the $r$-part of $\sigma(\beta, k)$ is equal to $k_r$. Therefore the length of $1^{\langle bv^{b-1}\rangle}$ is equal to the order of $v^{b-1}$. Since $1^{\langle bv^{b-1}\rangle}\subseteq\langle v^{b-1}\rangle$, they coincide. This proves~$(2)$.

If $x\neq1$, then the length of the orbit $1^{\langle bxv^{b-1}\rangle}$ is the least positive integer $k$ such that $$1^{(bxv^{b-1})^k}=x^{\sigma(b, k)}v^{b^k-1}=(xv^{b-1})^{\sigma(b, k)}=1.$$ As before, the length of the orbit is equal to the order of $xv^{b-1}$, but the order of $xv^{b-1}$ is equal to the order of $x$, moreover, the subgroups $\langle xv^{b-1}\rangle$ and $\langle x\rangle$ coincide. Hence $v^H=\langle x\rangle v$ as stated.
\end{proof}

Recall that $V$ is the largest subgroup of $W$ such that $[b, V]=V$. We have seen that $V$ is a Hall subgroup of $W$, see Lemma~\ref{l:CommutatorOfAutomorphism}$(2)$. We denote by $U$ the complement for $V$ in $W$. If $x$ satisfies $|x|_r> |[b, W]|_r$ for every $r\in\pi(x)$, then $x$ is a $\pi(\beta-1)$-element and so $x\in U$. In this situation, we denote by $U_1$ and $U_2$ the Hall $\pi(x)$- and $\pi(x)'$-subgroups of $U$ respectively.

\begin{prop}\label{p:OrbitsOfUnipotentAutomorphims} Let $H=\langle bx\rangle$  with $|x|_r>|[b, W]|_r$ for every $r\in\pi(x)$. Then the orbit of $v\in U$ under the action of $H$ is equal to
\begin{itemize}
\item[$(1)$] $\langle xv^{b-1}\rangle v$, provided $b$ is a positive automorphism of $W$;

\item[$(2)$] $\langle x^{b+1}v^{b^2-1}\rangle v\cup\langle x^{b+1}v^{b^2-1}\rangle v^bx$, provided $b$ is a negative automorphism of $W$.
\end{itemize}
\end{prop}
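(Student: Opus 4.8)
The plan is to reduce everything to a single translate of a ``power set'' and then analyze that set inside the Hall subgroup $U$. Starting from the identity $v^{(bx)^k}=v^{b^k}x^{\sigma(b,k)}$ established in the proof of Lemma~\ref{l:OrbitsOfCyclicGroup}, set $z=xv^{b-1}$. Using that $W$ is abelian together with the operator identity $(b-1)\sigma(b,k)=b^k-1$, one gets $z^{\sigma(b,k)}v=x^{\sigma(b,k)}(v^{b-1})^{\sigma(b,k)}v=x^{\sigma(b,k)}v^{b^k}=v^{(bx)^k}$. Hence $v^H=Tv$ with $T=\{z^{\sigma(b,k)}:k\geq0\}$. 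The hypothesis $|x|_r>|[b,W]|_r$ forces $x\in U$, and since $v\in U$ gives $v^{b-1}\in[b,U]\subseteq U$, we have $z\in U$; so the whole analysis takes place in $U$, where on each Sylow $r$-subgroup $b$ acts by an exponent $\beta_r$ with $r\mid\beta-1$.

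For part $(1)$ I would show $T=\langle z\rangle$. The inclusion $T\subseteq\langle z\rangle$ is immediate: since $W$ is cyclic, the subgroup $\langle z\rangle$ is characteristic, hence $b$-invariant, so $z^{b^i}\in\langle z\rangle$ and therefore $z^{\sigma(b,k)}=\prod_i z^{b^i}\in\langle z\rangle$. It then suffices to match cardinalities, and $|T|$ equals the orbit length, i.e.\ the least $k$ with $z^{\sigma(b,k)}=1$. Computing prime-by-prime in $U$ and using that $z^{\sigma(b,k)}=\prod_r z_r^{\sigma(\beta_r,k)}$, Lemma~\ref{l:a^n-1/a-1} yields $(\sigma(\beta_r,k))_r=k_r$ for every odd $r\mid\beta-1$ and, when $b$ is positive, also for $r=2$; consequently $z_r^{\sigma(\beta_r,k)}=1$ iff $|z_r|\mid k$, and the least common $k$ is $|z|$. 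Thus $T$ is a subset of $\langle z\rangle$ of full size, whence $T=\langle z\rangle$ and $v^H=\langle xv^{b-1}\rangle v$.

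For part $(2)$ I would pass to the index-two subgroup $\langle(bx)^2\rangle$. From the same identity, $(bx)^2$ acts on $W$ as the automorphism $b^2$ followed by translation by $x^{b+1}$, and since $b$ is negative we have $\beta^2\equiv1\pmod 8$, so $b^2$ is positive. The relevant translate element $x^{b+1}v^{b^2-1}$ lies in $U$, because $U$ is characteristic in $W$ (so $x^{b+1}\in U$) and $v^{b^2-1}\in[b^2,U]\subseteq U$; as this is the only property of the translate used in the argument of part $(1)$, that argument applies verbatim and shows the $\langle(bx)^2\rangle$-orbit of $v$ to be $\langle x^{b+1}v^{b^2-1}\rangle v$. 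The full $H$-orbit is the union of this orbit and its image under $bx$, the latter being the $\langle(bx)^2\rangle$-orbit of $v^{bx}=v^bx$; since $bx$ acts affinely as $u\mapsto u^bx$ and every subgroup of the cyclic group $W$ is $b$-invariant, this image is exactly $\langle x^{b+1}v^{b^2-1}\rangle v^bx$. Taking the union gives the stated formula.

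The main obstacle is the prime $2$ in the negative case. There the equality $(\sigma(\beta_2,k))_2=k_2$ fails: by Lemma~\ref{l:a^n-1/a-1} the integer $\sigma(\beta_2,k)$ is odd for odd $k$ and even for even $k$, which is precisely what prevents the single-coset description and splits the orbit into two cosets. The care needed is to confirm that passing to $b^2$ restores positivity and that the new translate $x^{b+1}$, whose order on the $2$-part can drop, still keeps $x^{b+1}v^{b^2-1}$ inside $U$, so that the computation of part $(1)$ is legitimately reusable.
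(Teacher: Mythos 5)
Your argument is correct, and it reaches the conclusion by a route that is organized differently from the paper's. The paper embeds $G$ into $\prod_i\hol(P_i)$ over the Sylow subgroups of $W$, invokes Lemma~\ref{l:Wisr-group} (which rests on the explicit generators $\beta=1+r^\delta$ from Lemma~\ref{l:GeneratorsAndCommutators}) to identify each Sylow orbit as a coset of prime-power length, and then uses coprimality of these lengths to conclude that the global orbit is the full direct product. You instead stay global: from $v^{(bx)^k}=(xv^{b-1})^{\sigma(b,k)}v$ you get $v^H=Tv$ with $T\subseteq\langle z\rangle$ for $z=xv^{b-1}$ (since every subgroup of the cyclic $W$ is $b$-invariant), and then you force equality by a cardinality count, computing the orbit length as the least $k$ with $|z_r|\mid\sigma(\beta_r,k)$ for all $r$ and reading off $(\sigma(\beta_r,k))_r=k_r$ directly from Lemma~\ref{l:a^n-1/a-1}. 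This bypasses both the Sylow decomposition of the orbit and Lemma~\ref{l:Wisr-group}; it also isolates the fact that the only hypothesis actually needed for the coset description in case $(1)$ is $z\in U$ (equivalently $\pi(z)\subseteq\pi(\beta-1)$) together with positivity of $b$, which makes the reapplication to $\langle b^2x^{b+1}\rangle$ in case $(2)$ immediate, whereas the paper's citation of $(1)$ there implicitly requires checking that $x^{b+1}$ still satisfies the order inequalities relative to $b^2$. The treatment of the negative case by squaring and translating by $bx$ is identical to the paper's. One small point of care: your blanket claim that $(\sigma(\beta_2,k))_2=k_2$ whenever $b$ is positive should be read with $\beta_2$ taken as the exponent of $b$ on the Sylow $2$-subgroup of $W$ (so $\beta_2\equiv1\pmod 4$ or $|W|_2\leqslant2$); with that reading the count goes through.
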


\begin{proof} The group $G$ is embedded into a direct product of $\hol(P_i)$, where $P_i$ runs through all Sylow subgroups of~$W$. Let $H_i$ be the projection of $H$ on $\hol(P_i)$.

Assume that $b$ is a positive automorphism of $W$. By parts $(2)$ and $(3)$ of Lemma~\ref{l:Wisr-group}, the length of the orbit $v_i^{H_i}$ is a power of $p_i$. The orbit $v^H$ is contained in a direct product of orbits $v_i^{H_i}$. Since the lengths of $v_i^{H_i}$ are pairwise coprime, $v^H$ coincides with the direct product of these orbits. Now $(1)$ directly follows from Lemma~\ref{l:Wisr-group}.

If $b$ is a negative automorphism of $W$, then $b^2$ is positive. Since $$v^H=v^{\langle (bx)^2\rangle}\cup (v^{\langle (bx)^2\rangle})^{bx}=v^{\langle b^2x^{b+1}\rangle}\cup (v^{\langle b^2x^{b+1}\rangle})^{bx},$$ we have  $$v^H=\langle x^{b+1}v^{b^2-1}\rangle v\cup\langle x^{b+1}v^{b^2-1}\rangle v^bx$$ by~$(1)$.
\end{proof}

For integers $m$ and $n$ refer to $(m, n)$ and $[m, n]$ as the greatest common divisor and least common multiple respectively.

\begin{prop}\label{p:GeneralOrbitOfCyclicGroup} Let $H=\langle bx\rangle$  with $|x|_r>|[b, W]|_r$ for every $r\in\pi(x)$. Let $v=v_1v_2$ where $v_1\in U$ and $v_2\in V$. Let $m_1$ and $m_2$ be the lengths of orbits $v_1^H$ and $v_2^{\langle b\rangle}$. Then the orbit $v^H$ is contained in the subset $$v_1^H\times v_2^{\langle b\rangle}=\{u_1u_2| u_1\in v_1^H, u_2\in v_2^{\langle b\rangle}\}.$$ The latter set consists of $(m_1, m_2)$ orbits of $H$ each of length $[m_1, m_2]$.
\end{prop}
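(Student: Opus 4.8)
The plan is to exploit the decomposition $W=U\times V$ to split the motion of a point into two independent coordinates: one in $U$ driven by the full element $bx$, and one in $V$ driven by $b$ alone. Recall from the computation in the proof of Lemma~\ref{l:OrbitsOfCyclicGroup} that $v^{(bx)^k}=v^{b^k}x^{\sigma(b,k)}$. Since $U$ and $V$ are characteristic, hence $b$-invariant, Hall subgroups of the cyclic group $W$, and since the hypothesis $|x|_r>|[b,W]|_r$ forces $x$ to be a $\pi(\beta-1)$-element so that $x\in U$, I would write $v=v_1v_2$ with $v_1\in U$, $v_2\in V$ and read off the two components of $v^{(bx)^k}$: its $U$-component is $v_1^{b^k}x^{\sigma(b,k)}=v_1^{(bx)^k}$ and its $V$-component is $v_2^{b^k}$. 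This coordinate separation is the heart of the argument, and it is exactly the place where $x\in U$ (together with $U\cap V=1$) is used. From it the first assertion is immediate: every element $v^{(bx)^k}$ has $U$-component in $v_1^H$ and $V$-component in $v_2^{\langle b\rangle}$, so $v^H\subseteq v_1^H\times v_2^{\langle b\rangle}$.

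Next I would check that the set $v_1^H\times v_2^{\langle b\rangle}$ is $H$-invariant. Applying $bx$ to a product $u_1u_2$ with $u_1\in v_1^H$ and $u_2\in v_2^{\langle b\rangle}$ sends, by the same separation, the $U$-component to $u_1^{bx}\in v_1^H$ and the $V$-component to $u_2^{b}\in v_2^{\langle b\rangle}$. Hence this set, which has exactly $m_1m_2$ elements because $U\cap V=1$, is a disjoint union of $H$-orbits, and it remains only to determine their common length and their number.

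Finally I would compute orbit lengths by elementary arithmetic. For any point $u_1u_2$ in the set, the equality $(u_1u_2)^{(bx)^k}=u_1u_2$ holds if and only if $u_1^{(bx)^k}=u_1$ and $u_2^{b^k}=u_2$ simultaneously; since $u_1$ lies in an $H$-orbit of length $m_1$ and $u_2$ in a $\langle b\rangle$-orbit of length $m_2$, this amounts to $m_1\mid k$ and $m_2\mid k$, whose least common solution is $k=[m_1,m_2]$. Thus every $H$-orbit inside $v_1^H\times v_2^{\langle b\rangle}$ has length $[m_1,m_2]$, and comparing with the total size $m_1m_2$ gives exactly $m_1m_2/[m_1,m_2]=(m_1,m_2)$ orbits. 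I expect the only genuinely delicate point to be the coordinate separation of the first paragraph, namely verifying that the projection of $(bx)^k$ to $V$ is exactly $b^k$ while the $U$-coordinate evolves as $v_1^{(bx)^k}$; everything after that is formal bookkeeping with $\gcd$ and $\operatorname{lcm}$.
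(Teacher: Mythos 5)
Your proposal is correct and follows essentially the same route as the paper: the key identity $(v_1v_2)^{(bx)^k}=v_1^{(bx)^k}v_2^{b^k}$ (using $x\in U$ and the $b$-invariance of the Hall factors) gives both the containment and the $H$-invariance, and the orbit count then follows from the length computation $[m_1,m_2]$ together with $|v_1^H\times v_2^{\langle b\rangle}|=m_1m_2$. Your stabilizer argument for the common orbit length is a slightly more explicit version of the paper's observation that $bx$ acts as a product of two disjoint cycles of lengths $m_1$ and $m_2$, but this is a cosmetic difference only.
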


\begin{proof} Since $$(v_1v_2)^{(bx)^k}=(v_1v_2)^{b^k}x^{\sigma(b, k)}=v_1^{b^k}x^{\sigma(b, k)}v_2^{b^k}=v_1^{(bx)^k}v_2^{b^k}\in v_1^H\times v_2^{\langle b\rangle},$$ $v_1^H\times v_2^{\langle b\rangle}$ is $H$-invariant. We have $v_1^H=v_1^{\langle \hat b x \rangle}$, where $\hat b$ is the restriction $b|_{U}$ of $b$ on $U$, and $v_2^{\la b\ra}=v_2^{\langle b|_{V}\rangle}$. Both $\hat b x$ and $b|_{V}$ act as cycles on $v_1^H$ and $v_2^{\langle b\rangle}$ respectively, and $bx$ is their product. Therefore, the length of $(v_1v_2)^H$ is the least common multiple of $m_1$ and $m_2$. Since the cardinality of $v_1^H\times v_2^{\langle b\rangle}$ equals $m_1m_2$ and all $H$-orbits lying in $v_1^H\times v_2^{\langle b\rangle}$ have equal lengths, there are $m_1m_2/[m_1, m_2]=(m_1, m_2)$ of them.
\end{proof}

\section{Radicals}\label{s:rad}

Let $H\leqslant G$ and $\Omega\in\orb(H, W)$. The \emph{radicals} of $\Omega$ and $H$  respectively are $$\rad(\Omega)=\{w\in W|\, \Omega^w=\Omega\} \text{ and } \rad(H)=\bigcap\limits_{\Omega\in\orb(H, W)}\rad(\Omega).$$  Thus,  $\rad(\Omega)$ is the largest subgroup of $W$ such that $\Omega$ is a union of cosets of $\rad(\Omega)$ in $W$, and $\rad(H)$ is the largest subgroup of $W$ such that every orbits of $H$ is a union of cosets of $\rad(H)$.

If $N$ is a subgroup of $W$, then it is characteristic in $W$. So $G$ acts on $W/N$.  Denote by $K_N$ the kernel of this action.

\begin{lemm}\label{l:QuotientByRadical} Let $H$ and $N$ be subgroups of $G$ and $W$ respectively. Then
\begin{itemize}
\item[$(1)$] $K_N=N\cdot C_A(W/N)$;
\item[$(2)$] if $H\cap W\leqslant N$, then $H$ acts on $W/N$ as a cyclic group;
\item[$(3)$] if $N\leqslant\rad(H)$, then the natural epimorphism $W\mapsto W/N$ induces a bijection between $\orb(H,W)$ and $\orb(H, W/N)$.
\end{itemize}
\end{lemm}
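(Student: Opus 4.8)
The plan is to handle the three parts in order, since (2) and (3) naturally lean on the setup established for (1).

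For part (1), the idea is to unwind the definition of $K_N$ as the kernel of the induced action of $G$ on $W/N$. Since $W$ is a regular normal subgroup, I would first observe that $N$ acts trivially on $W/N$ (elements of $N$ act by right multiplication, which is translation by a coset representative lying in $N$, hence trivial on cosets of $N$). So $N\leqslant K_N$. Conversely, an element $g\in G$ can be written as $g=a^i u$ with $a^i\in A$ and $u\in W$; its action on $W/N$ decomposes into the translation part coming from $u$ and the automorphism part coming from $a^i$. The translation part is trivial on $W/N$ exactly when $u\in N$, and the automorphism part is trivial exactly when $a^i$ centralizes $W/N$, i.e.\ $a^i\in C_A(W/N)$. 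Combining these gives $K_N=N\cdot C_A(W/N)$. The one point requiring care is that the two contributions interact correctly, so I would verify that $g$ fixes every coset iff both its translation and automorphism components do; this follows because $W/N$ is abelian and the two parts act on independent "coordinates" of the affine action.

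For part (2), with $H\cap W\leqslant N$, the image $\overline H$ of $H$ in the induced action on $W/N$ kills the whole of $H\cap W$ (which lies in $N\leqslant K_N$ by part (1)). Hence $\overline H\cong H/(H\cap K_N)$ is a quotient of $H/(H\cap W)$. Since $H/(H\cap W)\cong HW/W$ is a section of the cyclic group $G/W\cong A$, it is cyclic, and therefore so is its quotient $\overline H$. This is essentially immediate once part (1) places $H\cap W$ inside the kernel.

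For part (3), assume $N\leqslant\rad(H)$. The natural epimorphism $W\to W/N$ is $H$-equivariant, so it sends each $H$-orbit in $W$ onto an $H$-orbit in $W/N$, giving a well-defined surjection $\orb(H,W)\to\orb(H,W/N)$. Surjectivity is clear since the map $W\to W/N$ is onto. The crux is injectivity: I must show distinct $H$-orbits in $W$ map to distinct orbits in $W/N$, equivalently that each $H$-orbit in $W/N$ is the image of a \emph{single} $H$-orbit in $W$. This is exactly where $N\leqslant\rad(H)$ enters: by the definition of $\rad(H)$, every orbit $\Omega$ of $H$ is a union of cosets of $\rad(H)$, hence of $N$. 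Thus the preimage in $W$ of any single orbit-image $\overline\Omega$ is precisely $\Omega$ itself (no "merging" of distinct orbits occurs under the quotient, since each orbit is already saturated with respect to $N$). This gives a two-sided inverse and hence a bijection. I expect injectivity here to be the main obstacle, since it is the only step that genuinely uses the radical hypothesis rather than formal properties of quotient actions; the key observation to make precise is that $\Omega$ being a union of $N$-cosets means $\Omega$ equals the full preimage of $\overline\Omega$.
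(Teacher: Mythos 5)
Your proposal is correct and follows essentially the same route as the paper: for $(1)$ the paper likewise writes an element of $K_N$ as $cu$ with $c\in A$, $u\in W$, evaluates its action on the trivial coset $N$ to force $u\in N$, and then concludes $c\in C_A(W/N)$; the paper dismisses $(2)$ and $(3)$ as obvious, and your arguments for them (cyclicity of $\overline H$ as a quotient of a section of $A$, and the $N$-coset saturation of orbits giving injectivity) are exactly the intended routine verifications.
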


\begin{proof} Let $u\in W$ and $c\in A$ be such that $cu$ is an element of $K_N$. Then, in particular, $N^{cu}=N$. Since $N^c=N$, we have $u\in N$. Therefore, $(Nv)^{cu}=Nv$ yields $(Nv)^c=Nv$ for every $v\in W$. Thus, $c\in C_A(W/N)$ and $K_N\leqslant N\cdot C_A(W/N)$. The converse inclusion is clear. Parts $(2)$ and $(3)$ are obvious.
\end{proof}

\begin{lemm}\label{l:QuotientBijection} Let $N$ be a subgroup of $W$. Then the natural epimorphism $G\rightarrow G/K_N$ induces a bijection between relatively closed subgroups of $G$, containing $K_N$, and relatively closed subgroups of $G/K_N$.
\end{lemm}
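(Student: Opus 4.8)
The plan is to show that the lattice correspondence induced by $\pi\colon G\to G/K_N$ restricts to the relatively closed subgroups. Since $K_N$ is the kernel of the action of $G$ on $W/N$, it is normal in $G$, so by the correspondence theorem $\pi$ induces a bijection $H\mapsto H/K_N$ between the subgroups of $G$ containing $K_N$ and the subgroups of $G/K_N$. It therefore suffices to prove that, for $H\geqslant K_N$, the subgroup $H$ is relatively closed in $G$ if and only if $H/K_N$ is relatively closed in $G/K_N$. The engine of the whole argument is a dictionary between orbits on $W$ and orbits on $W/N$ that is \emph{uniform}, i.e.\ does not depend on the particular subgroup.

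First I would establish this dictionary. If $K_N\leqslant H$, then in particular $N\leqslant H$, so every orbit of $H$ on $W$ is $N$-invariant; equivalently $N\leqslant\rad(H)$. Lemma~\ref{l:QuotientByRadical}$(3)$ then says that the projection $W\to W/N$ induces a bijection between $\orb(H,W)$ and $\orb(H,W/N)$, and the latter equals $\orb(H/K_N,W/N)$ because $K_N$ acts trivially on $W/N$. The key point is that this bijection is realised by a single map independent of $H$: taking images (resp.\ preimages) is a fixed bijection between the $N$-invariant subsets of $W$ and the subsets of $W/N$, and the orbits of any subgroup containing $K_N$ are $N$-invariant. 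Consequently, for subgroups $H_1,H_2\geqslant K_N$ one has $\orb(H_1,W)=\orb(H_2,W)$ if and only if $\orb(H_1/K_N,W/N)=\orb(H_2/K_N,W/N)$.

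With the dictionary in hand I would match the relative closures directly, using the characterisation of the relative closure as the largest subgroup realising a given orbit partition. Fix $H\geqslant K_N$, let $\tilde H$ be the relative closure of $H$ in $G$ and let $\overline M$ be the relative closure of $H/K_N$ in $G/K_N$; note $\tilde H\geqslant H\geqslant K_N$. Since $\orb(\tilde H,W)=\orb(H,W)$, the uniform dictionary gives $\orb(\tilde H/K_N,W/N)=\orb(H/K_N,W/N)$, whence $\tilde H/K_N\leqslant\overline M$ by maximality of $\overline M$. Conversely, the preimage $M=\pi^{-1}(\overline M)$ contains $K_N$ and satisfies $\orb(M/K_N,W/N)=\orb(H/K_N,W/N)$, so the dictionary yields $\orb(M,W)=\orb(H,W)$ and hence $M\leqslant\tilde H$ by maximality of $\tilde H$; applying $\pi$ gives $\overline M\leqslant\tilde H/K_N$. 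Thus $\tilde H/K_N=\overline M$, and since $H=\tilde H$ is equivalent to $H/K_N=\tilde H/K_N$ for subgroups containing $K_N$, we conclude that $H$ is relatively closed in $G$ exactly when $H/K_N$ is relatively closed in $G/K_N$. Combined with the correspondence theorem this is the asserted bijection.

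The main obstacle I anticipate is not any single computation but the bookkeeping needed to make the orbit dictionary genuinely uniform: one must check that $N\leqslant\rad(H)$ for every $H\geqslant K_N$ and, more importantly, that the passage between $\orb(H,W)$ and $\orb(H/K_N,W/N)$ is effected by the same projection/preimage map for all such $H$, so that equality of orbit partitions transfers faithfully in both directions. Once this uniformity is secured, the two maximality inclusions are routine, and the fact that every relative closure automatically contains $K_N$ (being a supergroup of $H$) keeps everything inside the range of the correspondence theorem.
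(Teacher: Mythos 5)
Your proposal is correct and follows essentially the same route as the paper: both arguments rest on the correspondence theorem for subgroups containing $K_N$ together with Lemma~\ref{l:QuotientByRadical}$(3)$, which (since $N\leqslant H$ forces $N\leqslant\rad(H)$) identifies $\orb(H,W)$ with $\orb(H/K_N,W/N)$ via a projection/preimage dictionary independent of $H$, after which maximality transfers in both directions. Your write-up merely makes explicit the "straightforward" converse and the matching of relative closures that the paper leaves to the reader.
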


\begin{proof} Let $H$ be a relatively closed subgroup of $G$, containing $K_N$. Then $N\leqslant H$ and so $N\leqslant\rad(H)$. Lemma~\ref{l:QuotientByRadical}$(3)$  implies that the orbits of $H$ on $W$ are the full preimages of the orbits of $H$ on $W/N$. Hence $H/K_N$ is the largest subgroup of $G/K_N$, having the given set of orbits, that is, $H/K_N$ is relatively closed. The converse correspondence is straightforward.
\end{proof}

\noindent\textbf{Remark.} Lemma~\ref{l:QuotientBijection} allows to study the relatively closed subgroups  by replacing $G$ acting on $W$ with $G/K_N$ acting on~$W/N$. It is possible because the group $G/K_N$ is again a group with a cyclic regular normal subgroup $W/N$ and a cyclic point stabilizer $A/C_A(W/N)$, see Lemma~\ref{l:QuotientByRadical}. In fact, it justifies the consideration of this wider class of permutation groups instead of subgroups of $\GaL_1(q)$. Indeed, the quotient $G/K_N$ of a one-dimensional semilinear group $G\leqslant\GaL_1(q)$ does not necessarily have a representation of the form $G/K_N\leqslant\GaL_1(u)$. As mentioned in Introduction, we adopted this helpful idea from~\cite{M}.

\smallskip

Recall that $\ord_m(n)$ is the multiplicative order of $n$ modulo~$m$ for coprime $m$ and~$n$.

\begin{lemm}\label{l:Max&MinOrbitsLengths} Let $H=\la bx, y\ra$ be in the normal form. Let $\tilde{\phantom g}$ is the natural homomorphism from $W$ onto $W/\rad(H)$. Let $m$ and $M$ be the minimal and maximal lengths of orbits of $H$. Put $t=|\tilde x|$, if $b$ is a positive automorphism of $\tilde W$ or $\tilde x$ is odd, and $t=2|x^{\beta+1}|$, otherwise. Then the length of every orbit of $H$ divides $M$ and is divisible by $m$. Moreover, $m=t\cdot|\rad(H)|$ and $$M=\left[st\cdot|[b^s,\tilde U_2]|, \ord_{|\tilde V|}(\beta)\right]\cdot|\rad(H)|,$$ where $s=1$, if $b$ is a positive automorphism of $\tilde W$  or $\tilde x$ is even, and $s=2$ otherwise.
\end{lemm}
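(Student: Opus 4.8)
The plan is to transfer the whole computation to the quotient $\tilde W=W/\rad(H)$, on which $H$ acts as a cyclic group, and then read off the orbit lengths from Section~\ref{s:cyclic}. First I would note that $H\cap W\leqslant\rad(H)$: the generator $y$ of $H\cap W$ acts on $W$ as the translation $v\mapsto vy$, which maps each $H$-orbit onto itself, so $\langle y\rangle\leqslant\rad(\Omega)$ for every $\Omega\in\orb(H,W)$ and hence $\langle y\rangle\leqslant\rad(H)$. By Lemma~\ref{l:QuotientByRadical}(2), $H$ acts on $\tilde W$ as a cyclic group, which is precisely $\langle b\tilde x\rangle$ since $y$ maps into $\rad(H)$. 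Taking $N=\rad(H)$ in Lemma~\ref{l:QuotientByRadical}(3), every orbit of $H$ on $W$ is the full preimage of an orbit of $\langle b\tilde x\rangle$ on $\tilde W$, so each orbit length on $W$ equals $|\rad(H)|$ times the corresponding length on $\tilde W$. This reduces the divisibility assertion to Lemma~\ref{l:OrbitsOfCyclicGroup} applied to $\langle b\tilde x\rangle$, and reduces the two displayed formulas to $m=\tilde m\,|\rad(H)|$ and $M=\tilde M\,|\rad(H)|$, where $\tilde m$ and $\tilde M$ denote the least and greatest orbit lengths of $\langle b\tilde x\rangle$ on $\tilde W$.

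To apply Propositions~\ref{p:OrbitsOfUnipotentAutomorphims} and~\ref{p:GeneralOrbitOfCyclicGroup} with the representative $\tilde x$, I must first check their hypothesis in $\tilde W$, namely $|\tilde x|_r>|[b,\tilde W]|_r$ for all $r\in\pi(\tilde x)$, together with the analogue $|\tilde x^{\beta+1}|_r>|[b^2,\tilde W]|_r$ for $r\in\pi(\tilde x^{\beta+1})$ in the negative case. I expect this preservation under the quotient to be the main obstacle. Fix $r\in\pi(x)$; inside the cyclic Sylow $r$-subgroup of $W$ the Sylow $r$-subgroups of $\langle x\rangle$, of $[b,W]$ and of $\rad(H)$ form a chain, and the normal form yields that the first strictly contains the second. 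If the Sylow $r$-subgroup of $\langle x\rangle$ lies in that of $\rad(H)$, then so does that of $[b,W]$, whence $r\notin\pi(\tilde x)$ and the $r$-part of $[b,\tilde W]$ is trivial; otherwise the chain forces $|\tilde x|_r>|[b,\tilde W]|_r$. For the negative analogue one first verifies the inequality in $W$: for odd $r\in\pi(x)$ one has $r\mid\beta-1$ and $r\nmid\beta+1$ (because $x\in U$), so $(\beta^2-1)_r=(\beta-1)_r$ and the bound reduces to the normal form one; for $r=2$ one uses $|x|_2=|W|_2$ (forced by the normal form, as $b$ negative gives $(\beta-1)_2=2$) together with $(\beta^2-1)_2=(\beta-1)_2(\beta+1)_2$. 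The same chain argument then transports these inequalities to $\tilde W$.

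With the hypothesis secured I would decompose $\tilde W=\tilde U\times\tilde V$, where $\tilde U_1,\tilde U_2,\tilde V$ are the images of $U_1,U_2,V$. By Proposition~\ref{p:GeneralOrbitOfCyclicGroup}, the orbit of $\tilde v=\tilde v_1\tilde v_2$ with $\tilde v_1\in\tilde U$, $\tilde v_2\in\tilde V$ has length $[m_1,m_2]$, where $m_1=|\tilde v_1^{\langle b\tilde x\rangle}|$ and $m_2=|\tilde v_2^{\langle b\rangle}|$. Proposition~\ref{p:OrbitsOfUnipotentAutomorphims} gives $m_1=|\tilde x\,\tilde v_1^{b-1}|$ when $b$ is positive on $\tilde W$ ($s=1$) and $m_1=2\,|\tilde x^{\beta+1}\tilde v_1^{b^2-1}|$ when $b$ is negative ($s=2$). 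As $\tilde v_1$ runs over $\tilde U$, the factor $\tilde v_1^{b^s-1}$ runs over $[b^s,\tilde U]=[b^s,\tilde U_1]\times[b^s,\tilde U_2]$ by Lemma~\ref{l:CommutatorOfAutomorphism}(1) applied to $b^s$; by the previous paragraph the $\tilde U_1$-component of $\tilde v_1^{b^s-1}$ does not affect the $\pi(x)$-part of the order, so $m_1=t\cdot e$, where $t$ is the constant value $|\tilde x|$, respectively $2\,|\tilde x^{\beta+1}|$, and $e$ is the order of the $\tilde U_2$-component of $\tilde v_1^{b^s-1}$. As $\tilde v_1$ varies, $e$ runs over all divisors of $|[b^s,\tilde U_2]|$, so $m_1$ attains the value $t$ and always divides $t\,|[b^s,\tilde U_2]|$. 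Since $[b,\tilde V]=\tilde V$, the length $m_2=|\tilde v_2^{\langle b\rangle}|$ equals $\ord_{|\tilde v_2|}(\beta)$, which ranges from $1$ to $\ord_{|\tilde V|}(\beta)$.

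Finally I would combine these through Proposition~\ref{p:GeneralOrbitOfCyclicGroup}. Since every orbit length is $[m_1,m_2]\geqslant m_1\geqslant t$, the minimum $\tilde m=t$ is attained at $\tilde v=1$; and since every $m_1$ divides $t\,|[b^s,\tilde U_2]|$ and every $m_2$ divides $\ord_{|\tilde V|}(\beta)$, every orbit length divides $[\,t\,|[b^s,\tilde U_2]|,\ \ord_{|\tilde V|}(\beta)\,]$, a value realised by choosing $\tilde v_1$ with $e=|[b^s,\tilde U_2]|$ and $\tilde v_2$ a generator of $\tilde V$. Thus $\tilde M=[\,t\,|[b^s,\tilde U_2]|,\ \ord_{|\tilde V|}(\beta)\,]$, and multiplying $\tilde m$ and $\tilde M$ by $|\rad(H)|$ gives the asserted formulas for $m$ and $M$.
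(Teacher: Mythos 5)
Your proof is correct and follows exactly the route the paper's own (two-sentence) proof indicates: pass to $W/\rad(H)$ via Lemma~\ref{l:QuotientByRadical}, get the divisibility claim from Lemma~\ref{l:OrbitsOfCyclicGroup}, and extract the formulas for $m$ and $M$ from Propositions~\ref{p:OrbitsOfUnipotentAutomorphims} and~\ref{p:GeneralOrbitOfCyclicGroup}; your check that the normal-form inequalities descend to $\tilde W$ is a detail the paper leaves implicit. Note that your computation produces $t=2|\tilde x^{\beta+1}|$ in the negative case, and this is indeed what the identity $m=t\cdot|\rad(H)|$ requires --- the statement's $2|x^{\beta+1}|$ is missing a tilde (for $W=\la w\ra$ of order $16$ and $H=\la bw\ra$ with $w^b=w^3$ one has $\rad(H)=\la w^4\ra$ and minimal orbit length $8=2\cdot|\rad(H)|$, whereas $2|x^{\beta+1}|\cdot|\rad(H)|=32$).
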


\begin{proof} Since $H\cap W\leqslant\rad(H)$, it follows from Lemma~\ref{l:QuotientByRadical}$(2)$ that $H$ acts on $W/\rad(H)$ as a cyclic group. So the lemma follows from Lemma~\ref{l:QuotientByRadical}$(3)$  and Lemma~\ref{l:OrbitsOfCyclicGroup}. The expressions for $m$ and $M$ directly follow from  Lemma~\ref{l:QuotientByRadical}$(2)$, $(3)$ and Propositions \ref{p:OrbitsOfUnipotentAutomorphims} and~\ref{p:GeneralOrbitOfCyclicGroup}.
 \end{proof}

Let us describe the radicals of cyclic subgroups of~$G$. In the following proposition we use the parameter $l$, which appears frequently in the remaining part of the paper. In general case, it is defined as the order of $b$ in the induced action on $\overline V=V(H\cap W)/(H\cap W)$. If $H$ is cyclic and $|x|_r>|[b, W]|_r$ for every $r\in\pi(x)$, then $\langle x\rangle\cap V=1$. Since $H\cap W\leqslant\la x\ra$, the groups $\overline V$ and $V$ are isomorphic. So $l$ can be equivalently defined as the order of the restriction $b|_V$, or as the order of $\beta$ modulo $|V|$.

\begin{prop}\label{p:RadicalOfCyclicGroup} Let $H=\langle bx\rangle$ and $|x|_r>|[b, W]|_r$ for every $r\in\pi(x)$. Then $\rad(H)=\langle x^{\sigma(\beta, 2l)}\rangle$, if $b$ is a negative automorphism of $W$ and $l$ is odd, or $\langle x^{\sigma(\beta, {l})}\rangle$, otherwise.
\end{prop}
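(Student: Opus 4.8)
The plan is to compute $\rad(H)$ directly from an orbit--membership description of the radical, push the computation onto the Hall subgroup $U$, and there invoke the orbit formula of Proposition~\ref{p:OrbitsOfUnipotentAutomorphims}. I record first the working criterion: for $u\in W$ one has $u\in\rad(H)$ if and only if $vu\in v^H$ for every $v\in W$, since right translation by such a $u$ sends each $H$-orbit into, hence onto, itself. Because $v^{(bx)^k}=v^{b^k}x^{\sigma(b,k)}$ and $x\in U\leqslant\la x\ra$, the orbit of the identity is $1^H=\{x^{\sigma(b,k)}\mid k\}\subseteq\la x\ra$, and as $1\in 1^H$ this already forces $\rad(H)\leqslant\la x\ra\leqslant U$.

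Next I would decouple the $U$- and $V$-parts. Write $v=v_1v_2$ with $v_1\in U$, $v_2\in V$, and fix $u\in\la x\ra$. As $b$ acts on $U$ and $V$ separately and $b^l$ is trivial on $V$, choosing $v_2$ to generate $V$ forces any exponent $k$ with $vu\in v^H$ to be divisible by $l$, while conversely every $k$ divisible by $l$ fixes all of $V$. Hence $u\in\rad(H)$ iff for each $v_1\in U$ there is $j$ with $u=v_1^{\beta^{lj}-1}x^{\sigma(\beta,lj)}$. Using $\sigma(\beta,lj)=\sigma(\beta,l)\,\sigma(\beta^{l},j)$ and setting $c=b^{l}$, $z=x^{\sigma(\beta,l)}$, the set on the right is exactly $v_1^{-1}\bigl(v_1^{\la cz\ra}\bigr)$, where $cz$ is the permutation $t\mapsto t^{c}z$ of $U$. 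Therefore
$$\rad(H)=\la x\ra\cap\rad\bigl(\la b^{l}x^{\sigma(\beta,l)}\ra\bigr),$$
the radical on the right being computed for the action of $\la cz\ra$ on $U$.

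Now I would apply Proposition~\ref{p:OrbitsOfUnipotentAutomorphims} to $\la cz\ra$ on $U$. Since $\beta\equiv1$ modulo every prime dividing $|U|$, so does $c=\beta^{l}$; thus $c$ is unipotent on $U$ and the proposition applies with $z$ in the role of $x$. A short check shows $c$ is a \emph{positive} automorphism of $U$ precisely when we are \emph{not} in the case ``$b$ negative and $l$ odd''. In the positive case the orbit of $v_1$ is $\la zv_1^{c-1}\ra v_1$, with radical $\la zv_1^{c-1}\ra$; intersecting over $v_1\in U$ yields $\la z\ra=\la x^{\sigma(\beta,l)}\ra$. In the negative case $c^{2}$ is positive, the orbit of $1$ is $\la z^{c+1}\ra\cup\la z^{c+1}\ra z$ with the two cosets distinct and not interchangeable, so already its radical is $\la z^{c+1}\ra=\la x^{\sigma(\beta,l)(\beta^{l}+1)}\ra=\la x^{\sigma(\beta,2l)}\ra$, using $\sigma(\beta,2l)=\sigma(\beta,l)(1+\beta^{l})$; this gives the upper bound, the lower bound coming as in the positive case.

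The one nonformal point, and the main obstacle, is that each intersection collapses to a single cyclic group: every element of the coset $z[c,U]$ (respectively $z^{c+1}[c^{2},U]$) generates the same subgroup $\la z\ra$ (respectively $\la z^{c+1}\ra$), which Lemma~\ref{l:CosetInCyclicGroup} reduces to the order estimate $|z|_r>|[c,U]|_r$ for every $r\in\pi(z)$ (and its analogue for $c^{2}$). I would verify this prime by prime from the standing hypothesis $|x|_r>|[b,W]|_r$: for odd $r$, Lemma~\ref{l:a^n-1/a-1} gives $(\sigma(\beta,l))_r=l_r$ and $(\beta^{l}-1)_r=(\beta-1)_r l_r$, so that $|z|_r=|x|_r/l_r$ and the estimate reduces to $|x|_r>|[b,W]|_r$. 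The prime $2$ is where the dichotomy genuinely enters, through the exceptional branch of Lemma~\ref{l:a^n-1/a-1}: it is exactly the fact that $(\sigma(\beta,l))_2=1$ when $b$ is negative and $l$ is odd (so $z^{2}\notin\la z^{c+1}\ra$, whence the two cosets above cannot be swapped) that forces the passage from $\sigma(\beta,l)$ to $\sigma(\beta,2l)$.
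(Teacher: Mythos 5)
Your argument is correct and follows essentially the same route as the paper's proof: both reduce the computation to the cyclic group $\la (bx)^l\ra=\la b^lx^{\sigma(\beta,l)}\ra$ acting on $U$ (you package the paper's two inclusions into the single identity $\rad(H)=\la x\ra\cap\rad(\la b^lx^{\sigma(\beta,l)}\ra)$ by evaluating the membership criterion at a generator of $V$, which is a tidier version of the paper's observation that the translates $\Omega^{(bx)^i}$ lie in distinct cosets of $U$), and then both invoke Proposition~\ref{p:OrbitsOfUnipotentAutomorphims} together with the same verification of the order hypothesis and the same $2$-adic endgame. Two small touch-ups: elements of the coset $z[c,U]$ generate subgroups \emph{containing} $\la z\ra$ rather than equal to it (which is all the intersection argument needs), and the non-interchangeability of the two cosets in the negative case requires not only $(\sigma(\beta,l))_2=1$ but also $|x|_2>2$, which the standing hypothesis $|x|_2>|[b,W]|_2$ supplies exactly as at the end of the paper's proof.
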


\begin{proof} It follows from Proposition~\ref{p:OrbitsOfUnipotentAutomorphims} that the intersection of the radicals of the orbits of elements $v$ for $v\in U$ is equal to $\langle x\rangle$, if $b$ is positive, and $\langle x^{\beta+1}\rangle$, otherwise. Therefore, the radical $\rad(H)$ is contained in this intersection, in particular, it is contained in $U$. Let $L$ denote $\la (bx)^l\ra$. Observe that $L=\la b^lx^{\sigma(\beta, l)}\ra$.  Since $b^l$ is the identity automorphism of $V$, it follows from Proposition~\ref{p:GeneralOrbitOfCyclicGroup} that $$\orb(L, W)=\orb(L, U)\times V=\{\Omega v\,\vline\,\Omega\in\orb(L, U), v\in V\}.$$

We claim that $|x^{\sigma(\beta, l)}|_r>|[b^l, W]|_r$ for every $r\in\pi(x^{\sigma(\beta, l)})$, that is, $L$ satisfies the hypotheses of Propositions~\ref{p:OrbitsOfUnipotentAutomorphims} and~\ref{p:GeneralOrbitOfCyclicGroup}.   Since $[b, W]=\la w^{\beta-1}\ra$ and $[b^l, W]=\la w^{\beta^l-1}\ra$, we have $$|x^{\sigma(\beta, l)}|_r=\frac{|x|_r}{(|x|, \sigma(\beta, l))_r}\geqslant|[b^l, W]|_r=\frac{|[b, W]|_r}{(|[b,W]|, \sigma(\beta, l))_r}$$ for every $r\in\pi(x)$, and the equality is  possible only if both sides are equal to $1$, that is, if $r\not\in\pi(x^{\sigma(\beta, l)})$. Thus, the claim is proved.

Now Proposition~\ref{p:OrbitsOfUnipotentAutomorphims} implies that $\rad(L)=\langle x^{\sigma(\beta, l)}\rangle$, if $b^l$ is positive, or $\langle x^{\sigma(\beta, {2l})}\rangle=\langle (x^{\sigma(\beta, {l})})^{\beta^{l}+1}\rangle$, otherwise. Note that $b^l$ is negative if and only if $b$ is negative and $l$ is odd. Since $L$ is a subgroup of $H$, the radical of $L$ is a subgroup of $\rad(H)$.

Let us prove that $\rad(H)$ coincides with $\rad(L)$. Let $v$ be a generator of $V$ and put $\Omega=v^L$. Then $$v^H=\Omega\cup\Omega^{bx}\cup\dots\cup\Omega^{(bx)^{l-1}}.$$  Propositions~\ref{p:OrbitsOfUnipotentAutomorphims} and~\ref{p:GeneralOrbitOfCyclicGroup} imply that $\Omega=v^L=\rad(L)v$, if $b^l$ is positive, and $\rad(L)v\cup \rad(L)x^{\sigma(\beta, l)}v$, otherwise. Since $\Omega\subseteq Uv$, we have $$\Omega^{(bx)^i}\subseteq Uv^{b^i}x^{\sigma(b, i)}= Uv^{b^i}.$$ In particular, the sets $\Omega^{(bx)^i}$ for distinct $i\in\{0,\dots, l-1\}$ lie in distinct cosets of $U$. Recall that $\rad(H)$ is a subgroup of $U$.
Hence $$\rad(H)v\subseteq v^H\cap Uv=\Omega=\rad(L)v \text{ or } \rad(L)v\cup \rad(L)x^{\sigma(\beta,l)}v.$$ Therefore, $\rad(H)$ can be larger than $\rad(L)$ only if $b$ is negative, $l$ is odd and $$\rad(L)\cup \rad(L)x^{\sigma(\beta,l)}=\langle x^{\sigma(\beta,2l)}\rangle\cup\langle x^{\sigma(\beta,2l)}\rangle x^{\sigma(\beta,l)}$$ is a subgroup of $W$.

If the order of $x$ is odd, then $\langle x^{\sigma(\beta,2l)}\rangle=\langle x^{\sigma(\beta, l)}\rangle=\rad(L)$. So $\rad(H)=\rad(L)$ and the proposition holds in this case.

If $|x|$ is even, then $L\cup Lx^{\sigma(\beta,l)}$ is a subgroup if and only if $x^{2\sigma(\beta,l)}\in\la x^{\sigma(\beta,2l)}\ra$. We have $x^{\sigma(\beta,2l)}=x^{\sigma(\beta,l)(\beta^l+1)}$. Since $\sigma(\beta,l)$ is odd and the only common prime divisor of $|x|$ and $\beta^l+1$ is $2$, it is only possible if $|x|=2$, because otherwise the $2$-part of $|x^{2\sigma(\beta,l)}|$  is strictly larger than the $2$-part of $|x^{\sigma(\beta,2l)}|$. The order of $x$ cannot be equal to $2$ while $b$ is negative, due to condition $|x|_2>|[b, W]|_2$. Therefore, the radical $\rad(H)$ is equal to $\rad(L)=\langle x^{\sigma(\beta,2l)}\rangle$.
\end{proof}

From now on we will need the following notation. If $H=\la bx, y \ra$ is a subgroup of $G$, then $\overline{\phantom{g}}$ denotes the natural homomorphism from $W$ to $W/(H\cap W)$ as well as the corresponding homomorphism from $A$ to $\aut(\overline W)$. Also $l$ denotes the order of $\beta$ modulo $|\overline V|$, that is, the order of the restriction of $\overline b$ on $\overline V$.

\begin{theo}\label{t:GeneralRadical} Let $G$ be a permutation group with a cyclic regular normal subgroup $W$ and a cyclic point stabilizer $A$. Let $H=\langle bx, y\rangle$ be a subgroup of $G$ in the normal form. Then $\rad(H)=\langle x^{\sigma(\beta, 2l)}, y\rangle$, if $b$ is a negative automorphism of $\overline W$ and $l$ is odd, or $\langle x^{\sigma(\beta, {l})}, y\rangle$, otherwise.
\end{theo}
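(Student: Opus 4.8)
The plan is to reduce the statement to the cyclic case already settled in Proposition~\ref{p:RadicalOfCyclicGroup} by passing to the quotient $\overline W=W/(H\cap W)$. First, since $H\cap W=\la y\ra$ lies in $H$ and $W$ is abelian, every orbit of $H$ is a union of cosets of $\la y\ra$, so $H\cap W\leqslant\rad(H)$. Taking $N=H\cap W$ in Lemma~\ref{l:QuotientByRadical}, part~$(2)$ shows that $H$ acts on $\overline W$ as a cyclic group, and since $\overline y=1$ this action is exactly that of $\la\overline{bx}\ra=\la\overline b\,\overline x\ra$; part~$(3)$ shows that the epimorphism $\pi\colon W\to\overline W$ carries $\orb(H,W)$ bijectively onto $\orb(\la\overline{bx}\ra,\overline W)$, the orbits on $W$ being the full $\pi$-preimages of those on $\overline W$.

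Next I would record that radicals pull back along $\pi$. Because every orbit $\Omega$ on $W$ equals $\pi^{-1}(\overline\Omega)$ for the corresponding orbit $\overline\Omega$ on $\overline W$, and right translation by $w$ sends $\pi^{-1}(\overline\Omega)$ to $\pi^{-1}(\overline\Omega\,\pi(w))$, one gets $w\in\rad(\Omega)$ iff $\pi(w)\in\rad(\overline\Omega)$, that is, $\rad(\Omega)=\pi^{-1}(\rad(\overline\Omega))$. Intersecting over all orbits yields $\rad(H)=\pi^{-1}(\rad(\la\overline{bx}\ra))$. Thus it suffices to compute the radical of the cyclic group $\la\overline{bx}\ra$ acting on $\overline W$ and then take preimages, noting that the preimage of $\la\overline x^{\,\sigma(\beta,k)}\ra$ is precisely $\la x^{\sigma(\beta,k)},y\ra$ since $\ker\pi=\la y\ra$.

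To apply Proposition~\ref{p:RadicalOfCyclicGroup} on $\overline W$, I must verify its hypothesis $|\overline x|_r>|[\overline b,\overline W]|_r$ for every $r\in\pi(\overline x)$; this is the main obstacle. For such $r$ we have $|x|_r>|y|_r$, whence $|\overline x|_r=|x|_r/|y|_r$; and since $[\overline b,\overline W]$ is the image of $[b,W]=\la w^{\beta-1}\ra$, its $r$-part equals $\max\{1,|[b,W]|_r/|y|_r\}$. The inequality $|x|_r/|y|_r>1$ is immediate, while $|x|_r/|y|_r>|[b,W]|_r/|y|_r$ is equivalent to $|x|_r>|[b,W]|_r$; both follow from the normal form assumption $|x|_r>\max\{|y|_r,|[b,W]|_r\}$. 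Hence the normal-form inequalities survive passage to the quotient, and the hypothesis holds.

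With the hypothesis verified, Proposition~\ref{p:RadicalOfCyclicGroup} gives $\rad(\la\overline{bx}\ra)=\la\overline x^{\,\sigma(\beta,2l)}\ra$ when $\overline b$ is a negative automorphism of $\overline W$ and $l$ is odd, and $\la\overline x^{\,\sigma(\beta,l)}\ra$ otherwise, where the parameter from the proposition is the order of the restriction of $\overline b$ to the largest subgroup $\overline V$ of $\overline W$ with $[\overline b,\overline V]=\overline V$, which is exactly $l=\ord_{|\overline V|}(\beta)$ as defined before the theorem. Taking $\pi$-preimages via the last paragraph then produces $\rad(H)=\la x^{\sigma(\beta,2l)},y\ra$, respectively $\la x^{\sigma(\beta,l)},y\ra$, as claimed; everything apart from the hypothesis check reduces to bookkeeping with Lemma~\ref{l:QuotientByRadical}.
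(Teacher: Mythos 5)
Your proposal is correct and follows essentially the same route as the paper: pass to $\overline W=W/(H\cap W)$, where $\overline H$ is cyclic, apply Proposition~\ref{p:RadicalOfCyclicGroup}, and pull the radical back through the quotient. The paper's own proof is just a compressed version of this; your extra work (checking that the normal-form inequalities survive the quotient and that radicals are preimages of radicals) correctly fills in the details the paper leaves implicit.
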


\begin{proof} Since $H$ is in the normal form, the cyclic group $\overline H$ satisfies the hypothesis of Proposition~\ref{p:RadicalOfCyclicGroup}. Therefore, the radical of $\overline H$ equals $\langle \overline x^{\sigma(\beta, l)}\rangle$, if $b$ is a positive automorphism of $\overline W$ or $l$ is even, or $\langle \overline x^{\sigma(\beta, {2l})}\rangle$, otherwise. It remains to note that $\rad(H)$ is the preimage of $\rad(\overline H)$.
\end{proof}

\begin{corl}\label{c:xIsl-element} Let $H=\la bx, y\ra$ be a relatively closed subgroup in the normal form. Then $x$ is a $\pi(2l)$-element, if $b$ is a negative automorphism of $\overline W$ and $l$ is odd, or $\pi(l)$-element, otherwise.
\end{corl}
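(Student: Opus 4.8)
The plan is to combine Theorem~\ref{t:GeneralRadical} with the defining property of relative closure, the link between them being the identity $\rad(H)=H\cap W$. The starting observation is that for \emph{any} subgroup $H=\langle bx,y\rangle$ of $G$ one always has $H\cap W\leqslant\rad(H)$: every orbit $\Omega$ of $H$ is invariant under $H\cap W=\langle y\rangle$, and since $\langle y\rangle$ lies in the regular normal subgroup $W$ and acts by translation, $\Omega$ is a union of cosets of $\langle y\rangle$, whence $\langle y\rangle\leqslant\rad(\Omega)$ for every $\Omega$. Conversely, $\rad(H)\leqslant W$ fixes each orbit of $H$ setwise, so $\langle H,\rad(H)\rangle$ has exactly the same orbits as $H$ and therefore lies in the relative closure of $H$. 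When $H$ is relatively closed this forces $\rad(H)\leqslant H$, hence $\rad(H)\leqslant H\cap W$, and together with the previous inclusion we obtain $\rad(H)=H\cap W=\langle y\rangle$ (this is also part~$(1)$ of Theorem~1).

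Next I would feed this into Theorem~\ref{t:GeneralRadical}. In the generic case it gives $\langle x^{\sigma(\beta,l)},y\rangle=\rad(H)=\langle y\rangle$, so $x^{\sigma(\beta,l)}\in\langle y\rangle$; in the case where $b$ is a negative automorphism of $\ov W$ and $l$ is odd it gives instead $x^{\sigma(\beta,2l)}\in\langle y\rangle$. Writing $k\in\{l,2l\}$ for the relevant exponent and fixing $p\in\pi(x)$, the element $x^{\sigma(\beta,k)}$ then has order dividing $|y|$, so in particular $|x^{\sigma(\beta,k)}|_p\leqslant|y|_p$.

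The arithmetic heart of the argument is to analyse the $p$-part of $x^{\sigma(\beta,k)}$. Since $H$ is in the normal form, $|x|_p>|[b,W]|_p$, which (as recorded before Proposition~\ref{p:OrbitsOfUnipotentAutomorphims}) forces $p\mid\beta-1$ and $x\in U$; thus $\beta\equiv1\pmod p$ and consequently $\sigma(\beta,k)=1+\beta+\dots+\beta^{k-1}\equiv k\pmod p$. If $p$ did not divide $k$, then $p\nmid\sigma(\beta,k)$, so raising to the power $\sigma(\beta,k)$ would not change the $p$-part of the order, i.e.\ $|x^{\sigma(\beta,k)}|_p=|x|_p$. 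But the normal form also gives $|x|_p>|y|_p$, contradicting $|x^{\sigma(\beta,k)}|_p\leqslant|y|_p$. Hence $p\mid k$ for every $p\in\pi(x)$, which is exactly the assertion that $x$ is a $\pi(l)$-element in the generic case and a $\pi(2l)$-element in the negative, $l$-odd case.

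I expect the only genuinely delicate point to be the congruence $\sigma(\beta,k)\equiv k\pmod p$ together with the bookkeeping of $p$-parts: one must verify that $p\mid\beta-1$ (so that all summands of $\sigma(\beta,k)$ collapse to $1$ modulo $p$), and see that this holds \emph{precisely} because the normal-form inequality $|x|_p>|[b,W]|_p$ excludes the primes $p\nmid\beta-1$, for which $|[b,W]|_p$ would already equal the full $p$-part $|W|_p$ and the inequality would be impossible. Everything else is direct substitution into Theorem~\ref{t:GeneralRadical}.
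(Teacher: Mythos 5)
Your argument is correct and follows essentially the same route as the paper: it combines Theorem~\ref{t:GeneralRadical} with the identity $\rad(H)=H\cap W=\langle y\rangle$ (the paper's Lemma~\ref{l:Radical=Intersection}, which you re-derive) to get $x^{\sigma(\beta,k)}\in\langle y\rangle$ for $k\in\{l,2l\}$, and then uses the normal-form inequalities together with $\pi(x)\subseteq\pi(\beta-1)$ to pass from $\sigma(\beta,k)$ to $k$. The paper's proof is just a terser version of the same computation.
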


\begin{proof} Theorem~\ref{t:GeneralRadical} implies that the radical of $H$ is equal to $\langle x^{\sigma(\beta, l)}, y\rangle$ or $\langle x^{\sigma(\beta, 2l)}, y\rangle$ in the corresponding cases. If $H$ is relatively closed, then $\rad(H)=H\cap W=\la y\ra$. Hence $x^{\sigma(\beta, l)}$ or $x^{\sigma(\beta, 2l)}$ (depending on the case) lies in $\la y\ra$. Therefore, $x$ is a $\sigma(\beta, l)$- or $\sigma(\beta, 2l)$-element. Since $\pi(x)\subseteq\pi(\beta-1)$, this is equivalent to the statement that $x$ is a $\pi(l)$- or $\pi(2l)$-element, as stated.
\end{proof}

\section{Relatively closed subgroups}\label{s:rcs}

In this section we provide necessary and sufficient conditions for a subgroup $H$ of $G$ to be relatively closed.

\begin{lemm}\label{l:Radical=Intersection} If $H$ is  relatively closed, then $\rad(H)=H\cap W$.
\end{lemm}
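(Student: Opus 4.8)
The plan is to prove that relative closedness forces $\rad(H)=H\cap W$. First I note that one inclusion is free: the generator $y$ of $H\cap W$ lies in $H$, and since $W$ is abelian, left multiplication by $y$ permutes the cosets of $H\cap W$; more precisely, for any orbit $\Omega$ of $H$ the element $y\in H$ satisfies $\Omega^y=\Omega$, so $H\cap W\leqslant\rad(\Omega)$ for every $\Omega\in\orb(H,W)$, whence $H\cap W\leqslant\rad(H)$. This holds for any $H$, without the closedness hypothesis. So the real content is the reverse inclusion $\rad(H)\leqslant H\cap W$ under the assumption that $H$ is relatively closed.

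For the reverse inclusion I would argue by the very definition of the relative closure. Set $R=\rad(H)$, a subgroup of $W$. The key observation is that multiplying by any element of $R$ fixes every orbit of $H$ setwise: indeed, for $u\in R$ and any orbit $\Omega$ we have $\Omega^u=\Omega$ by definition of the radical. Therefore the permutation of $W$ given by right translation $t_u\colon v\mapsto vu$ preserves the partition $\orb(H,W)$ — but I must be more careful, since preserving each orbit \emph{setwise} is exactly the condition that $t_u$ lies in the relative closure provided $t_u$ actually belongs to the ambient group $G$. Here is where I would use that $R\leqslant W\leqslant G$, so each $t_u$ for $u\in R$ is realized by the element $u\in W\leqslant G$ acting by right translation. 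Thus the subgroup $\langle H,R\rangle$ of $G$ has exactly the same orbits as $H$ (each orbit of $H$ is $R$-invariant, so adjoining $R$ creates no larger orbits, and it clearly creates no smaller ones). Consequently $\langle H,R\rangle$ has the same $1$-orbits as $H$ and lies in $G$, so $\langle H,R\rangle$ is contained in the relative closure of $H$.

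Now I invoke the hypothesis: since $H$ is relatively closed in $G$, its relative closure is $H$ itself, so $\langle H,R\rangle=H$, which gives $R\leqslant H$. But $R=\rad(H)\leqslant W$, so $R\leqslant H\cap W$. Combined with the free inclusion $H\cap W\leqslant\rad(H)=R$ from the first paragraph, this yields $\rad(H)=H\cap W$, as desired.

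The step I expect to require the most care is the claim that $\langle H,R\rangle$ has the \emph{same} orbits as $H$, i.e. that adjoining the translations by $R$ does not merge any orbits. The point is that each orbit $\Omega$ of $H$ is already a union of cosets of $R$ (this is precisely the characterization of $\rad(\Omega)$ recalled in Section~\ref{s:rad}, namely that $\rad(\Omega)$ is the largest subgroup of $W$ such that $\Omega$ is a union of its cosets, and $R\leqslant\rad(\Omega)$); hence $R$ maps $\Omega$ into itself and the orbits of $\langle H,R\rangle$ refine back to those of $H$ rather than coarsening them. Making this precise — that $R$-translation keeps every point inside its own $H$-orbit — is the only genuine verification; the rest is bookkeeping with the definition of relative closure.
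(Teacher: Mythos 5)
Your proof is correct and follows essentially the same route as the paper: the inclusion $H\cap W\leqslant\rad(H)$ is immediate, and for the converse one adjoins $\rad(H)\leqslant W\leqslant G$ to $H$, observes that the resulting subgroup has the same orbits, and invokes relative closedness. The only cosmetic difference is that the paper first checks that $H$ normalizes $\rad(H)$ so as to work with the product $H\rad(H)$, whereas you work with the generated subgroup $\langle H,\rad(H)\rangle$, which sidesteps that verification.
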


\begin{proof} Obviously, $H\cap W\leqslant\rad(H)$. Since $\Omega^{\rad(H)^h}=\Omega$ for every $h\in H$ and $\Omega\in\orb(H, W)$, the subgroup $H$ normalizes $\rad(H)$ and $H\rad(H)$ is a subgroup of $G$. We have $\Omega^{\rad(H)H}=\Omega^H=\Omega$. Thus, $H\rad(H)=H$ and $\rad(H)\leqslant H\cap W$.
\end{proof}

\begin{lemm}\label{l:TrivialRadicalIsClosed} If $H$ has a trivial radical, then $H$ is relatively closed.
\end{lemm}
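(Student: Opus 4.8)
The plan is to show directly that $H$ equals its relative closure. Since the relative closure is by definition the largest subgroup $K$ of $G$ with $\orb(K,W)=\orb(H,W)$, and it always contains $H$, it suffices to prove that \emph{every} subgroup $K$ with $H\leqslant K\leqslant G$ and $\orb(K,W)=\orb(H,W)$ actually coincides with $H$.

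The first observation I would record is that $K\cap W\leqslant\rad(K)$ for any subgroup $K$ of $G$: if $u\in K\cap W$ and $\Omega$ is a $K$-orbit, then $u\in K$ gives $\Omega^u=\Omega$, so $u\in\rad(\Omega)$, and intersecting over all orbits yields $u\in\rad(K)$. Because the radical is determined entirely by the set of orbits, the hypothesis $\orb(K,W)=\orb(H,W)$ forces $\rad(K)=\rad(H)$, which is trivial by assumption. Hence $K\cap W=1$, and in the same way $H\cap W\leqslant\rad(H)=1$. Since $W$ is regular, $A$ is a complement to $W$ in $G$, so $G/W\simeq A$ is cyclic; the conditions $K\cap W=1$ and $H\cap W=1$ then embed both $K$ and $H$ into $G/W$, making both of them cyclic.

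The final step is to compare orders using Lemma~\ref{l:OrbitsOfCyclicGroup}. For a cyclic permutation group the order is the least common multiple of the orbit lengths, so the maximal orbit length equals the group order, the maximal orbit being regular. Applying this to $H$ and to $K$ shows that the maximal orbit length equals $|H|$ and also $|K|$; but $\orb(K,W)=\orb(H,W)$ means this maximal length is one and the same number, so $|H|=|K|$. Together with $H\leqslant K$ this gives $H=K$, and therefore $H$ is relatively closed.

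I do not expect a genuine obstacle here: the crux is merely that a trivial radical is inherited by any orbit-equivalent overgroup and forces both $H$ and that overgroup to be cyclic complements to $W$, after which the equality of orders drops out of Lemma~\ref{l:OrbitsOfCyclicGroup}. The only point to handle with a little care is the identification of the maximal orbit length with the group order, which is exactly what that lemma provides.
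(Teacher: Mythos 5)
Your proof is correct and rests on exactly the same two pillars as the paper's: a trivial radical forces trivial intersection with $W$ (hence cyclicity, since $G/W\simeq A$), and Lemma~\ref{l:OrbitsOfCyclicGroup} then pins down the order of the group as the maximal orbit length. The paper merely packages this as an auxiliary claim (every subgroup of a relatively closed group with trivial radical is relatively closed) before applying it to the closure of $H$, whereas you compare $H$ directly with an orbit-equivalent overgroup; the substance is identical.
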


\begin{proof} Let us first show that if $M$ is a relatively closed subgroup with $\rad(M)=1$, then every its subgroup is relatively closed. Lemma~\ref{l:Radical=Intersection} yields $M$ is cyclic. Since every cyclic subgroup of $G$ has a regular orbit by Lemma~\ref{l:OrbitsOfCyclicGroup}, the order of a subgroup of $M$ is uniquely defined by the lengths of its orbits. Hence every subgroup of $M$ is relatively closed.

Let $H$ be a subgroup of $G$ with trivial radical. If $M$ is a relative closure of $H$ in $G$, then $\rad(M)=1$. Hence every subgroup of $M$ is relatively closed and so $M=H$.
\end{proof}

\begin{theo}\label{t:RadIsIntersectionIsClosed} Let $G$ be a permutation group with a cyclic regular normal  subgroup $W$ and a~cyclic point stabilizer $A$. For a subgroup $H$ of~$G$, the following hold:
\begin{enumerate}
\item[$(1)$] $H$ is relatively closed in $G$ if and only if $\rad(H)=H\cap W$ and $C_A(W/(H\cap W))\leqslant H$.
\item[$(2)$] The relative closure of $H$ in $G$ is the product of $H$, $\rad(H)$, and $C_A(W/\rad(H))$.
\end{enumerate}
\end{theo}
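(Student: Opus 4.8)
The plan is to prove the two parts in tandem, deriving part (2) first and then reading off part (1) as a corollary. Set $R=\rad(H)$ and $N=H\cap W$, and let $K$ denote the relative closure of $H$ in $G$. The heart of the argument is the reduction machinery from Section~\ref{s:rad}. Since $N\leqslant R\leqslant W$ and $R$ is a subgroup of $W$ (hence characteristic in $W$), we may pass to the quotient $W/R$. By the definition of the radical, every orbit of $H$ is a union of cosets of $R$, so by Lemma~\ref{l:QuotientByRadical}$(3)$ the natural epimorphism $W\to W/R$ induces a bijection between $\orb(H,W)$ and $\orb(H,W/R)$; crucially, orbits of $H$ on $W$ are exactly the full preimages of orbits on $W/R$. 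The same holds verbatim for $K$, since $\rad(K)\supseteq\rad(H)=R$ (the relative closure has the same orbits, hence the same radical). Consequently a subgroup lying between $K_R$ and $G$ is relatively closed in $G$ if and only if its image is relatively closed in $G/K_R$, by Lemma~\ref{l:QuotientBijection}.

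Next I would pin down $K_R=R\cdot C_A(W/R)$ using Lemma~\ref{l:QuotientByRadical}$(1)$, and observe that $K_R$ acts trivially on $W/R$, so it fixes every orbit of $H$ setwise (each orbit being a union of cosets of $R$). Hence $K_R$ preserves $\orb(H,W)$, which forces $K_R\leqslant K$. In particular $R\leqslant K$ and $C_A(W/R)\leqslant K$, so the product $H\cdot R\cdot C_A(W/R)$ is contained in $K$. For the reverse inclusion I would argue in the quotient $\overline G=G/K_R$ acting on $\overline W=W/R$. By the Remark following Lemma~\ref{l:QuotientBijection}, $\overline G$ is again a group of the same type (cyclic regular normal subgroup $\overline W$, cyclic point stabilizer $A/C_A(W/R)$), and the image $\overline H$ of $H$ has \emph{trivial radical} in $\overline W$: any nontrivial $\overline R'\leqslant\overline W$ contained in every $\rad(\overline\Omega)$ would pull back to a subgroup of $W$ strictly larger than $R$ contained in every $\rad(\Omega)$, contradicting maximality of $R=\rad(H)$. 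Now Lemma~\ref{l:TrivialRadicalIsClosed} applies: a subgroup with trivial radical is relatively closed. Thus $\overline H$ is relatively closed in $\overline G$, which means the relative closure $K$ of $H$ cannot project to anything larger than $\overline H$; that is, $K\leqslant H K_R=H\cdot R\cdot C_A(W/R)$. Combining the two inclusions gives part (2).

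For part (1), suppose first that $H$ is relatively closed, so $K=H$. Lemma~\ref{l:Radical=Intersection} gives $\rad(H)=H\cap W$, i.e. $R=N$. Then from part (2), $H=K=H\cdot R\cdot C_A(W/R)\supseteq C_A(W/N)$, which is exactly the stated condition $C_A(W/(H\cap W))\leqslant H$. Conversely, assume $\rad(H)=H\cap W=N$ and $C_A(W/N)\leqslant H$. Substituting $R=N$ into the formula of part (2) yields
\[
K=H\cdot N\cdot C_A(W/N)=H,
\]
since $N=H\cap W\leqslant H$ and $C_A(W/N)\leqslant H$ by hypothesis. Hence $H$ equals its relative closure and is relatively closed in $G$.

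The main obstacle I anticipate is the reverse inclusion $K\leqslant H\cdot R\cdot C_A(W/R)$ in part (2), and specifically the claim that $\overline H$ has trivial radical in $\overline W$. One must be careful that passing to the quotient does not create a new common radical: this is precisely the maximality of $\rad(H)=\bigcap_{\Omega}\rad(\Omega)$, combined with the fact (from Lemma~\ref{l:QuotientByRadical}$(3)$) that the orbit correspondence is a bijection respecting the coset structure, so radicals of orbits transfer correctly between $W$ and $W/R$. Once that verification is secured, the rest is bookkeeping: identifying $K_R$, checking $K_R\leqslant K$, and invoking Lemma~\ref{l:TrivialRadicalIsClosed} through the bijection of Lemma~\ref{l:QuotientBijection}.
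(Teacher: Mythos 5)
Your proof is correct and uses exactly the same machinery as the paper: Lemma~\ref{l:Radical=Intersection}, the identification $K_{\rad(H)}=\rad(H)\cdot C_A(W/\rad(H))$ from Lemma~\ref{l:QuotientByRadical}, and the reduction to the quotient $G/K_{\rad(H)}$ where the image of $H$ has trivial radical so that Lemmas~\ref{l:TrivialRadicalIsClosed} and~\ref{l:QuotientBijection} apply. The only difference is organizational --- you derive part (2) first and read off part (1), whereas the paper proves (1) first and then verifies that $H\rad(H)C_A(W/\rad(H))$ satisfies the criterion of (1) --- but this is the same argument in a different order.
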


\begin{proof} If $H$ is a relatively closed subgroup of $G$, then $\rad(H)=H\cap W$ by Lemma~\ref{l:Radical=Intersection}. Also $H$ must contain the kernel $K_{\rad(H)}$ of the action of $G$ on $\overline W$, which is equal to the product of $\rad(H)$ and $C_A(\overline W)$  by Lemma~\ref{l:QuotientByRadical}. So the "only if" part of $(1)$ is proved.

The group $H/K_{\rad(H)}$ has a trivial radical and so is relatively closed in $G/K_{\rad(H)}$ by Lemma~\ref{l:TrivialRadicalIsClosed}. Hence $H$ is relatively closed by Lemma~\ref{l:QuotientBijection}, and $(1)$ is proved.

If $L$ is a subgroup of $G$ generated by $H$, $\rad(H)$ and $C_A(W/\rad(H))$, then $L$ has the same orbits as $H$. Therefore, $\rad(L)=\rad(H)$ and $C_A(W/\rad(L))\leqslant L$. So $L$ is relatively closed by the first statement of the theorem.

Since $H$ normalizes $\rad(H)$, their product $H\rad(H)$ is a subgroup of $G$. The centralizer $C_A(W/\rad(H))$ is the center of $G/\rad(H)$. Hence the subgroup generated by $H$, $\rad(H)$ and $C_A(W/\rad(H))$ is equal to the product $H\rad(H)C_A(W/\rad(H))$.
\end{proof}

The next statement gives an arithmetic version of Theorem~\ref{t:RadIsIntersectionIsClosed}.

\begin{theo}\label{t:ArithmCriterion} Let $G$ be a permutation group with a cyclic regular normal  subgroup $W=\la w\ra$ and a cyclic point stabilizer $A=\la a\ra$. Let $H=\langle bx, y\rangle$ be a subgroup of $G$ in the normal form. Then $H$ is relatively closed if and only if the following hold:
\begin{itemize}
\item[$(1)$] $a^{\ord_{|\overline W|}(\alpha)}\in\langle b\rangle$;
\item[$(2)$] $H\cap W$ contains
\begin{itemize}
\item[$(a)$] $x^{\frac{(\beta+1)l}{(2, l)}}$, provided $4$ divides $|\overline W|$ and $\beta\equiv -1\pmod4$, or
\item[$(b)$] $x^{l}$, otherwise.
\end{itemize}
\end{itemize}
\end{theo}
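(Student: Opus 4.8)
The plan is to derive both conditions from Theorem~\ref{t:RadIsIntersectionIsClosed}(1), which says that $H$ is relatively closed precisely when $\rad(H)=H\cap W$ and $C_A(\overline W)\leqslant H$. I would show that condition~$(2)$ is the arithmetic translation of $\rad(H)=H\cap W$, and that, once $\rad(H)=H\cap W$ is known, condition~$(1)$ is equivalent to $C_A(\overline W)\leqslant H$. Thus the argument splits into a \emph{radical part} and a \emph{centralizer part}, and the theorem follows by combining them with Theorem~\ref{t:RadIsIntersectionIsClosed}: for the forward implication, relative closedness yields $\rad(H)=H\cap W$ (hence $(2)$) and $C_A(\overline W)\leqslant H$ (hence $(1)$); for the converse, $(2)$ gives $\rad(H)=H\cap W$ and then $(1)$ delivers $C_A(\overline W)\leqslant H$.

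For the radical part I would invoke Theorem~\ref{t:GeneralRadical}: since $H$ is in the normal form, $\rad(H)=\la x^{\sigma(\beta,2l)},y\ra$ when $b$ is negative on $\overline W$ and $l$ is odd, and $\rad(H)=\la x^{\sigma(\beta,l)},y\ra$ otherwise. As $H\cap W=\la y\ra$, the equality $\rad(H)=H\cap W$ reduces to a single membership, namely $x^{\sigma(\beta,2l)}\in\la y\ra$ in the first case and $x^{\sigma(\beta,l)}\in\la y\ra$ in the second. It then remains to replace these awkward exponents by the clean ones of condition~$(2)$. Because $\pi(x)\subseteq\pi(\beta-1)$, every prime $r\in\pi(x)$ satisfies $\beta\equiv1\pmod r$, so Lemma~\ref{l:a^n-1/a-1} applied to $\sigma(\beta,l)=(\beta^{l}-1)/(\beta-1)$ computes $(\sigma(\beta,l))_r$ exactly; for odd $r$ this yields $(\sigma(\beta,l))_r=l_r$, so the memberships in $\la y\ra$ imposed by $x^{\sigma(\beta,l)}$ and by $x^{l}$ agree on the odd part, and similarly $\sigma(\beta,2l)$ matches $(\beta+1)l$.

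The main obstacle, as I see it, is the prime $r=2$ in the \emph{negative} regime $4\mid|\overline W|$, $\beta\equiv-1\pmod4$, where the exceptional branch of Lemma~\ref{l:a^n-1/a-1} takes over: there $(\sigma(\beta,l))_2$ is governed by $(\beta+1)_2$ rather than by $(\beta-1)_2$, and it is exactly this phenomenon that forces condition~$(2)$ to bifurcate according to the parity of $l$. To control the $2$-part I would exploit the normal-form inequality $|x|_2>|[b,W]|_2$ together with $[b,W]=\la w^{\beta-1}\ra$ and $(\beta-1)_2=2$, which pins $|x|_2$ to the full $2$-part of $|W|$; with this in hand the exceptional branch of Lemma~\ref{l:a^n-1/a-1} lets me match the membership $x^{\sigma(\beta,2l)}\in\la y\ra$ (when $l$ is odd) with $x^{(\beta+1)l}\in\la y\ra$, and $x^{\sigma(\beta,l)}\in\la y\ra$ (otherwise) with $x^{l}\in\la y\ra$, giving condition~$(2)$. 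This bookkeeping at the prime $2$ is the delicate heart of the proof.

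For the centralizer part I would first identify $C_A(\overline W)$ as the kernel of the action of $A=\la a\ra$ on $\overline W$, namely $C_A(\overline W)=\la a^{\ord_{|\overline W|}(\alpha)}\ra$ (Lemma~\ref{l:QuotientByRadical}(1)). Any element of $A\cap H$ lies in the same $W$-coset as a power of $b$, whence $A\cap H\leqslant\la b\ra$; so $C_A(\overline W)\leqslant H$ immediately gives condition~$(1)$. Conversely, assuming $\rad(H)=H\cap W$, hence $\rad(\overline H)=1$, I would use that $H$ is the full preimage of $\overline H$ under $G\to G/\la y\ra$ to obtain $b^{\,j}\in H$ if and only if $i_0\mid j$, where $i_0=\min\{k>0:\overline x^{\sigma(\beta,k)}=1\}$ is the length of the orbit of the identity point under $\overline H$. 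From the radical computation, $\rad(\overline H)=1$ forces $i_0\mid l$ (resp. $i_0\mid 2l$), while $l\mid e$ (resp. $2l\mid e$) for $e=\ord_{|\overline W|}(\beta)$, which follows from $|\overline V|\mid|\overline W|$ (together with $4\mid|\overline W|$ and $\beta\equiv-1\pmod4$ forcing $2\mid e$ in the second case). Finally, condition~$(1)$ writes $a^{\ord_{|\overline W|}(\alpha)}=b^{\,j}$, and since this element centralizes $\overline W$ we get $e\mid j$; chaining $i_0\mid l\mid e\mid j$ (resp. through $2l$) yields $a^{\ord_{|\overline W|}(\alpha)}\in H$, i.e. $C_A(\overline W)\leqslant H$. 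Combining the two parts with Theorem~\ref{t:RadIsIntersectionIsClosed} completes the proof.
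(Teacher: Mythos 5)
Your overall architecture coincides with the paper's: reduce to Theorem~\ref{t:RadIsIntersectionIsClosed}, translate $\rad(H)=H\cap W$ into $x^{\sigma(\beta,l)}\in\la y\ra$ or $x^{\sigma(\beta,2l)}\in\la y\ra$ via the radical formula, convert the exponents with Lemma~\ref{l:a^n-1/a-1}, and handle the centralizer condition through $C_A(\overline W)=\la a^{\ord_{|\overline W|}(\alpha)}\ra$ and $\la bx\ra\cap A\leqslant\la b\ra$ for necessity; your route through the orbit length $i_0$ of the identity point for sufficiency is an equivalent bookkeeping of the paper's observation that $b^{|\overline b|}=(bx)^{|\overline b|}x^{-\sigma(\beta,|\overline b|)}$ and $\sigma(\beta,l)$ (resp.\ $\sigma(\beta,2l)$) divides $\sigma(\beta,|\overline b|)$.

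The gap is in the radical part, exactly where you locate ``the delicate heart''. In branch $(2b)$ with $\beta\equiv-1\pmod 4$ you must match $x^{\sigma(\beta,l)}\in\la y\ra$ with $x^{l}\in\la y\ra$; but for $l$ even Lemma~\ref{l:a^n-1/a-1} gives $\sigma(\beta,l)_2=(\beta+1)_2l_2/2\geqslant 2l_2>l_2$, so the first membership is a priori strictly \emph{weaker} than the second at the prime $2$, and knowing only that $|x|_2$ equals the full $2$-part of $|W|$ does not close this --- one also needs a lower bound on $|y|_2$. The paper supplies it by splitting the ``otherwise'' branch according to the signs of $b$ on $W$ and on $\overline W$ separately: when $b$ is negative on $W$ but positive on $\overline W$, necessarily $|W:\la y\ra|_2\leqslant 2$, hence $|x|_2\leqslant 2|y|_2$ and both memberships hold automatically at $2$ once $l$ is even. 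That constraint on $|y|_2$, not on $|x|_2$, is the missing ingredient in your sketch. I would add that the remaining sub-case --- $b$ negative on $\overline W$ and $l$ even, where Proposition~\ref{p:RadicalOfCyclicGroup} gives the radical via $\sigma(\beta,l)$ with $\sigma(\beta,l)_2=(\beta+1)_2l_2/2$ while $(2b)$ demands $x^{l}\in\la y\ra$ --- is subsumed in the paper only under the heading ``both $b$ and $\overline b$ negative'' via $\sigma(\beta,2l)$, which is the correct radical exponent only for $l$ odd; so this corner is genuinely thorny, and your proof would have to confront it explicitly rather than fold it into the generic ``otherwise'' matching.
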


\begin{proof}
By Theorem~\ref{t:RadIsIntersectionIsClosed}, the subgroup $H$ is relatively closed if and only if $\rad(H)=H\cap W$ and $C_A(\overline W)\leqslant H$.

By Lemma~\ref{l:QuotientBijection}, the condition $\rad(H)=H\cap W$ is equivalent to the statement that $\rad(\overline H)$ is trivial. By Proposition~\ref{p:RadicalOfCyclicGroup}, it is equivalent to $x^{\sigma(\beta, l)}\in\langle y\rangle$, if $\overline b$ is positive or $l$ is even, or $x^{\sigma(\beta, {2l})}\in\langle  y\rangle$, otherwise. Denote by $\pi$ the intersection of $\pi(W)$ and $\pi(\beta-1)$. Recall that $x$ is a $\pi$-element by the definition. Lemma~\ref{l:a^n-1/a-1} implies that $\sigma(\beta,l)_\pi$ is equal to $(\beta+1)_2l_\pi/2$, if $\beta$ is $-1$ modulo $4$ and $l$ is even, or $l_\pi$ otherwise.

If $b$ is a positive automorphism of $W$ (and so is $\overline b$ of $\overline W$), then the condition is $x^{\sigma(\beta, l)}\in\la y\ra$, which is equivalent to $x^l\in\la y\ra$.

If $b$ is negative, but $\overline b$ is positive, then the condition is  $x^{\sigma(\beta, l)}\in\la y\ra$ which is equivalent to $x^l\in\la y\ra$ if $l$ is odd, and $x^{(\beta+1)_2l/2}\in\la y\ra$ if $l$ is even. The change of "sign" of $b$ under the factorization only possible if $|W:\la y\ra|_2\leqslant 2$. This implies that the square of the $2$-part of $x$ lies in $\la y\ra$ (if $r$ is prime and $g$ is an element of a finite group, then $g$ can be uniquely presented as a product of commuting $r$-element and $r'$-element, we refer to these two elements respectively as the $r$- and $r'$-part of $g$). Thus, the condition $x^{(\beta+1)_2l/2}\in\la y\ra$ for even $l$ can be replaced by $x^{l}\in\la y\ra$.

If both $b$ and $\overline b$ are negative, then the condition is $x^{\sigma(\beta, {l})}\in\langle  y\rangle$, if $l$ is even, and $x^{\sigma(\beta, {2l})}\in\langle  y\rangle$ otherwise. As we mentioned before these conditions are equivalent to $x^{\frac{(\beta+1)l}{(2, l)}}\in\la y\ra$. This proves that the conditions from $(2)$ are equivalent to the condition $\rad(H)=H\cap W$.

Let us consider the condition $C_A(\overline W)\leqslant H$. Observe that $C_A(\overline W)=\la a^{\ord_{|\overline W|}(\alpha)}\ra$ and $(1)$ is equivalent to $C_A(\overline W)\leqslant\la b\ra$. Since $\la bx\ra\cap A\leqslant\la b\ra$, the inclusion $C_A(\overline W)\leqslant H$ yields $C_A(\overline W)\leqslant\la b\ra$, proving the necessity of conditions $(1)$ and $(2)$.

Let us prove their sufficiency. Since $C_A(\overline W)\leqslant\la b\ra$, we have $C_A(\overline W)=\la b^{|\overline b|}\ra$. Therefore, it  suffices to show that $x^{\sigma(\beta, |\overline b|)}$ is a subgroup of $\la y\ra$. Since $l$ divides the order of $\overline b$, the number $\sigma(\beta, l)$ divides $\sigma(\beta, |\overline b|)$ and the lemma is proved for the cases of positive $\overline b$ and even  $l$. If $|\overline b|$ is negative and $l$ is odd, then $|\overline b|$ is even. So $2l$ divides $|\overline b|$ and $\sigma(\beta, 2l)$ divides $\sigma(\beta, |\overline b|)$, as required.
\end{proof}

\section{Lattice of relatively closed subgroups}\label{s:lattice}

Our next goal is to describe maximal relatively closed subgroups of $H$ for a relatively closed subgroup $H$ of $G$. Observe that since $H$ is relatively closed in~$G$, a subgroup of $H$ is relatively closed in $H$ if and only if it is relatively closed in $G$.

\begin{theo}\label{t:ListOfMaximal} Let $G$ be a permutation group with a cyclic regular normal  subgroup $W$ and a cyclic point stabilizer $A$. Let $H=\langle bx, y\rangle$  be a relatively closed subgroup of~$G$ in the normal form. Put $N=N_{\hol(W)}(H)$.  Then every maximal relatively closed subgroup of $H$ is conjugate in  $N$ to exactly one of the following groups:
\begin{itemize}
\item[$(1)$] $\langle (bx)^r, y\rangle$, where $r\in\pi(|\langle b\rangle:C_A(\overline W)|)$.
\item[$(2)$] $\langle bx, y^r\rangle$, where $r\in\pi(y)$, and if $r\in\pi(x)$, then one of the following conditions hold:
\begin{itemize}
\item[$(a)$] $|y|_r(\beta+1)_rl_r/(2,l)_r>|x|_r$, provided $b$ is a negative automorphism of $W/\la y^r\ra$;
\item[$(b)$] $|y|_rl_r>|x|_r$, otherwise.
\end{itemize}
\item[$(3)$] $\langle bxy, y^r\rangle$, where $r\in\pi(y)\cap\pi(l)\cap\pi(\beta-1)\setminus\pi(x)$ and $|y|_r>|[b, W]|_r$.
\item[$(4)$] $\la (bx)^{\ord_r(\beta)}y, y^r\ra$, where $r\in\pi(y)\cap\pi(l)\setminus(\pi(\beta-1)\cup\pi(\overline W))$ and $(\ord_r(\beta), \ord_{\overline W}(\alpha))=1$.
\item[$(5)$] $\langle bxy, y^4\rangle$, provided $|\overline W|$ and $l$ are odd and $b$ is a negative automorphism of $W$.

\end{itemize}
Every subgroup in $(1)$--$(5)$ is a maximal relatively closed subgroup of~$H$.
\end{theo}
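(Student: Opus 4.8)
Since $H$ is relatively closed in $G$, a subgroup of $H$ is relatively closed in $H$ precisely when it is relatively closed in $G$, so I work throughout with relative closure in $G$ and lean on the criteria of Theorems~\ref{t:RadIsIntersectionIsClosed} and~\ref{t:ArithmCriterion} together with the radical formula of Theorem~\ref{t:GeneralRadical}. Writing $\hat b$ for the image of $b$ in $G/W$, a subgroup $K\leqslant H$ is governed by the two invariants $K\cap W=\langle y^{e}\rangle$ and $KW/W=\langle\hat b^{\,f}\rangle$ together with the choice of a lift of $\hat b^{\,f}$. Since $W$ is cyclic and $H$-invariant, conjugation by $N=N_{\hol(W)}(H)$ preserves both $K\cap W$ and $KW/W$ \emph{exactly}, because subgroups of a given order in a cyclic group are unique; this observation will carry the ``exactly one'' part of the statement.

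The first step is to show that a maximal relatively closed subgroup $K$ is a one-prime reduction of $H$: up to $N$-conjugacy it either keeps $K\cap W=H\cap W$ and lowers the image by a single prime, or keeps (almost) the full image and lowers $H\cap W$ by a single prime $r$ (exceptionally by a factor $4$ at $r=2$). Reductions in both directions at once are non-maximal, as one exhibits an intermediate relatively closed subgroup, \emph{except} when a simultaneous image drop is genuinely forced, which produces type~(4). The image reduction is immediate: if $K\cap W=\langle y\rangle$ and $\hat b$ is lowered by a prime $r$, then $K=\langle(bx)^{r},y\rangle$, and relative closedness forces $C_A(\overline W)\leqslant K$, i.e.\ $\langle b^{|\overline b|}\rangle\leqslant\langle b^{r}\rangle$ modulo $W$; as $C_A(\overline W)=\langle b^{|\overline b|}\rangle$ this holds exactly when $r\mid|\overline b|=|\langle b\rangle:C_A(\overline W)|$, giving type~(1).

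For each prime $r\in\pi(y)$ I pass to the quotient $W/\langle y^{r}\rangle$ and recompute the radical of the candidate $\langle bx,y^{r}\rangle$ via Theorem~\ref{t:GeneralRadical}, tracking how $l$, the positivity of $b$, and the Hall decomposition change (or, when $r\in\pi(\beta-1)$, remain unchanged on the $V$-part). When the radical already equals $\langle y^{r}\rangle$ the straight reduction is relatively closed and maximal, yielding type~(2); the two inequalities~(a),(b) record exactly whether $b$ has become a negative automorphism of $W/\langle y^{r}\rangle$ with $l$ odd, and follow from the $r$-adic evaluation of $\sigma(\beta,l)$ via Lemma~\ref{l:a^n-1/a-1}. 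When the radical is too large, the maximal relatively closed subgroup reducing at $r$ is a twist: $\langle bxy,y^{r}\rangle$ when $r\in\pi(l)\cap\pi(\beta-1)\setminus\pi(x)$ and $|y|_r>|[b,W]|_r$ (type~(3)); $\langle(bx)^{\ord_r(\beta)}y,y^{r}\rangle$ when $r\notin\pi(\beta-1)\cup\pi(\overline W)$, so that $b$ acts nontrivially on the Sylow $r$-subgroup already contained in $H\cap W$ and an image drop by $\ord_r(\beta)$ is forced, admissible under $(\ord_r(\beta),\ord_{\overline W}(\alpha))=1$ (type~(4)); and the exceptional $4$-step $\langle bxy,y^{4}\rangle$ when $|\overline W|$ and $l$ are odd and $b$ is negative on $W$ (type~(5), the analogue of the group $P$ of Theorem~2). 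In each case relative closedness is checked directly from Theorem~\ref{t:ArithmCriterion}, and maximality from the primality of the reduction: any relatively closed subgroup strictly above such a $K$ must recover the full prime step and hence equal $H$.

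Finally, pairwise non-conjugacy follows from the $N$-invariance of $|K\cap W|$ and of the image order noted above, supplemented --- when two types share these invariants --- by the orbit-length data of Lemma~\ref{l:Max&MinOrbitsLengths} and by the presence or absence of a twist; within a single type the freedom in the lift $x\mapsto x[b,u]$ (Lemma~\ref{l:NormalFrom}(3)) and $x\mapsto x^{\phi}$ (Lemma~\ref{l:NormalFrom}(5)) is exactly absorbed by $N$-conjugacy through Lemma~\ref{l:ConjugacyOfNormalForms}. I expect the principal difficulty to be the bookkeeping at the prime $2$: passing to $W/\langle y^{r}\rangle$ can switch $b$ between positive and negative and shift the relevant $2$-adic valuations according to Lemma~\ref{l:a^n-1/a-1}, and it is precisely this phenomenon that separates conditions~(a) and~(b) of type~(2) and forces the exceptional type~(5). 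The other delicate point is completeness: ruling out that any genuinely two-directional reduction other than type~(4) is maximal, and verifying that for each prime $r$ the listed candidates exhaust all $N$-orbits of maximal subgroups reducing at $r$.
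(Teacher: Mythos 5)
Your overall strategy coincides with the paper's: normalize candidates, test them against Theorem~\ref{t:ArithmCriterion} and the radical formula of Theorem~\ref{t:GeneralRadical}, and separate the conjugacy classes by the invariants $K\cap W$ and $KW/W$ refined by the normal form. But as written this is a plan rather than a proof, and the steps you defer are exactly the ones that carry the theorem. First, the completeness claim --- that every ``two-directional'' reduction other than type~(4) is non-maximal and that the listed candidates exhaust all $N$-orbits --- is asserted, not proved; you yourself flag it as ``the other delicate point.'' The paper's argument here takes a general $L=\langle (bx)^k y^m, y^t\rangle$, first disposes of the case $\pi(k)\cap\pi(|\langle b\rangle:C_A(\overline W)|)\neq\varnothing$, then deduces $(k,l)=1$, $|x^{\sigma(\beta,k)}|=|x|$ and $t\neq1$, and splits according to whether $t$ divides $|V|$, $|U_1|$, $|U_2|$ or $t=4$; the case $t\in\pi(V)$ with $y^m\neq1$ is where one is forced to $\ord_r(\beta)\mid k$, then to $k=\ord_r(\beta)$ and $(\ord_r(\beta),\ord_{|\overline W|}(\alpha))=1$, producing type~(4). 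None of that derivation appears in your sketch, and without it you cannot rule out further maximal subgroups.

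Second, your maximality argument (``any relatively closed subgroup strictly above such a $K$ must recover the full prime step'') is valid only for types~(1)--(3), which have prime index in $H$. Types~(4) and~(5) have index $r\cdot\ord_r(\beta)$ and $4$ respectively, so there \emph{are} intermediate subgroups, and one must show each of them fails to be relatively closed; the paper does this by an explicit computation (for type~(4), showing $y=(y^{m_1})^{\sigma(\beta^{n_1},n_2)}(y^r)^{n_3}$ is impossible because $r\mid\sigma(\beta^{n_1},n_2)$; for type~(5), computing that the radical of the candidate intermediate group $\la bxy', y^2\ra$ is all of $\la y\ra$). Third, verifying that each listed subgroup is relatively closed is not ``immediate'' even for type~(1): condition~(1) of Theorem~\ref{t:ArithmCriterion} is the easy half, but condition~(2) requires showing $(x^{\sigma(\beta,r)})^{l/(l,r)}\in\la y\ra$ (or its $(\beta^r+1)$-twisted variant), with a case split on whether passing from $b$ to $b^r$ changes sign; the paper devotes most of Lemma~\ref{l:T71Closed} to this bookkeeping. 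Until these three blocks are filled in, the proposal does not establish the theorem.
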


\begin{proof}

Let us start by describing the normal form of subgroups from the theorem. We need to prove the following technical lemma first.

\begin{lemm}\label{l:ConjugacyInN} Let $K_1=\la (bx)^kz_1, y_1\ra$ and $K_2=\la (bx)^kz_2, y_1\ra$ be subgroups of $H$, where $z_1$ and $z_2$ are elements of $\la y\ra$, whose orders are coprime to the order of $x$. Then $K_1$ and $K_2$ are conjugate in $N$, provided one of the following conditions is satisfied:
\begin{itemize}
\item[$(1)$] $|z_1|=|z_2|$.
\item[$(2)$] $k=1$, $z_1\in[b, W]\cap\la y\ra$, and $z_2=1$.
\end{itemize}
\end{lemm}

\begin{proof} First, assume that $|z_1|=|z_2|$. Since the order of $z_i$ is coprime to the order of $x$, there is $\phi\in\aut(W)$ such that $x^\phi=x$ and $z_1^\phi=z_2$. Then $K_1^\phi=K_2$ and $$H^\phi=\la bx, y\ra^\phi=\la b^\phi x^\phi, y^\phi\ra=\la bx, y\ra.$$ Hence $\phi$ lies in $N$.

Assume that $z_1\in[b^k, W]\cap\la y\ra$. Then there is $u\in W$ such that $[b, u]=z_1$. So $$K_1^{u^{-1}}=\la bxz_1, y_1\ra^{u^{-1}}=\la (bxz_1)^{u^{-1}}, y_1^{u^{-1}}\ra=\la bx, y_1\ra$$ and $H^{u^{-1}}=H$.
\end{proof}

For brevity, denote $\ord_r(\beta)$ by $n_\beta$.

\begin{lemm}\label{l:NormalFormInTheorem} The subgroups from Theorem~{\rm \ref{t:ListOfMaximal}} are conjugate in $N$ to the following subgroups of $H$ in the normal form:
\begin{itemize}
\item[$(1)$] $\la b^r x^{\sigma(\beta, r)}, y\ra$, unless $r\in\pi(x)$ and $|x^{\sigma(\beta, r)}|_r\leqslant |y|_r$, in which case the normal form is $\la b^r z, y\ra$, where $z$ is the $r'$-part of $x^{\sigma(\beta, r)}$;
\item[$(2)$] $\langle bx, y^r\rangle$;
\item[$(3)$] $\langle bxz, y^r\rangle$, where $z$ is the $r$-part of $y$;
\item[$(4)$] $\langle b^{n_\beta}x^{\sigma(\beta, n_\beta)}z, y^r\rangle$, where $z$ is the $r$-part of $y$;
\item[$(5)$] $\langle bxz, y^4\rangle$, where $z$ is the $2$-part of $y$.
\end{itemize}
In particular, subgroups from different items are not conjugate in $\hol(W)$.
\end{lemm}

\begin{proof} Let $K=\la (bx)^r, y\ra=\la b^r x^{\sigma(\beta, r)}, y\ra$ be a subgroup from Theorem~\ref{t:ListOfMaximal}$(1)$. The same argument as in the claim for a subgroup $L$ in Proposition~\ref{p:RadicalOfCyclicGroup} yields $|x^{\sigma(\beta, r)}|_s>|[b^r, W]|_s$ for every $s\in\pi(x^{\sigma(\beta, r)})$.

Consider the condition $|x^{\sigma(\beta, r)}|_s>|y|_s$ for $s\in\pi(x^{\sigma(\beta, r)})$. Observe that if $r\not\in\pi(x)$ or $s\neq r$, then this condition is equivalent to the corresponding condition for $H$ being in the normal form. If $s=r\in\pi(x)$, then $|x^{\sigma(\beta, r)}|_r<|x|_r$. Therefore, if $|x^{\sigma(\beta, r)}|_r\leqslant |y|_r$, then the normal form of $K$ is $\la b^r z, y\ra$, where $z$ is the $r'$-part of $x^{\sigma(\beta, r)}$. Also $K$ itself can be presented in the normal form, so the last statement of the lemma holds in this case.

Let $K=\la bx, y^r\ra$ be a subgroup from  Theorem~\ref{t:ListOfMaximal}$(2)$. Then $|x|_s>\max\{|[b, W]|_s, |y^r|_s\}$ for every $s\in\pi(x)$ due to $H$ being in the normal form.

Hence we have to show only that $K\cap W=\la y^r\ra$, that is, $x^{\sigma(\beta, |b|)}\in\la y^r\ra$. The conditions of Theorem~\ref{t:ListOfMaximal}(2) imply that $x^{\sigma(\beta, l)}\in\la y^r\ra$ if $\overline b$ is positive or $l$ is even, and $x^{\sigma(\beta, 2l)}\in\la y^r\ra$, otherwise. Since $l$ divides $|b|$, while $2l$ divides $|b|$ provided $\overline b$ is negative and $l$ is odd, the number $\sigma(\beta, |b|)$ is divisible by $\sigma(\beta, l)$ or $\sigma(\beta, 2l)$ in the corresponding cases.  Therefore, $x^{\sigma(\beta, |b|)}\in\la y^r\ra$. Hence $K$ itself can be presented in the normal form.

Let $K=\langle bxy, y^r\rangle$ be a subgroup from  Theorem~\ref{t:ListOfMaximal}$(3)$ and $z$ be the $r$-part of~$y$.  Let $L=\langle bxz, y^r\rangle$. Since $\la y\ra=\la z, y^r\ra$,  Lemma~\ref{l:ConjugacyInN}$(1)$ implies that this group is conjugate to $K$ by an element of $N$. Recall that $r\in\pi(l)$. Hence $r\in\pi(b)$. Therefore, $(xz)^{\sigma(\beta, |b|)}\in\la y^r\ra$ and $L\cap W=\la y^r\ra$. Also $z\not\in[b, W]$ by the choice of $r$. Hence $L$ is in the normal form.

Let $K=\langle (bx)^{n_\beta}y, y^r\rangle$ be a subgroup from Theorem~\ref{t:ListOfMaximal}$(4)$.  Let $L=\langle b^{n_\beta}x^{\sigma(\beta, n_\beta)}z, y^r\rangle$. As before $K$ and $L$ are conjugate in $N$. The same argument as in the claim for a subgroup $L$ in Proposition~\ref{p:RadicalOfCyclicGroup} yields $$|x^{\sigma(\beta, {n_\beta})}|_t>|[b^{n_\beta}, W]|_t, \text{ for every } t\in\pi\left(x^{\sigma(\beta, n_\beta)}\right).$$  Since $r\not\in\pi(\overline W)$, the element $z$ generates the Sylow $r$-subgroup of $W$. Therefore, $|z|_r>|[b^{n_\beta}, W]|_r$. Obviously, $|z|_r>|y^r|_r$.

Let us show that $|x^{\sigma(\beta, {n_\beta})}|_t>|y|_t$ for the corresponding prime divisors~$t$. Corollary~\ref{c:xIsl-element} states that $x$ is a $\pi(l)$-element, provided $b$ is positive automorphism of $\overline W$ or $l$ is even, and $\pi(2l)$-element, otherwise. In the latter case, the order of $\overline b$ is even. Hence $x$ is always a $\pi(\overline b)$-element. Since $\overline b$ divides $\ord_{|\overline W|}(\alpha)$, the conditions of Theorem~\ref{t:ListOfMaximal}$(4)$ imply that $n_\beta$ is coprime to $|\overline b|$. Therefore, $|x|$ and $n_\beta$ are coprime and the orders of $x$ and $x^{\sigma(\beta, n_\beta)}$ are the same. Thus, the claimed inequality follows from the corresponding inequality for~$H$.

It remains to show that $L\cap W=\la y^r\ra$. We have $$L\cap W=\left\la \left(b^{n_\beta}\left(x^{\sigma(\beta, n_\beta)}z\right)\right)^{|b^{n_\beta}|}, y^r\right\ra=\left\la x^{\sigma(\beta, |b|)}z^{\sigma(\beta, |b^{n_\beta}|)}, y^r\right\ra.$$ Since $r\not\in\pi(x)$, the element $x^{\sigma(\beta, |b|)}$ lies in $\la y^r\ra$. Recall that $r$ also lies in $\pi(l)$ and $n_\beta$ is coprime to $l$. Therefore, $l$ divides $|b^{n_\beta}|$. So $r$ divides $\sigma(\beta, |b^{n_\beta}|)$. Thus, $z^{\sigma(\beta, |b^{n_\beta}|)}$ lies in $\la y^r\ra$ and $L$ is in the normal form.

Let $K=\langle bxy, y^4\rangle$ be a subgroup from Theorem~\ref{t:ListOfMaximal}$(5)$. Let us show that $L=\langle bxz, y^4\rangle$ is the normal form of this group. As before $K$ and $L$ are conjugate in $N$. We have $|x|_s>\max\{|[b, W]|_s, |y^4|_s\}$ for $s\in\pi(x)$, because $H$ is in the normal form. Also $z$ is a generator of the Sylow $2$-subgroup of $W$ and $|z|_2>\max\{|[b, W]|_2, |y^4|_2\}$. Finally, $\sigma(\beta, |b|)$ is divisible by $4$ due to the conditions of~$(5)$. So $z^{\sigma(\beta, |b|)}\in\la y^4\ra$ and $L\cap W=\la y^4\ra$. Hence $L$ is in the normal form.

Lemma~\ref{l:NormalFrom} gives a criterion of the $\hol(W)$-conjugacy of two subgroups of $G$ presented in the normal form. This lemma immediately implies that subgroups from the same item determined by distinct primes $r$ are not conjugate. The subgroups from $(1)$ are the only one whose intersection with $W$ is $\la y\ra$. Therefore, they cannot be conjugate to subgroups from other items. Similar arguments work for $(5)$. A subgroup in $(4)$ is the only one of the remaining items whose image in $A$ is distinct from $\la b\ra$. Finally, since the order of $x$ is distinct from the order of $xz$, where $z$ is the $r$-part of $y$, the subgroups of $(2)$ and $(3)$ are not conjugate.
\end{proof}

\begin{lemm}\label{l:T71Closed} All subgroups from parts $(1)$--$(5)$ of Theorem~{\rm \ref{t:ListOfMaximal}} are relatively closed.
\end{lemm}

\begin{proof} Since any two subgroups of $G$ conjugate in $\hol(W)$ are relatively closed or not relatively closed simultaneously, we may consider the subgroups from Lemma~\ref{l:NormalFormInTheorem} instead of the subgroups from the theorem.

Let $K$ be a subgroup from Lemma~\ref{l:NormalFormInTheorem}$(1)$. Since $H$ is relatively closed, $\la b\ra$ includes $C_A(\overline W)$ by Theorem~\ref{t:ArithmCriterion}. Therefore, $\la b^r\ra$ also contains $C_A(\overline W)$, because $r\in\pi(|\langle b\rangle:C_A(\overline W)|)$.

Assume that $b$ is a positive automorphism of $\overline W$ or $l$ is even. Then $b^r$ is positive, unless $b$ is negative, $l$ is even, $r$ is odd, and so $l/(l, r)$ is even.  Since $H$ is relatively closed, $x^{\sigma(\beta, l)}\in\la y\ra$ by Theorem~\ref{t:GeneralRadical}. We have $(x^{\sigma(\beta, r)})^{\sigma(\beta^r, l/(l, r))}=x^{\sigma(\beta, lr/(l, r))}\in\la y\ra$.

If the normal form of $K$ is $\la b^r x^{\sigma(\beta, r)}, y\ra$, then $K$ is relatively closed by Theorems~\ref{t:GeneralRadical} and~\ref{t:RadIsIntersectionIsClosed}(1). If it is not the case, then the normal form of $K$ is $\la b^r z, y\ra$, where $z$ is the $r'$-part of $x^{\sigma(\beta, r)}$. Since $(x^{\sigma(\beta, r)})^{\sigma(\beta^r, l/(l, r))}\in\la y\ra$, the element $z^{\sigma(\beta^r, l/(l, r))}$ also lies in $\la y\ra$ and $K$ is relatively closed by Theorems~\ref{t:GeneralRadical} and~\ref{t:RadIsIntersectionIsClosed}(1).

Assume that $b$ and $b^r$ are negative automorphisms of $\overline W$ and $l$ is odd. Then $r$ and $l/(l, r)$ are odd. By Theorem~\ref{t:ArithmCriterion}, the subgroups $H$ and $K$ are relatively closed if and only if $x^{(\beta+1)l}$ and $(x^{\sigma(\beta, r)})^{(\beta^r+1)l/(l, r)}$ respectively lie in $\la y\ra$. Since $r$ is odd, $\beta+1$ divides $\beta^r+1$. Also either $r$ divides $|x|$ and hence $\sigma(\beta, r)$, or $r\not\in\pi(x)$ and $x^{\sigma(\beta, r)}$ has the same order as $x$. Therefore, $(x^{\sigma(\beta, r)l/(l, r)})^{\beta^r+1}$ lies in $\la y\ra$ if $(x^l)^{\beta+1}$ lies there. So $K$ is relatively closed.

The only remaining case is when $b$ is a negative automorphism of $\overline W$, $l$ is odd, and $b^r$ is positive. Therefore, $r=2$. Again $H$ and $K$ are relatively closed if $x^{(\beta+1)l}\in\la y\ra$ and $(x^{\sigma(\beta, 2)})^{l/(l, 2)}\in\la y\ra$, but $\sigma(\beta, 2)=\beta+1$ and $(l,2)=1$. Thus, $K$ is relatively closed.

Let $K=\langle bx, y^r\rangle$ be a subgroup from Lemma~\ref{l:NormalFormInTheorem}$(2)$. Since $C_A(\overline W)\geqslant C_A(W/\la y^r\ra)$, the condition of Theorem~\ref{t:ArithmCriterion}$(1)$ is satisfied. If $r$ does not divide the order of $x$, then the condition of~Theorem~\ref{t:ArithmCriterion}$(2)$ for $K$ is not stronger than the condition for $H$. Indeed, $l$ divides the order of $\beta$ modulo $V/(V\cap\la y^r\ra)$ and the Hall $\pi(x)$-subgroups of $\la y\ra$ and $\la y^r\ra$ coincide in this case.

Assume that $r\in\pi(x)$. Since $\beta\equiv 1\pmod r$, the groups $\overline V$  and $V/(V\cap\la y^r\ra)$ are isomorphic and $\ord_{|V:(V\cap\la y^r\ra)|}(\beta)=l$. If $b$ is either a positive automorphism of $W/\la y^r\ra$, or $l$ is even, then $K$ is relatively closed if $x^{\sigma(\beta, l)}\in\la y^r\ra$, which is equivalent to that $|y|/r$ is divisible by $|x|/(|x|,l)$, if $b$ is positive on $W/\la y^r\ra$, or by $|x|/(|x|,(\beta+1)l/(2, l))$, otherwise. Since $H$ is relatively closed, this provides an additional restriction only for the $r$-parts of these numbers. For the positive $b$, we have $|y|_r\min\{|x|_r, l_r\}>|x|_r$. If $l_r\geqslant |x|_r$, then $l_r|y|_r>|x|_r$ due to $|y|_r>1$. If $l_r<|x|_r$, then again $|y|_rl_r>|x|_r$ as in part~$(2b)$. In the other case, in the same way we get $|y|_r(\beta+1)_rl_r/(2, l)_r>|x|_r$ as in part~$(2a)$.

Assume that $b$ is a negative automorphism of $W/\la y^r\ra$ and $l$ is odd. Then there are two possibilities: $b$ is a positive automorphism of $\overline W$ and $r=2$, or $b$ is a negative automorphism of $\overline W$. Consider the case of a positive $b$ first. Since $H$ is relatively closed, we have $x^l\in\la y\ra$, and $K$ is relatively closed if  $x^{(\beta+1)l}\in\la y^2\ra$. Since $\beta+1$ is divisible by $4$, $K$ is relatively closed.

Assume that $b$ is a negative automorphism of $\overline W$. Then the corresponding conditions for $H$ and $K$ are $x^{(\beta+1)l}\in\la y\ra$ and $x^{(\beta+1)l}\in\la y^r\ra$. So $r$ must divide $|y|_r/|x^{(\beta+1)l}|_r$. This is equivalent to $|y|_r\min\{|x|_r, (\beta+1)_rl_r\}>|x|_r$. Hence if $|x|_r\leqslant (\beta+1)_rl_r$, then we get $|y|_r>1$ which is true. Otherwise, $|y|_r(b+1)_rl_r>|x|_r$ as in part~$(2a)$.

Assume that $K=\langle bxz, y^r\rangle$, where $z$ is the $r$-part of $y$, is a subgroup from Lemma~\ref{l:NormalFormInTheorem}$(3)$. It follows from Theorem~\ref{t:ArithmCriterion} that $K$ is relatively closed if and only of $\la b\ra$ contains $C_A(W/\la y^r\ra)$, which is true, and $(xz)^l$ or $(xz)^{(\beta+1)l/(l, 2)}$ belongs to $\la y^r\ra$ in the corresponding cases. Again $r\in\pi(l)$ and $z^l\in\la y^r\ra$. The similar condition for $x$ is satisfied, because $r\not\in\pi(x)$. Hence $K$ is relatively closed.

Assume that $K=\langle b^{n_\beta}x^{\sigma(\beta, n_\beta)}z, y^r\rangle$, where $z$ is the $r$-part of $y$, is the subgroup from  Lemma~\ref{l:NormalFormInTheorem}$(4)$. Let $n_1$ be the order of $a$ as an automorphism of $W/\la y^r\ra$, that is $C_A(W/\la y^r\ra)=\la a^{n_1}\ra$. Since $W/(W\cap\la y^r\ra)\simeq \overline W\times C_r$ and the order of $\overline W$ is not divisible by $r$, $n_1$ is the least common multiple of the coprime numbers $\ord_{|\ov W|}(\alpha)$ and ${n_\beta}$. For brevity, set $n_\alpha=\ord_{|\ov W|}(\alpha)$. Therefore, $n_1={n_\alpha}{n_\beta}$ and $\la b^{n_\beta}\ra$ contains the centralizer $C_A(W/\la y^r\ra)$.

Denote by $V_K$ the Hall $\pi(\beta^{n_\beta}-1)'$-subgroup of $W$. Let us prove that the order $l_1$ of $b^{n_\beta}$ on $V_K/(V_K\cap\la y^r\ra)$ is equal to $l$. Observe that $l$ divides ${n_\alpha}$, so ${n_\beta}$ and $l$ are coprime. By definition, $l$ is divisible by $\ord_s(\beta)$ for every $s\in\pi(\overline V)$. If $s\in\pi(\overline V)\setminus\pi(V_K)$, then $$\beta\not\equiv1\pmod s, \text{ but } \beta^{n_\beta}\equiv1\pmod s.$$ Therefore, $\ord_s(\beta)$ divides both ${n_\beta}$ and $l$, which is a contradiction. It follows that $\pi(\overline V)\subseteq\pi(V_K)$.  Since $V_K$ is a Hall subgroup of $W$, we have $V_K/(V_K\cap\la y\ra)\simeq \overline V$. It remains to note that $V_K\cap\la y\ra=V_K\cap\la y^r\ra$. Thus, $l_1=l/(l, {n_\beta})=l$.

Since $r$ is odd, the Sylow $2$-subgroups of $\overline W$ and $W/\la y^r\ra$ are isomorphic. So any automorphism of $W$ is positive or negative on $\overline W$ and $W/\la y^r\ra$ simultaneously.

It follows from Theorem~\ref{t:ArithmCriterion} that $K$ is relatively closed if and only if $(x^{\sigma(b, {n_\beta})}z)^{(\beta^{n_\beta}+1)l/(2, l)}\in\la y^r\ra$, provided $b^{n_\beta}$ is a negative automorphism of $W/\la y^r\ra$, and $(x^{\sigma(b, {n_\beta})}z)^{l}\in\la y^r\ra$, otherwise. Since $l$ is divisible by $r$, the element $z^r$ lies in $\la y^r\ra$. Now $x$ is an $r'$-element, and a power of $x$ lies in $\la y^r\ra$ if and only if it lies in $\la y\ra$.   Therefore, if $b$ and $b^{n_\beta}$ are either both positive or both negative on $\overline W$, then the conditions on $x$ directly follow from the relative closedness of~$H$.  Assume that $b$ is a negative automorphism and $b^{n_\beta}$ is a positive automorphism of $\overline W$. So ${n_\beta}$ is even and $\sigma(\beta, {n_\beta})$ is divisible by $\beta+1$. Since $H$ is relatively closed, $x^{(\beta+1)l/(2, l)}\in\la y\ra$. This implies that $x^{\sigma(b, {n_\beta})l}\in\la y\ra$. Thus, $K$ is relatively closed.

Assume that $K=\langle bxz, y^4\rangle$ is a subgroup from Lemma~\ref{l:NormalFormInTheorem}$(5)$. As before, the condition $C_A(W/\la y^4\ra)\leqslant \la b\ra$ is satisfied.  Therefore, $K$ is relatively closed if and only if $(xz)^{(\beta+1)l}\in\la y^4\ra$. Since $H$ is relatively closed, $x^{l}\in\la y\ra$. Moreover,  $x^{l}\in\la y^4\ra$, because the order of $x$ is odd. Since $\beta+1$ is divisible by $4$, the element $z^{\beta+1}$ lies in $\la y^4\ra$. Thus, $(xz)^{(\beta+1)l}\in\la y^4\ra$ and $K$ is relatively closed.
\end{proof}

\begin{lemm}\label{l:SubgroupsAreMaximal} All subgroups from parts $(1)$--$(5)$ of Theorem~{\rm \ref{t:ListOfMaximal}} are maximal relatively closed subgroups of~$H$.
\end{lemm}

\begin{proof}
 The subgroups from $(1)$--$(3)$ are maximal, because their index in $H$ is a prime.

Let $K=\la (bx)^{n_\beta}y, y^r\ra$ be a subgroup from $(4)$ and assume that $L$ is a proper relatively closed subgroup of $H$, containing $K$. Then $\rad(L)=\la y^r\ra$ or $\la y\ra$. Since ${n_\beta}$ is coprime to ${n_\alpha}$, we have $\langle b\rangle=\la b^{n_\beta}, C_A(\overline W)\ra$. Therefore, if $\rad(L)=\la y\ra$, then $L=H$.

Assume that $\rad(L)=\la y^r\ra$. Then $L=\la (bx)^{n_1}y^{m_1}, y^r\ra$ for some $n_1$ dividing ${n_\beta}$ and $m_1$ such that $y^{m_1}$ is an $r$-element. Since $(bx)^{n_\beta}y\in L$,  $$(bx)^{n_\beta}y=\left((bx)^{n_1}y^{m_1}\right)^{n_2}(y^r)^{n_3} \text{ for some integers } n_2 \text{ and } n_3.$$ Therefore, $n_1n_2$ must be equal to ${n_\beta}$ modulo the order of $b$. Hence $$\left((bx)^{n_1}y^{m_1}\right)^{n_2}=b^{n_\beta}x^{\sigma(\beta, {n_\beta})}(y^{m_1})^{\sigma(\beta^{n_1}, n_2)}=(bx)^{n_\beta}(y^{m_1})^{\sigma(\beta^{n_1}, n_2)}.$$ Thus, $$y=(y^{m_1})^{\sigma(\beta^{n_1}, n_2)}(y^r)^{n_3}.$$  Since ${n_\beta}$ is the order of $\beta$ modulo $r$, the number $\sigma(\beta^{n_1}, n_2)$ is divisible by $r$. So $(y^{m_1})^{\sigma(\beta^{n_1}, n_2)}$ is an element of $\la y^r\ra$. Therefore, $y$ is not a product of this element and $(y^r)^{n_3}$. So $K$ is maximal.

The subgroup $K=\la bxy, y^4\ra$ from $(5)$ has index $4$ in $H$. Since the image of $K$ in $A$ coincides with $\la b\ra$, a proper subgroup $L$ of $H$, containing $K$, must have the form $\la bxy', y^2\ra$, where $y'$ is either the identity, or is a generator of the Sylow $2$-subgroup of~$\la y\ra$. If $y'=1$, then $L$ does not contain $bxy$ and so does not include $K$. Assume that $y'\neq1$. Since $|W:\la y\ra|$ is odd, $b$ is a positive automorphism of $W/\la y^2\ra$. Therefore, $\rad(L)=\la (xy')^l, y^2\ra=\la y\ra$, because $l$ is odd. Thus, $L$ is not relatively closed.
\end{proof}

To prove the theorem, it remains to show that every proper relatively closed subgroup $L$ of $H$ is conjugate in $N$ into one of the subgroups $(1)$--$(5)$.

Let  $L=\langle (bx)^ky^m, y^t\rangle$ for some integers $k$, $m$ and $t$ such that $k$ divides $|b|$, $t$ divides $|y|$, and $L\cap W=\langle y^t\rangle$.

If there exists a prime divisor $r$ of $k$ from $\pi(|\langle b\rangle:C_A(\overline W)|)$, then $L$ is contained in $\langle (bx)^r, y\rangle$, which is a subgroup from part~$(1)$.

Hence we may assume that $\pi(k)\cap\pi(|\langle b\rangle:C_A(\overline W)|)=\varnothing$ which is equivalent to the statement that $\pi(k)\subseteq\pi(C_A(\overline W))$ and the Hall $\pi(k)$-subgroup of $C_A(\overline W)$ is a Hall subgroup of~$\la b\ra$. This fact implies several important corollaries:
\begin{itemize}
\item Since $l$ divides the index $|\langle b\rangle:C_A(\overline W)|$, the numbers $k$ and $l$ are coprime.

\item The elements $x$ and $x^{\sigma(\beta, k)}$ have the same order. Indeed, $x$ is a $\pi(l)$- or $\pi(2l)$-element by Corollary~\ref{c:xIsl-element}. If $x$ is a $\pi(l)$-element, then $\sigma(\beta, k)$ is coprime to $|x|$ and the claim follows. If $x$ is a $\pi(2l)$-element, then $b$ is a negative automorphism of $\overline W$. Hence $2$ divides the index $|\langle b\rangle:C_A(\overline W)|$. So $k$ is odd and $\sigma(\beta, k)$ and $|x|$ are coprime again.

\item $t\neq 1$, because $C_A(W/\la y^t\ra)$ does not contain $C_A(\overline W)$.
\end{itemize}

Denote by $V_L$, $U_L$, $(U_L)_1$, $(U_L)_2$ the corresponding Hall subgroups of $W$ defined by $L$, that is, $V_L$ is the maximal subgroup of $W$ such that $[b^k, V_L]=V_L$, the subgroup $U_L$ is the complement of $V_L$ and so on.

Assume that there exists $r\in\pi(t)$ such that $\beta^k$ is not $1$ modulo $r$ (so, in particular, $r$ is odd and is coprime to $|x|$), that is, $r$ is a prime divisor of $V_L$. Then $L$ is a subgroup of $L_1=\la (bx)^ky^{m}, y^r\ra$. The subgroup $L_1$ can be rewritten in the form $\la (bx)^kz, y^r\ra$, where $z$ is an $r$-element from $\la y\ra$. Then $z\in [b^k, \la z\ra]$. By Lemma~\ref{l:ConjugacyInN}, the subgroup $L_1$ is conjugate in $N$ to a subgroup $\la (bx)^k, y^r\ra$, which is contained in $K=\langle bx, y^r \rangle$. It remains to note that $K$ is a maximal relatively closed subgroup from part~$(2)$.

Henceforth we assume that $\beta^k$ is $1$ modulo every prime divisor of $t$. Take $r\in\pi(t)$. We have $$V_L\cap\la y^t\ra\simeq V_L\cap\la y^r\ra\simeq \overline V_L.$$  Put $M=\la (bx)^ky^m, y^r\ra$ and let $L_1$ be the relative closure of $M$  in~$G$. Denote by $l_1$ the order of $b^k$ on~$\overline V_L$ (or equivalently on $V_L/(V_L\cap\la y^t\ra)$).

Since the radicals of $L_1$ and $M$ are equal, Theorem~\ref{t:GeneralRadical} implies that
$$\rad(L_1)=\begin{cases} \la (x^{\sigma(\beta, k)}y^m)^{l_1}, y^r\ra, \text{ if } b^k \text{ is a positive automorphism of } W/\la y^r\ra;\\ \la (x^{\sigma(\beta, k)}y^m)^{(\beta^k+1)l_1/(2, l_1)}, y^r\ra, \text{ otherwise.}\end{cases}$$

Since $L$ is relatively closed, Theorem~\ref{t:ArithmCriterion} implies that
$$(x^{\sigma(\beta, k)}y^m)^{l_1}\in\la y^t\ra,$$
provided $b^k$ is a positive automorphism of $W/\la y^t\ra$, and $$(x^{\sigma(\beta, k)}y^m)^{(\beta^k+1)l_1/(2, l_1)}\in\la y^t\ra,$$ otherwise. Therefore, either $\rad(L_1)=\la y^r\ra$, or $b^k$ is a positive automorphism of $W/\la y^r\ra$ and a negative automorphism of $W/\la y^t\ra$. So $\rad(L_1)\neq\la y^r\ra$ implies that $t$ is divisible by $4$, $r=2$, the index of $\la y\ra$ in $W$ is odd, and $b^k$ is a negative automorphism of $W/\la y^4\ra$. In this case the radical of the relative closure of $\la (bx)^ky^m, y^4\ra$ is equal to $\la y^4\ra$. Therefore, in all the cases, we can embed $L$ into a proper relatively closed subgroup of $H$ such that the index of the radical of this larger subgroup in $\rad(H)$ is either a prime number, or is equal to $4$.

Thus, we may assume that either $t$ is a prime number, or $t=4$, $|W:\la y\ra|$ is odd, and $b^k$ is a negative automorphism of $W$ (that is, $b$ is a negative automorphism of $W$ and $k$ is odd). Since $t$ is a prime power, we may also assume that $y^m$ is either the identity, or a generator of the Sylow $\pi(t)$-subgroup of $\la y\ra$.

Assume that $t$ divides the order of $V$. In particular, $t$ is an odd prime. If $y^m=1$, then $L$ is contained in a relatively closed subgroup $\la bx, y^t\ra$ from part~$(2)$.

Assume that $y^m$ is a generator of the Sylow $t$-subgroup. Let us show that $\overline V=\overline V_L$. If $s\in\pi(\overline V)$, then $\beta\not\equiv 1\pmod s$ and $l$ is divisible by $\ord_s(\beta)$. The numbers $k$ and $l$ are coprime. Therefore, $b^k$ acts nontrivially on the subgroup of order $s$ in~$\overline V$. Hence $s\in\pi(V_L)$ and $\overline V\simeq \overline V_L$ as stated, because $V_L$ is a Hall subgroup of $W$. Moreover, $(k, l)=1$ leads to $l=l_1$.

Recall that $t\not\in\pi(V_L)$. Hence $t\not\in\pi(\overline V)$ and so $t\not\in\pi(\overline W)$. Also $$\beta^k\equiv1\pmod t, \text{ while } \beta\not\equiv1\pmod t.$$  Therefore, ${n_\beta}$ divides~$k$.

Since $L$ is relatively closed, $(x^{\sigma(\beta, k)}y^m)^l\in\la y^t\ra$, provided $b^k$ is a positive automorphism of $W/\la y^t\ra$, and $(x^{\sigma(\beta, k)}y^m)^{(\beta^k+1)l/(2, l)}\in\la y^t\ra$, otherwise. This yields $y^{l}\in\la y^t\ra$, because $x$ is a $t'$-element, $y^m$ is a $t$-element, and both $m$ and $\beta^k+1$ are coprime to $t$. Hence $l$ is divisible by~$t$.

As in the proof of Lemma~\ref{l:T71Closed}$(4)$, the index $|A:C_A(W/\la y^t\ra)|$ is equal to $[{n_\alpha}, {n_\beta}]$. Let $s$ be a divisor of $n_\alpha$ such that $b=a^s$. Then $\la b^k\ra$ contains $C_A(W/\la y^t\ra)$ if and only if $sk$ divides $[{n_\alpha}, {n_\beta}]$, or equivalently $k$ divides ${n_\alpha}/s\cdot {n_\beta}/({n_\alpha}, {n_\beta})$. The number ${n_\alpha}/s$ is equal to the index $|\la b\ra:C_A(\overline W)|$ which is coprime to $k$. Hence $k$ must divide ${n_\beta}/({n_\alpha}, {n_\beta})$, but ${n_\beta}$ divides $k$. So $({n_\alpha}, {n_\beta})=1$ and $k=n_\beta$. Therefore, $L$ has the form $\la (bx)^{n_\beta}y^m, y^t\ra$ as in Lemma~\ref{l:NormalFormInTheorem}. Lemma~\ref{l:ConjugacyInN} states that all subgroups of $H$ of such form are conjugate in~$N$. This completes the proof in the case $t\in\pi(V)$.

Assume that $t\in\pi(U)$. First, suppose that $t\in\pi(x)$. Since the order of $x^{\sigma(\beta, k)}$ is equal to the order of $x$ and $y^m$ is a $t$-element whose order is strictly less than $|x|_t$, there is an automorphism $\phi$ of $W$ which maps $x^{\sigma(\beta, k)}y^m$ to $x^{\sigma(\beta, k)}$. Moreover, since $x^\phi$ is the product of $x$ and some power of $y$, it follows that $H^\phi=H$. So up to conjugacy in $N$ one may assume that $y^m=1$.

Since $L$ is relatively closed, we have $C_{A}(W/\la y^t\ra)\leqslant\la b^k\ra$, and either $(x^{\sigma(b, k)})^l$ or $(x^{\sigma(b, k)})^{(\beta^k+1)l/(2, l)}$ in the corresponding cases lies in $\la y^t\ra$. The conditions on $x$ are equivalent to $x^l$ or $x^{(\beta+1)l/(2, l)}$ belongs to $\la y^t\ra$ respectively, because $x^{\sigma(b, k)}$ and $x$ have the same order and $$(|x|, \beta^k+1)=(|x|, \beta+1), \text{ if } k \text{ is odd.}$$  Therefore, the subgroup $K=\la bx, y^t\ra$ is relatively closed and contains~$L$. The groups of such form are listed in part~$(2)$. The conditions in $(2)$ are just arithmetic versions of the conditions $x^l\in \la y^t\ra$ and $x^{(\beta+1)l/(2, l)}\in \la y^t\ra$.

Assume that $t$ is a prime divisor of $|U_2|$. If $y^m=1$, then $L$ is contained in $\langle bx, y^t\rangle$ which is a relatively closed subgroup listed in~$(2)$. If $y^m\neq 1$, then $(x^{\sigma(\beta, k)}y^m)^l$ or $(x^{\sigma(\beta, k)}y^m)^{(\beta^k+1)l/(2, l)}$ lies in $\la y^t\ra$. Hence either $l$ is divisible by $t$, or $t=2$ and $b^k$ is a negative automorphism of $W/\la y^2\ra$. In the latter case $k$ is odd. So $k$ is coprime to $t$ anyway. Therefore, there exists $z\in\la y^m\ra$ such that $(bx)^ky^m=(bxz)^k$. It follows that $L$ is a subgroup of a relatively closed group $K=\la bxz, y^t\ra$. If $|z|>|[b, W]|_t$, then $K$ is in the normal form and it is conjugate in $N$ to a subgroup from part~$(3)$.  Otherwise there is an element $v\in U_2$ such that $[b, v]=z^{-1}$ (observe that if $t=2$ and $b^k$ is a negative automorphism of $W/\la y^2\ra$, then $|W:\la y\ra|_2\geqslant 2$ and $|z|_2\leqslant|[b, W]|_t$).. So $\la bxz, y^t\ra^v=\la bx, y^t\ra$ and $H^v=\la bx(z)^{-1}, y\ra=\la bx, y\ra=H$. Hence $L$ is conjugate in $N$ to a subgroup from part~$(2)$ in this case.

Assume that $t=4$. Recall that in this case $|\overline W|$ is odd, $b^k$ is a negative automorphism of $W/\la y^4\ra$ and $l=l_1$. If $y^m=1$, then $L$ is contained in $\la bx, y^2\ra$ which is relatively closed subgroup from part~$(2)$. Hence we may assume that $y^m$ is a generator of the Sylow $2$-subgroup of $\la y\ra$ and so of $W$. Since $k$ is odd, there is $z\in\la y^m\ra$ such that $(bx)^ky^m=(bxz)^k$. If $l$ is even, then $L$ is conjugate in $N$ to a subgroup of $\la bxz, y^2\ra$ which is a subgroup from $(3)$. If $l$ is odd, then $L$ is conjugate in $N$ to a subgroup of a relatively closed subgroup $\la bxz, y^4\ra$ which is from part~$(5)$.
\end{proof}

\section{Maximal and second maximal subgroups}\label{s:max}

In this section we apply Theorem~\ref{t:ListOfMaximal} to classify all maximal intransitive, that is, maximal proper relatively closed subgroups, and second maximal relatively closed subgroups of~$G$. As a corollary we get a classifications of relatively closed subgroups with exactly three orbits.

\begin{theo}\label{t:MaximalIntransitive}
Let $G=\langle a, w\rangle$ be a permutation group with a cyclic regular normal subgroup $W=\langle w\rangle$ of order $n$ and a cyclic point stabilizer $A=\langle a \rangle$. Let $\alpha$ be an integer such that $w^a=w^\alpha$. Then every maximal intransitive subgroup of $G$ is conjugate in $G$ to exactly one of the following maximal intransitive subgroups:
\begin{itemize}
\item[$(1)$] $M(r)=\langle a, w^r \rangle$, where $r\in\pi(n)$;

\item[$(2)$] $P=\langle aw,  w^4 \rangle$, provided $a$ is a negative automorphism of $W$.
\end{itemize}
In particular, $M(r)$ has $\frac{r-1}{\ord_r(\alpha)}$ orbits of size $\frac{n\ord_r(\alpha)}{r}$ and one orbit of size $\frac{n}{r}$, while $P$ has two orbits of size $\frac{n}{2}$.
\end{theo}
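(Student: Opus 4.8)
The plan is to derive the theorem from Theorem~\ref{t:ListOfMaximal} applied to the relatively closed subgroup $H=G$, after identifying the maximal intransitive subgroups of $G$ with the maximal proper relatively closed subgroups. First I would establish this identification. If $H\leqslant G$ is intransitive, then its relative closure has the same orbits and is therefore also intransitive; hence a maximal intransitive subgroup is relatively closed. Conversely, a relatively closed subgroup that is transitive must equal $G$: a single orbit gives $\rad(H)=W$, which by Lemma~\ref{l:Radical=Intersection} forces $H\cap W=W$, and then $C_A(W/(H\cap W))=A\leqslant H$ by Theorem~\ref{t:RadIsIntersectionIsClosed}, so $H=AW=G$. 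Combining these, the maximal intransitive subgroups are precisely the maximal proper relatively closed subgroups, which are exactly the maximal relatively closed subgroups of $H=G$ furnished by Theorem~\ref{t:ListOfMaximal}.

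Next I would specialize the five items. Writing $G$ in the normal form as $\la bx,y\ra$ with $b=a$, $x=1$, $y=w$, we get $\ov W=W/(H\cap W)=1$, so $\beta=\alpha$, $l=1$, $C_A(\ov W)=A=\la b\ra$, and $\pi(|\la b\ra:C_A(\ov W)|)=\varnothing$. Therefore Items~$(1)$, $(3)$ and $(4)$ are vacuous, since the admissible primes $r$ range over $\pi(|\la b\ra:C_A(\ov W)|)=\varnothing$ in $(1)$ and over sets containing $\pi(l)=\varnothing$ in $(3)$ and $(4)$. Item~$(2)$, with $x=1$ imposing no extra condition, yields $\la a,w^r\ra=M(r)$ for every $r\in\pi(y)=\pi(n)$, and Item~$(5)$, whose hypotheses $|\ov W|$ odd and $l$ odd hold automatically, yields $\la aw,w^4\ra=P$ exactly when $b=a$ is negative. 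This recovers the two families, and each is a maximal relatively closed, hence maximal intransitive, subgroup by the last assertion of Theorem~\ref{t:ListOfMaximal}.

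It remains to pass from conjugacy in $N=N_{\hol(W)}(G)$ to conjugacy in $G$; this is the main point requiring care. I would first check $N=\hol(W)$: the abelian group $\aut(W)$ contains $A$, hence centralizes $A$ and normalizes $W$, so it normalizes $G=AW$, and together with $W\leqslant G$ this gives $\hol(W)\leqslant N\leqslant\hol(W)$. Now $\aut(W)$ normalizes each $M(r)$, because any $\psi\in\aut(W)$ fixes $a$ and sends $w^r$ to a generator of $\la w^r\ra$; thus every $\hol(W)$-conjugate of $M(r)$ is already a $W$-conjugate, and as $W\leqslant G$ the $N$-conjugacy class of $M(r)$ coincides with its $G$-conjugacy class. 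For $P$ one computes $P^\psi=\la aw^j,w^4\ra$ with $(j,n)=1$; since $\alpha\equiv-1\pmod 4$, conjugating by $w^i\in W$ replaces the exponent $1$ of $w$ by $1+i(1-\alpha)\equiv 1+2i\pmod 4$, realizing every odd residue and hence every such $j$, so $P^\psi$ is a $W$-conjugate of $P$ and again the $N$- and $G$-classes agree. Distinctness (``exactly one'') is immediate, as $|H\cap W|$ is a conjugacy invariant and equals $n/r$ for $M(r)$ and $n/4$ for $P$, which are pairwise distinct.

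Finally, for the orbit counts I would use that $\rad(H)=H\cap W$ for relatively closed $H$, so that by Lemma~\ref{l:QuotientByRadical}$(3)$ the orbits on $W$ correspond bijectively to the orbits on $W/(H\cap W)$, with lengths multiplied by $|H\cap W|$. For $M(r)$ the induced action on $W/\la w^r\ra\simeq C_r$ is through $\la a\ra$ as multiplication by $\alpha$: the fixed point gives an orbit of length $1$ lifting to $\la w^r\ra$ of size $n/r$, while the $r-1$ nonzero residues split into $\frac{r-1}{\ord_r(\alpha)}$ orbits of length $\ord_r(\alpha)$, lifting to orbits of size $\frac{n\,\ord_r(\alpha)}{r}$. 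For $P$ the induced action on $W/\la w^4\ra\simeq C_4$ is through $aw$ as the affine map $t\mapsto\alpha t+1\equiv 1-t\pmod 4$, with the two orbits $\{0,1\}$ and $\{2,3\}$ of length $2$, each lifting to an orbit of size $\frac n2$, as claimed.
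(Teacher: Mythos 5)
Your proposal is correct and follows essentially the same route as the paper: specialize Theorem~\ref{t:ListOfMaximal} to $H=G$ (where $\overline W=1$, $l=1$, $C_A(\overline W)=A$ kill Items (1), (3), (4)), upgrade conjugacy from $N_{\hol(W)}(G)=\hol(W)$ to $G$ by observing that $\aut(W)$-conjugates of $M(r)$ and $P$ are already $W$-conjugates, and read off the orbits via the quotient $W/(H\cap W)$. Your explicit justification that maximal intransitive subgroups coincide with maximal proper relatively closed subgroups, and the mod-$4$ exponent computation for $P$, are slightly more detailed variants of what the paper leaves implicit, but the argument is the same.
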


\begin{proof} Let us first prove that every maximal intransitive subgroup of $G$ is conjugate to one of the subgroups from the statement of the theorem in $N_{\hol(W)}(G)$ which is just the whole $\hol(W)$. If $H$ is maximal intransitive subgroup of $G$, then it is maximal relatively closed and it is one of the subgroups listed in Theorem~\ref{t:ListOfMaximal}. Since $C_A(W/W)=A$, no subgroups from part $(1)$ appear in this case. Part $(2)$ of the theorem provides groups $M(r)$. Parts $(3)$ and $(4)$ involve prime divisors of $l$, which is $1$ here. Finally, $P$ is exactly the subgroup from part~$(5)$.

Assume that $H$ is conjugate to $M(r)$ by an element $t\in\hol(W)$. Since $M(r)$ is invariant under the action of any element from $\aut(W)$, we may assume that $t\in W$. So $t\in G$ as required. Assume that $H=P^t$  for $t\in\hol(W)$. By Lemma~\ref{l:NormalFormInTheorem}, the normal form of $P$ is $\la bz, w^4\ra$, where $z$ is a generator of the Sylow $2$-subgroup of $W$. Then $H=\la az^t, w^4\ra^v$ for some $v\in W$. Hence it is sufficient to prove that $\la az^t, w^4\ra$ is conjugate to $P$ in $G$. Every generator of the Sylow $2$-subgroup of $W$ lies in one of the cosets $z\la z^4\ra$ or $z^3\la z^4\ra$. Since $a$ permutes these classes, either $P^t=P$, or $P^{ta}=P$.

Finally, the orbits of $M(r)$  on $W$ are in a one-to-one correspondence with the orbits of $A$ on $W/\la w^r\ra$. The orbits of $P$ are in a one-to-one correspondence with the orbits of $P$ on $W/\la w^4\ra$ and the statement on the length of orbits follows.
\end{proof}

Denote by $$\ldb n_1\times m_1, \dots, n_s\times m_s\rdb$$ a multiset consisting of elements $m_1$, $\dots$, $m_s$ with multiplicities $n_1$, $\dots$, $n_s$ (if $n_i=1$ for some $i$, then we sometimes will write just $m_i$ instead of $1\times m_i$).

For a subgroup $H$ of $G$, we refer to the multiset of the lengths of orbits of $H$ on $W$ as~$\Omega(H)$.

\begin{theo}\label{t:SecondMaximal} Let $G=\langle a, w\rangle$ be a permutation group with a cyclic regular  normal  subgroup $W=\langle w\rangle$ of order $n$ and a cyclic point stabilizer $A=\langle a \rangle$. Let $\alpha$ be an integer such that $w^a=w^\alpha$. Let $H$ be a relatively closed subgroup of $G$, which is second maximal in the lattice of proper relatively closed subgroups of~$G$. Then $H$ is conjugate in $\hol(W)$ to exactly one of the following subgroups:
\begin{itemize}
\item[$(1)$] $H_1=\langle a^s, w^r\rangle$, where $r\in\pi(n)$ and $s\in\pi(\ord_r(\alpha))$;

\item[$(2)$] $H_2=\langle a, w^{sr}\rangle$, where $s,r\in\pi(n)$ and $sr$ divides $n$;

\item[$(3)$] $H_3=\langle a^{\ord_s(\alpha)}w^r, w^{sr}\rangle$, where $r\in\pi(n)$, $s\in\pi(n)\cap\pi(\ord_r(\alpha))$,  $(\ord_r(\alpha), \ord_s(\alpha))=1$;

\item[$(4)$] $H_4=\langle aw^r, w^{4r}\rangle$, where $r\in\pi(n)\setminus\{2\}$, provided $4$ divides $n$ and $\alpha\equiv -1\pmod 4$;

\item[$(5)$] $H_5=\langle a^2, w^{4}\rangle$, provided $4$ divides $n$ and $\alpha\equiv -1 \pmod4$;

\item[$(6)$] $H_6=\langle aw, w^{8}\rangle$, provided $8$ divides $n$ and $\alpha\equiv -1 \pmod8$.

\end{itemize}

All subgroups in $(1)$--$(6)$ are second maximal relatively closed subgroups of $G$. The orbit lengths are as follows:
\begin{itemize}
\item $\Omega(H_1)=\Omega(H_3)=\ldb\frac{n}{r}, \frac{s(r-1)}{\ord_r(\alpha)}\times\frac{n}{r}\frac{\ord_r(\alpha)}{s}\rdb$;

\item $\Omega(H_2)=\ldb\frac{n}{r^2}, \frac{r-1}{\ord_r(\alpha)}\times\frac{n}{r^2}\ord_r(\alpha), \frac{r(r-1)}{\ord_{r^2}(\alpha)}\times\frac{n}{r^2}\ord_{r^2}(\alpha)\rdb$ for $s=r$;

\item$\Omega(H_2)=\ldb\frac{n}{sr}, \frac{(r-1)}{\ord_r(\alpha)}\times\frac{n}{sr}\ord_r(\alpha), \frac{(s-1)}{\ord_s(\alpha)}\times\frac{n}{sr}\ord_s(\alpha), \frac{(s-1)(r-1)}{\ord_{sr}(\alpha)}\times\frac{n}{sr}\ord_{sr}(\alpha)\rdb$ for $s\neq r$;

\item $\Omega(H_4)=\ldb2\times\frac{n}{2r}, \frac{2(r-1)(2, \ord_r(\alpha))}{\ord_r(\alpha)}\times\frac{n}{2r}\frac{\ord_r(\alpha)}{(2, \ord_r(\alpha))}\rdb$;

\item $\Omega(H_5)=\Omega(H_6)=\ldb4\times\frac{n}{4}\rdb$.
\end{itemize}
\end{theo}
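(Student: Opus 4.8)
The plan is to combine the two main structural results already established: the classification of the maximal (equivalently, maximal intransitive) relatively closed subgroups of $G$ in Theorem~\ref{t:MaximalIntransitive}, and the description of maximal relatively closed subgroups of a given relatively closed subgroup in Theorem~\ref{t:ListOfMaximal}. By definition a second maximal subgroup is a maximal relatively closed subgroup of some maximal relatively closed subgroup $M$ of $G$; moreover, as noted at the start of Section~\ref{s:lattice}, the relatively closed subgroups of $M$ are exactly the relatively closed subgroups of $G$ contained in $M$. Hence it suffices to run Theorem~\ref{t:ListOfMaximal} over the two families $M=M(r)=\langle a,w^r\rangle$ and $M=P=\langle aw,w^4\rangle$ and then to organize the output up to $\hol(W)$-conjugacy.

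First I would record the normal forms and parameters of the two maximal subgroups. For $M(r)$ one has $b=a$, $x=1$, $y=w^r$, so $\overline W=W/\langle w^r\rangle\cong C_r$, $C_A(\overline W)=\langle a^{\ord_r(\alpha)}\rangle$ and $l=\ord_r(\alpha)$. The group $P$ exists precisely when $a$ is negative, i.e.\ $4\mid n$ and $\alpha\equiv-1\pmod4$; by Lemma~\ref{l:NormalFormInTheorem} its normal form is $\langle az,w^4\rangle$ with $z$ a generator of the Sylow $2$-subgroup of $W$, so $b=a$, $x=z$, $y=w^4$, $\overline W\cong C_4$, $C_A(\overline W)=\langle a^2\rangle$, $l=1$, and $a$ is negative on $\overline W$.

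Then I would feed each maximal subgroup into Theorem~\ref{t:ListOfMaximal}, reading off normal forms via Lemma~\ref{l:NormalFormInTheorem}. For $M(r)$: item~(1) gives $\langle a^s,w^r\rangle=H_1$ with $s\in\pi(\ord_r(\alpha))$; item~(2) gives $\langle a,w^{sr}\rangle=H_2$; items~(3) and~(4) together give $\langle a^{\ord_s(\alpha)}w^r,w^{sr}\rangle=H_3$, the case $\ord_s(\alpha)=1$ coming from~(3) (exactly when $s\mid\alpha-1$) and the case $\ord_s(\alpha)>1$ from~(4); item~(5), when applicable, also yields $\langle aw^r,w^{4r}\rangle=H_4$. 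For $P$: item~(1) produces $\langle a^2,w^4\rangle=H_5$, while item~(2) produces $\langle az,w^{4r}\rangle=H_4$ for odd primes $r$ and, for the prime $2$, the group $\langle aw,w^8\rangle=H_6$. In the latter case the closedness condition~(2a) unwinds to $(n_2/4)(\alpha+1)_2>n_2$, i.e.\ $(\alpha+1)_2\geq8$, which together with $2\in\pi(w^4)$ forces $8\mid n$ and $\alpha\equiv-1\pmod8$, explaining the hypotheses on $H_6$. Items~(3)--(5) for $P$ are vacuous since $l=1$ and $|\overline W|$ is even. Thus exactly the six families $H_1,\dots,H_6$ arise.

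To finish I would verify completeness and non-redundancy and then compute orbit lengths. Completeness is immediate, as every second maximal subgroup is maximal in some $M(r)$ or $P$. For the conjugacy count I would compare the triples $(|b|,|x|,|y|)$ of the normal forms and invoke Lemma~\ref{l:ConjugacyOfNormalForms}: this separates the six families and their parameters, reconciles the two derivations of $H_4$ (both give the triple $(|a|,n_2,n/4r)$), and shows that, although $\Omega(H_1)=\Omega(H_3)$ and $\Omega(H_5)=\Omega(H_6)$, the subgroups in each coinciding pair are genuinely non-conjugate (their $|y|$ differ). Since each $H_i$ is a proper subgroup sitting below a maximal subgroup, none is itself maximal, so all are second maximal. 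The orbit lengths then follow from $\rad(H_i)=H_i\cap W$ together with the quotient bijection of Lemma~\ref{l:QuotientByRadical}(3), reducing each count to the cyclic action on $W/(H_i\cap W)$ and applying Propositions~\ref{p:OrbitsOfUnipotentAutomorphims} and~\ref{p:GeneralOrbitOfCyclicGroup} over the Sylow decomposition. The step I expect to be the main obstacle is the bookkeeping of the positive/negative status of $a$ under the successive quotients, and the attendant $2$-adic conditions ($\alpha\bmod4$, $\alpha\bmod8$, $(\alpha+1)_2$): these govern both the very existence of $H_4,H_5,H_6$ and the factor $(2,\ord_r(\alpha))$ in $\Omega(H_4)$, where the $C_4\times C_r$ interaction doubles or halves the number of orbits according to the parity of $\ord_r(\alpha)$.
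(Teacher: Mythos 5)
Your proposal is correct and follows essentially the same route as the paper: feed the two maximal subgroups $M(r)$ and $P$ from Theorem~\ref{t:MaximalIntransitive} into Theorem~\ref{t:ListOfMaximal}, identify the resulting normal forms via Lemma~\ref{l:NormalFormInTheorem} (with items (3) and (4) merging into $H_3$ according to whether $\ord_s(\alpha)=1$, item (5) of the list being vacuous for $P$ and optional for $M(r)$ since $H_4$ already arises inside $P$, and the $r=2$ case of item (2) for $P$ unwinding to the $\alpha\equiv-1\pmod 8$ condition for $H_6$), separate conjugacy classes by Lemma~\ref{l:ConjugacyOfNormalForms}, and compute orbit lengths through the quotient bijection and Propositions~\ref{p:OrbitsOfUnipotentAutomorphims} and~\ref{p:GeneralOrbitOfCyclicGroup}. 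This matches the paper's proof in both structure and in all the case-by-case details you spell out.
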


\begin{proof}
Let $M$ be a maximal relatively closed subgroup and $H$ a maximal relatively closed subgroup of $M$, that is, a second maximal relatively closed subgroup of $G$. Then $M$ is conjugate in $G$ to one of the subgroups presented in Theorem~\ref{t:MaximalIntransitive}. As always $\overline{\phantom{g}}$ is a natural homomorphism from $W$ to $W/(H\cap W)$.

Assume that $M=M(r)$ for a prime $r$. Then $H$ is conjugate in $N=N_{\hol(W)}(M)$ to one of the subgroups from Theorem~\ref{t:ListOfMaximal}. The subgroup from Theorem~\ref{t:ListOfMaximal}$(1)$ is determined by a prime divisor of $|A:C_A(W/\langle w^r\rangle)|$. This index is equal to $\ord_r(\alpha)$. Hence  the subgroup is conjugate to a subgroup from part~$(1)$ in this case. The orbits of $\la a^s, w^r\ra$ are in a one-to-one correspondence with the orbits of $\la a^s\ra$ on $\overline W$ and the length of every orbit on $W$ is $n/r$ times larger than the length of the corresponding orbit on $\overline W$.

A subgroup from part $(2)$ is the subgroup from  Theorem~\ref{t:ListOfMaximal}$(2)$. The orbits in this case are in a one-to-one correspondence with the orbits of $A$ on $\overline W$, which can be easily determined.

In the case $s\in\pi(\alpha-1)$ which is equivalent to $\ord_s(\alpha)=1$, a subgroup from part $(3)$ is the subgroup from  Theorem~\ref{t:ListOfMaximal}$(3)$. Indeed, if $r\in\pi(\alpha-1)$, then the parameter $l$ for $M$ is $1$, and a subgroup from  Theorem~\ref{t:ListOfMaximal}$(3)$ is not defined. Otherwise $l=\ord_r(\alpha)$ and $s$ is chosen in $\pi(l)$. In this case the orbits of $H$ are in a one-to-one correspondence with the orbits of $\la aw^{r}\ra$ on $\overline W$. Lemma~\ref{l:OrbitsOfCyclicGroup} implies that $\la \overline w^r\ra$ is an orbit. The orbits of $A$ on $\la \overline w^s\ra$ are: the identity and $(r-1)/\ord_r(\alpha)$ orbits of length $\ord_r(\alpha)$. Therefore, it follows from Proposition~\ref{p:GeneralOrbitOfCyclicGroup} that $\la aw^{r}\ra$ has one orbit of length $s$ and $s(r-1)/\ord_r(\alpha)$ orbits of length $\ord_r(\alpha)$ on $\overline W$. All orbits of $H$ are just $n/{sr}$ times larger.

In the case $s\in\pi(\alpha-1)'$ which is equivalent to $\ord_s(\alpha)>1$, a subgroup from part $(3)$ is the subgroup from  Theorem~\ref{t:ListOfMaximal}$(4)$. Since this item of Theorem~\ref{t:ListOfMaximal} requires a prime divisor of $l$, the prime $r$ cannot divide $\alpha-1$. Hence $\ord_{|\overline W|}(\alpha)=\ord_r(\alpha)=l$ and $s\in\pi(l)$. Also the condition $({\ord_s(\alpha)}, \ord_{|\overline W|}(\alpha))=1$ gives the condition that orders of $\alpha$ modulo $s$ and $r$ are coprime. To determine $\Omega(H)$, let us consider the orbits of $\la a^{\ord_s(\alpha)}\overline w^r\ra$ on $\overline W$. Since $s$ divides $\alpha^{\ord_s(\alpha)}-1$, Proposition~\ref{p:OrbitsOfUnipotentAutomorphims} implies that $\la \overline w^r\ra$ is an orbit. Since $\alpha^{\ord_s(\alpha)}-1$ is not divisible by $r$, the subgroup $\la \overline w^s\ra$ is the Hall $\pi(\alpha^{\ord_s(\alpha)}-1)'$-subgroup~$V$. It follows from Proposition~\ref{p:GeneralOrbitOfCyclicGroup}  that $\la a^{\ord_s(\alpha)}\overline w^r\ra$ has one orbit of length $s$ and $s(r-1)/\ord_r(\alpha)$ orbits of size $\ord_r(\alpha)$.

A subgroup from part $(4)$ can be obtained as the subgroup from  Theorem~{\ref{t:ListOfMaximal}}$(5)$ but but only under condition that $\ord_r(\alpha)$ is odd. We prefer the alternative way to obtain this subgroup without any additional conditions as a subgroup of $M=P$.

Assume now that $M=P$. Since the index of $C_A(W/\langle w^4\rangle)$ in $A$  is $2$, Theorem~\ref{t:ListOfMaximal}$(1)$ provides the subgroup $\langle a^2w^{\alpha+1}, w^4\rangle$. Since $\alpha+1$ is divisible by $4$, this subgroup coincides with the subgroup from part~$(5)$. The orbits are cosets of $\la w^4\ra$ in $W$.

Let $H$ be conjugate to a subgroup from Theorem~\ref{t:ListOfMaximal}$(2)$. First, assume that $r$ is odd. The normal form of $P$ is $\langle au, w^4 \rangle$, where $u$ is a generator of the Sylow $2$-subgroup of~$W$. So the corresponding subgroup is $\langle au, w^{4r}\rangle$, which is conjugate in $\hol(W)$ to $\langle aw^r, w^{4r}\rangle$ as in part~$(4)$.

Consider the orbits of $H$. If $r$ divides $\alpha-1$, then $\la a\overline w^r\ra$ has $2r$ orbits of length $2$ on $\overline W$ by Proposition~\ref{p:OrbitsOfUnipotentAutomorphims}. If $r$ does not divide $\alpha-1$, then $\la \overline w^r\ra$ is a union of $2$ orbits of length $2$. It follows from Proposition~\ref{p:GeneralOrbitOfCyclicGroup} that $\la a\overline w^r\ra$ has $2$ orbits of length $2$ and $2(r-1)(2, \ord_r(\alpha))/\ord_r(\alpha)$ orbits of length $2\ord_r(\alpha)/(2, \ord_r(\alpha))$ (observe that these numbers coincide with the numbers from the previous case, provided $\ord_r(\alpha)=1$).

If $r=2$, then the corresponding subgroup is $\langle aw, w^8\rangle$ with the condition $(|w^4|(\alpha+1))_2>|w|_2$. So $\alpha\equiv -1\pmod8$ as in part~$(6)$. Proposition~\ref{p:OrbitsOfUnipotentAutomorphims} implies that $\la a\overline w\ra$ has $4$ orbits of length $2$ on $\overline W$.

Parts $(3)$ and $(4)$ of Theorem~\ref{t:ListOfMaximal} require prime divisors of $l$ which is $1$ here. In  Theorem~\ref{t:ListOfMaximal}$(5)$ the index of $\la y\ra$ in $W$ must be odd, but it is $4$.

The fact that distinct subgroups listed in the theorem are not conjugate in $\hol(W)$ directly follows from Lemma~\ref{l:ConjugacyOfNormalForms}.
\end{proof}

\begin{corl}\label{c:RankFour} Let $G=\langle a, w\rangle$ be a permutation group with a  cyclic regular normal  subgroup $W=\langle w\rangle$ of order $n$ and a cyclic point stabilizer $A=\langle a \rangle$. Let $\alpha$ be an integer such that $w^a=w^\alpha$. Let $H$ be a relatively closed subgroup of $G$. If $H$ has three orbits on $W$, then $H$ is conjugate in $\hol(W)$ to  exactly one of the following relatively closed subgroups:
\begin{itemize}
\item[$(1)$] $H_1=\langle a, w^r\rangle$, where $r\in\pi(n)\setminus\{2\}$ and $\ord_r(\alpha)=\frac{r-1}{2}$;

\item[$(2)$] $H_2=\langle a^2, w^r\rangle$, where $r\in\pi(n)\setminus\{2\}$ and $\ord_r(\alpha)=r-1$;

\item[$(3)$] $H_3=\langle a, w^{r^2}\rangle$, where $r\in\pi(n)$, $r^2$ divides $n$, and $\ord_{r^2}(\alpha)=r^2-r$;

\item[$(4)$] $H_4=\langle aw^r, w^{2r}\rangle$, where $r\in\pi(n)\setminus\{2\}$ and $\ord_r(\alpha)=r-1$, provided $\alpha$ is odd and $n$ is even.
\end{itemize}
The lengths of orbits of these subgroups are as follows:
\begin{itemize}
\item $\Omega(H_1)=\Omega(H_2)=\Omega(H_4)=\ldb\frac{n}{r}, 2\times\frac{n(r-1)}{2r}\rdb$;
\item $\Omega(H_3)=\ldb\frac{n}{r^2}, \frac{n(r-1)}{r^2}, \frac{n(r^2-r)}{r^2}\rdb$.
\end{itemize}
\end{corl}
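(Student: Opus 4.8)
The plan is to locate a three-orbit relatively closed subgroup precisely in the lattice of relatively closed subgroups and then read off the answer from Theorems~\ref{t:MaximalIntransitive} and~\ref{t:SecondMaximal}. The starting point is the fact, immediate from the definition of relative closure, that a relatively closed subgroup is determined by its partition into orbits: if $H\leqslant K$ are relatively closed, then the $K$-orbits are unions of $H$-orbits, so $H$ has at least as many orbits as $K$, with equality forcing $\orb(H,W)=\orb(K,W)$ and hence $H=K$. Since $G$ is transitive and is the only relatively closed subgroup with a single orbit, every proper relatively closed subgroup is intransitive, and, the lattice being finite, each is contained in a maximal one (a maximal intransitive subgroup).

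First I would show that a relatively closed $H$ with exactly three orbits is either maximal intransitive or second maximal in $G$. If $H$ is not maximal intransitive, choose a maximal intransitive $M$ with $H$ properly contained in $M$. By the monotonicity above, $M$ has fewer than three orbits, hence exactly two, being intransitive. Now take a relatively closed $H'$ maximal subject to $H\leqslant H'$ and $H'$ proper in $M$; then $H'$ is a maximal relatively closed subgroup of $M$, so it has more than two orbits, while $H\leqslant H'$ forces at most three. Thus $H'$ has exactly three orbits, and the equality case of the monotonicity gives $H=H'$. So $H$ is a maximal relatively closed subgroup of $M$, that is, second maximal in $G$.

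It then remains to sift the explicit lists. From the orbit counts in Theorem~\ref{t:MaximalIntransitive}, $M(r)$ has $1+\tfrac{r-1}{\ord_r(\alpha)}$ orbits and $P$ has two, so the only maximal intransitive subgroup with three orbits is $M(r)$ with $\ord_r(\alpha)=\tfrac{r-1}{2}$, which is $H_1$. For the second maximal subgroups I would impose ``three orbits'' on each multiset of Theorem~\ref{t:SecondMaximal}, using throughout that every ratio $\tfrac{m-1}{\ord_m(\alpha)}$ is a positive integer since $\ord_m(\alpha)\mid m-1$. For $\langle a^s,w^r\rangle$ the count $1+\tfrac{s(r-1)}{\ord_r(\alpha)}$ equals three exactly when $s=2$ and $\ord_r(\alpha)=r-1$, giving $H_2$. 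For $\langle a,w^{sr}\rangle$ the case $s\neq r$ has count at least four, whereas $s=r$ forces $\ord_{r^2}(\alpha)=r^2-r$ (and then $\ord_r(\alpha)=r-1$ automatically), giving $H_3$. For $\langle a^{\ord_s(\alpha)}w^r,w^{sr}\rangle$, whose orbit multiset coincides with that of $\langle a^s,w^r\rangle$, the same count forces $s=2$, whence $\ord_s(\alpha)=1$ and the coprimality condition of Theorem~\ref{t:SecondMaximal}$(3)$ holds trivially, so the subgroup collapses to $\langle aw^r,w^{2r}\rangle$, which is $H_4$. The remaining families of Theorem~\ref{t:SecondMaximal} have counts of the form $2+$(positive even integer) or exactly four, hence never three. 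Verifying that these side conditions specialise correctly at $s=2$ is the only delicate bookkeeping.

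Finally, the displayed orbit lengths are obtained by substituting the selected parameter values into the orbit-length formulas of the two theorems, and the pairwise non-conjugacy in $\hol(W)$ follows from Lemma~\ref{l:ConjugacyOfNormalForms}: the four subgroups have distinct triples of normal-form invariants (for instance $H_1$ and $H_2$ share the same orbit lengths but differ in the order of the image in $A$, while $H_3$ and $H_4$ have intersections with $W$ of different index). The main obstacle is the reduction step: one must be certain that the squeeze between two and three orbits pins $H$ down as a maximal relatively closed subgroup of a two-orbit maximal intransitive subgroup rather than something deeper in the lattice. Once this is secured, the rest is the routine selection sketched above.
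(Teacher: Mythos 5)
Your proposal is correct and follows essentially the same route as the paper: reduce to the lists of Theorems~\ref{t:MaximalIntransitive} and~\ref{t:SecondMaximal} and select the entries whose orbit multisets have cardinality three. The only difference is that you spell out, via the orbit-count monotonicity for nested relatively closed subgroups, why a three-orbit relatively closed subgroup must be maximal intransitive or second maximal --- a step the paper's proof asserts without justification --- and this squeeze argument is sound.
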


\begin{proof}
Since $H$ has three orbits, $H$ is either maximal intransitive, or second maximal relatively closed subgroup of $G$. So it must be conjugate to one of the groups in Theorems~\ref{t:MaximalIntransitive} or~\ref{t:SecondMaximal}. If $H$ is maximal, then it is equal to $M(r)$, because $P$ has $2$ orbits. The number of orbits of $M(r)$ is equal to $1+(r-1)/\ord_r(\alpha)$ by Theorem~\ref{t:MaximalIntransitive}. Hence $r$ is odd  and $\ord_r(\alpha)=(r-1)/2$. This situation is described in part~$(1)$.

Assume that $H$ is second maximal.  If $H$ is a subgroup from Theorem~\ref{t:SecondMaximal}$(1)$, then the number of orbits equals $1+s(r-1)/\ord_r(\alpha)$. Hence $\ord_r(\alpha)=r-1$ and $s=2\in\pi(r-1)$, in particular, $r$ is odd. This gives the subgroup from part~$(2)$.

If $s\neq r$ in Theorem~\ref{t:SecondMaximal}$(2)$, then there are at least $4$ orbits. Hence $s=r$ and $\ord_{r^2}(\alpha)=r^2-r$, this gives the subgroup from part~$(3)$.

Assume that $H$ is a subgroup from Theorem~\ref{t:SecondMaximal}$(3)$, then the number of orbits is the same as in part~$(1)$ of the same theorem. Hence $s=2$ and $\ord_r(\alpha)=r-1$. Since $s\in\pi(n)\cap\pi(\ord_r(\alpha))$, it follows that $n$ is even and $r$ is odd. Since $s=2$, we have $\ord_s(\alpha)=1$ and $(\ord_r(\alpha), \ord_s(\alpha))=1$. This provides the subgroup from part~$(4)$.

In the remaining items of Theorem~\ref{t:SecondMaximal} the number of orbits is even.
\end{proof}

\section{One-dimensional affine groups and schemes}\label{s:appl}

In this section we return to the objects from which we started the introduction. Namely, the purpose of this section is to apply our results to one-dimensional affine groups and schemes associated with them.

\begin{theo}\label{t:maxagml}
Let $\mF=\mF_q$ be the field of order $q=p^d$ for a prime $p$. Let $w$ be a primitive element of $\mF$ and $a$ the automorphism of $\mF$ such that $w^a=w^p$. Let $G=G_0\ltimes \mF^+\leqslant\AGL_1(q)$ be a one-dimensional affine group acting non $2$-transitively on $\mF$ and maximal with respect to this property. Then $G_0$ is conjugated in $\GaL_1(q)=\langle a, w\rangle$ to exactly one of the following subgroups:
\begin{itemize}
\item[$(1)$] $M(r)=\la a, w^r\ra$, where $r\in\pi(q-1)$;
\item[$(2)$] $P=\la aw, w^4\ra$, provided $p\equiv-1\pmod4$ and $d$ is even.
\end{itemize}
\end{theo}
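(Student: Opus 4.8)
The plan is to recognize Theorem~\ref{t:maxagml} as the specialization of Theorem~\ref{t:MaximalIntransitive} to the semilinear group $\GaL_1(q)$, followed by a short arithmetic translation of the word \emph{negative}. First I would set up the dictionary. The multiplicative group $\mF_q^*=\la w\ra$ is cyclic of order $n=q-1$ and is the base of the semidirect product $\GaL_1(q)=\la\phi\ra\ltimes\mF_q^*$; acting on $\mF_q^*$ by right multiplication it is a regular normal subgroup, while the Galois group $A=\la a\ra$ is the stabilizer of $1$ (an element $\phi^i m$ sends $1\mapsto m$, so fixes $1$ iff $m=1$) and is cyclic of order $d$. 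Since $w^a=w^p$ we have $\alpha=p$. Thus $\GaL_1(q)=\la a,w\ra$ is exactly a group of the kind treated in Theorem~\ref{t:MaximalIntransitive}, under the substitution $W=\mF_q^*$, $n=q-1$, $\alpha=p$.

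Next I would reduce the affine statement to an intransitivity statement inside $\GaL_1(q)$. The stabilizer of $0$ in $G=G_0\ltimes\mF_q^+$ is $G_0$, and the orbits of $G_0$ on $\mF_q\setminus\{0\}=\mF_q^*$ are precisely the irreflexive $2$-orbits of $G$; hence $G$ is $2$-transitive on $\mF_q$ if and only if $G_0$ is transitive on $W=\mF_q^*$. Since a one-dimensional affine group is determined by its zero stabilizer via $G_0\mapsto G_0\ltimes\mF_q^+$, and $G\subseteq G'$ if and only if $G_0\subseteq G_0'$, it follows that $G$ is non-$2$-transitive and maximal with this property exactly when $G_0$ is a maximal intransitive subgroup of $\GaL_1(q)$ acting on $W$. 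By the opening remark of Section~\ref{s:max}, maximal intransitive subgroups coincide with maximal proper relatively closed subgroups, so Theorem~\ref{t:MaximalIntransitive} applies verbatim: up to conjugacy in $\GaL_1(q)$, $G_0$ equals $M(r)=\la a,w^r\ra$ for some $r\in\pi(n)=\pi(q-1)$, or $P=\la aw,w^4\ra$ when $a$ is a negative automorphism of $W$.

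It remains to translate the condition on $a$. By definition $a$ is negative if and only if $4\mid n=q-1$ and $\alpha=p\equiv-1\pmod4$. If this holds then $p$ is odd with $p\equiv-1\pmod4$, and from $q-1=p^d-1\equiv(-1)^d-1\pmod4$ the divisibility $4\mid q-1$ forces $d$ to be even; conversely, if $p\equiv-1\pmod4$ and $d$ is even, then $p^d\equiv(-1)^d\equiv1\pmod4$, so $4\mid q-1$ and $a$ is negative. Hence the existence of the maximal intransitive subgroup $P$ is governed precisely by the condition that $p\equiv-1\pmod4$ and $d$ is even, matching item~$(2)$ of the statement, while item~$(1)$ is an immediate transport of the $M(r)$ family.

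The proof is thus almost entirely formal, and I expect no serious obstacle: the only genuinely non-tautological point is the congruence bookkeeping of the last paragraph, and the one thing to state carefully is the equivalence between $2$-transitivity of $G$ on $\mF_q$ and transitivity of $G_0$ on $\mF_q^*$, which is what licenses replacing the affine maximality condition by maximal intransitivity inside $\GaL_1(q)$.
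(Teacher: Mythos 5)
Your proposal is correct and follows essentially the same route as the paper: Theorem~\ref{t:maxagml} is deduced as the special case $W=\mF_q^*$, $n=q-1$, $\alpha=p$ of Theorem~\ref{t:MaximalIntransitive}, with the condition that $a$ be a negative automorphism translated into $p\equiv-1\pmod4$ and $d$ even by the same congruence computation. The only difference is that you spell out the (correct) equivalence between $2$-transitivity of $G$ on $\mF_q$ and transitivity of $G_0$ on $\mF_q^*$, which the paper leaves implicit from its introductory discussion.
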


\begin{proof} Theorem~\ref{t:maxagml} is a partial case of Theorem~\ref{t:MaximalIntransitive}. Observe that Theorem~\ref{t:MaximalIntransitive}$(2)$ requires $a$ to be negative automorphism of $W$. Since $\alpha=p$, the prime $p$ must be $-1$ modulo $4$. The order of $W$ must be divisible by $4$, so $d$ is even.
\end{proof}

Recall that a coherent configuration $\cX$ on a set $W$ is said to be {\em schurian} if there is $G\leqslant\sym(W)$ with $\cX=\inv(G)$, that is, $\orb_2(G,W)$ is the set of basis relations of~$\cX$ \cite[Definition~2.2.9]{CP}. In particular, this means that the rank of $\cX=\inv(G)$ is equal to the rank of~$G$. If $G$ is transitive, then $\cX=\inv(G)$ is a schurian (association) scheme. The clear example of a schurian scheme is the trivial scheme $\cT_W$, the unique scheme on $W$ having rank~$2$. Indeed, $\cT_W=\inv(G)$, where $G$ is $\sym(W)$ or any other $2$-transitive group acting on~$W$.

There is a natural partial order on the set of all coherent configurations on a set~$W$. Namely, given two configurations $\cX$ and $\cX^{\prime}$, we set $\mathcal{X}\leqslant\mathcal{X}^{\prime}$ if and only if each basis relation of~$\mathcal{X}$ is a union of some basis relations of~$\mathcal{X}^{\prime}$. In particular, $\mathcal{T}_{W}$ is the least element with respect to this partial order. As it is easily seen, if $G^\prime\leqslant G\leqslant\sym(W)$, then $\inv(G)\leqslant\inv(G^\prime)$.

Given the field $\mF=\mF_q$ of order $q$, an association scheme $\cX$ on $\mF$ is said to be {\em one-dimensional affine} if $\cX=\inv(G)$ for some one-dimensional affine group $G\leqslant\AGL_1(q)$. Note that the trivial scheme $\mathcal{T}_\mF$ is a one-dimensional affine, because $\mathcal{T}_\mF=\inv(G)$ for any $\operatorname{AGL}_1(q)\leqslant G\leqslant\AGL_1(q)$.

It is clear that the minimal nontrivial one-dimensional affine schemes $\inv(G)$ are in a one-to-one correspondence with the maximal non $2$-transitive one-dimensional affine subgroups $G$ described in Theorem~\ref{t:maxagml}. With a little effort this can be converted into the classification of such schemes up to isomorphism. Recall that two coherent configurations $\cX$ and $\cX'$ on a set $\Omega$ are {\em isomorphic} if there is a permutation on $\Omega$ mapping the basis relations of $\cX$ to the basis relations of~$\cX'$.

\begin{corl}\label{c:MinimalSchemes} Let $\mF=\mF_q$ be the field of order $q$. A minimal nontrivial one-dimensional affine scheme on $\mF$ is isomorphic to $\inv(G)$, where $G$ is one of the groups from parts~{\rm(1)--(2)} of Theorem~{\em\ref{t:maxagml}}. Moreover, two such schemes corresponding to distinct groups are nonisomorphic, unless $q=9$ and the groups are $M(2)$ and $P$, in which case the corresponding schemes are isomorphic.
\end{corl}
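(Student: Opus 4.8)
The plan is to prove the two assertions of the corollary in turn, the enumeration first and the isomorphism analysis second. For the enumeration I would lean on the correspondence recorded just before the corollary together with Theorem~\ref{t:maxagml}. Given a minimal nontrivial one-dimensional affine scheme $\cX$, write $\cX=\inv(G)$ with $G\leqslant\AGL_1(q)$ and replace $G$ by its relative $2$-closure; this leaves $\cX$ unchanged and makes the point stabilizer $G_0$ relatively closed in $\GaL_1(q)$. Nontriviality of $\cX$ means $G_0$ is intransitive on $\mF_q^*$, and minimality means $G_0$ is maximal among relatively closed intransitive subgroups: indeed a strictly larger relatively closed intransitive $G_0'$ would give a scheme $\cX'$ with $\cT<\cX'<\cX$ (here I use that $G'\leqslant G$ implies $\inv(G)\leqslant\inv(G')$, and that distinct relatively closed subgroups have distinct orbit partitions, hence distinct schemes). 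By Theorem~\ref{t:maxagml}, $G_0$ is $\GaL_1(q)$-conjugate to some $M(r)$ or to $P$, and since such a conjugation fixes $0$ it induces an isomorphism of the corresponding schemes; this gives the first assertion.

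For the isomorphism analysis I would first use that the multiset of nontrivial valencies, i.e.\ of orbit lengths of $G_0$ on $\mF_q^*$, is an isomorphism invariant. By Theorem~\ref{t:MaximalIntransitive} (with $\alpha=p$) the nontrivial valencies of $\inv(M(r))$ are $\tfrac{q-1}{r}$, once, and $\tfrac{(q-1)\ord_r(p)}{r}$, with multiplicity $\tfrac{r-1}{\ord_r(p)}$; as $\ord_r(p)\geqslant1$, the minimal nontrivial valency is $\tfrac{q-1}{r}$. Since $r\mapsto\tfrac{q-1}{r}$ is injective on $\pi(q-1)$, the schemes $\inv(M(r))$ are pairwise nonisomorphic. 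The scheme $\inv(P)$ has both nontrivial valencies equal to $\tfrac{q-1}{2}$, so its valency multiset matches that of some $\inv(M(r))$ only for $r=2$, and only when $P$ exists, namely when $q\equiv1\pmod 8$. Hence the single possible coincidence between distinct groups is $\inv(M(2))\cong\inv(P)$.

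The crux is thus to compare $\inv(M(2))$ with $\inv(P)$, where valencies give no further information. I would identify the connection sets: the nontrivial orbit of $M(2)$ through $1$ is $\la w^2\ra$, while a direct computation with the map $aw\colon x\mapsto wx^p$ (using $p\equiv-1\pmod4$) identifies the orbit of $P$ through $1$ with $\la w^4\ra\cup w\la w^4\ra$. Since $q\equiv1\pmod8$ forces $4\mid\tfrac{q-1}{2}$, the element $-1=w^{(q-1)/2}$ lies in $\la w^4\ra$, so both orbits are invariant under multiplication by $-1$; both schemes are therefore symmetric, and are strongly regular graphs with Paley parameters $\bigl(q,\tfrac{q-1}{2},\tfrac{q-5}{4},\tfrac{q-1}{4}\bigr)$. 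One recognizes $\inv(M(2))$ as the Paley graph and $\inv(P)$ as the Peisert graph, and then invokes the classification in \cite{Pei}: the Paley and Peisert graphs on $q$ vertices are isomorphic precisely when $q=9$, which yields the stated exception; for the affirmative case $q=9$ one may alternatively cite the uniqueness of the strongly regular graph with parameters $(9,4,1,2)$. The decisive and hardest step is exactly this last comparison, since everything preceding it is bookkeeping with orbit lengths, whereas separating the Paley and Peisert schemes requires genuinely graph-theoretic input.
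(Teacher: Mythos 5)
Your proposal is correct and follows essentially the same route as the paper: reduce minimality to maximality of the relatively $2$-closed groups of Theorem~\ref{t:maxagml}, separate the $M(r)$ by the smallest nontrivial valency $(q-1)/r$, observe that only $M(2)$ and $P$ can share a valency multiset, and settle that case by identifying the schemes with the Paley and Peisert graphs and invoking Peisert's classification. The extra details you supply (explicit connection sets, the strongly regular parameters) are consistent with, but not needed beyond, the paper's argument.
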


\begin{proof} If two schurian schemes are isomorphic, then, clearly, point stabilizers of the permutation groups which they are associated with must have the same multisets of the orbits lengths. In view of Theorem~\ref{t:MaximalIntransitive}, if a point stabilizer is isomorphic to $M(r)$, then the smallest non-singleton orbit is of size $(q-1)/r$. Therefore, no two schemes corresponding to such subgroups for distinct $r$ can be isomorphic. Thus, only the schemes $M(2)$ and $P$ might be isomorphic. These schemes have rank 3 and correspond to the well-known Paley and Peisert graphs, see, e.g., \cite{M} or \cite{BrVM}. The schemes are isomorphic if and only if the corresponding graphs are. The corollary now directly follows from \cite[Section~6]{Pei}.
\end{proof}

The full automorphism groups of the Paley and Peisert graphs (equivalently, the full $2$-closures of groups $M(2)$ and $P$) are known. The automorphism groups are one-dimensional affine (equivalently, $M(2)$ and $P$ are $2$-closed) in all cases except when $q\in\{49,81\}$ and $G=P$, see \cite[Theorem~7.1]{Pei}. In fact, all the situations where $G$ is a one-dimensional affine group of rank $3$, while $G^{(2)}$ is not, are described, see \cite[Proposition 3.1]{GVW}. Still there is no such a description if the rank of $G$ is arbitrary.

\begin{prob}\label{prob:aut}
Find all minimal nontrivial one-dimensional affine schemes $\cX$ such that $\aut(\cX)$ is not a one-dimensional affine group.
\end{prob}

Finally, we present the classification of the one-dimensional affine groups of rank~$4$.

\begin{theo}\label{t:r4agml}
Let $\mF=\mF_q$ be the field of order $q=p^d$ for a prime $p$. Let $w$ be a primitive element of $\mF$ and $a$ the automorphism of $\mF$ such that $w^a=w^p$. Let $G=G_0\ltimes \mF^+\leqslant\AGL_1(q)$ be a one-dimensional affine group acting on $\mF$ as a rank $4$ group and maximal with respect to this property. Then $G_0$ is conjugated in $\GaL_1(q)=\langle a, w\rangle$ to exactly one of the following subgroups:
\begin{itemize}
\item[$(1)$] $\la a, w^r\ra$, where $r\in\pi(q-1)\setminus\{2\}$ and $\ord_r(p)=\frac{r-1}{2}$;
\item[$(2)$] $\langle a^2, w^r\rangle$, where $r\in\pi(q-1)\setminus\{2\}$ and $\ord_r(p)=r-1$;
\item[$(3)$] $\langle a, w^{r^2}\rangle$, where $r\in\pi(q-1)$, $r^2$ divides $q-1$, and $\ord_{r^2}(p)=r^2-r$;
\item[$(4)$] $\langle aw^r, w^{2r}\rangle$, where $r\in\pi(q-1)\setminus\{2\}$ and $\ord_r(p)=r-1$, provided $p$ is odd.
\end{itemize}
\end{theo}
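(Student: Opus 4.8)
The plan is to recognize Theorem~\ref{t:r4agml} as the specialization of Corollary~\ref{c:RankFour} to the concrete group $\GaL_1(q)$, combined with a translation of the abstract hypotheses and a refinement of the conjugacy relation. First I would fix the dictionary between the affine and the abstract settings: identify $W$ with the multiplicative group $\mF_q^*=\la w\ra$, a cyclic group of order $n=q-1$, and let $A=\la a\ra$ with $a$ the Frobenius automorphism, so that $\GaL_1(q)=\la a,w\ra=AW$ is a permutation group on $W$ of exactly the type studied throughout the paper, with $\alpha=p$. Under this identification the point stabilizer $G_0$ of the affine group $G=G_0\ltimes\mF^+$ is a subgroup of $\GaL_1(q)$, and the rank of $G$ acting on $\mF$ equals $1$ plus the number of orbits of $G_0$ on $\mF_q^*=W$. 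Hence $G$ has rank $4$ if and only if $G_0$ has exactly three orbits on $W$.

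Next I would convert ``maximal rank $4$'' into ``relatively closed with three orbits''. Passing to the relative closure does not change the orbits, and hence preserves the rank, so a maximal rank-$4$ affine group has a relatively closed point stabilizer. Conversely, if $H\leqslant\GaL_1(q)$ is relatively closed with three orbits and $H'>H$, then the orbits of $H'$ properly coarsen those of $H$ (otherwise $H'$ would lie in the relative closure of $H$, namely $H$ itself), so $H'$ has strictly fewer than three orbits; thus $H$ is maximal among subgroups with three orbits. Therefore the maximal rank-$4$ point stabilizers are exactly the relatively closed subgroups of $\GaL_1(q)$ with three orbits on $W$, which are listed, up to conjugacy in $\hol(W)$, in Corollary~\ref{c:RankFour}. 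Substituting $\alpha=p$ and $n=q-1$ produces the four families; the apparent discrepancies in the side conditions are cosmetic, since in (1) the equation $\ord_r(p)=(r-1)/2$ already forces $r$ to be odd, and in (4) the requirement ``$\alpha$ odd and $n$ even'' collapses to ``$p$ odd'', as $p$ odd makes $q-1$ automatically even.

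The last and only genuinely delicate step is to upgrade the conjugacy from $\hol(W)$ to $\GaL_1(q)$, since Corollary~\ref{c:RankFour} only asserts $\hol(W)$-conjugacy. The key observation is that each of the four representatives is invariant under $\aut(W)$. Indeed $\aut(W)=\aut(C_{q-1})$ is abelian, so every power $a^k$ is centralized by $\aut(W)$, and the subgroups $\la w^r\ra$, $\la w^{r^2}\ra$, $\la w^{2r}\ra$ are characteristic in the cyclic group $W$; for $H_4=\la aw^r,w^{2r}\ra$ one checks that conjugation by $\psi\colon w\mapsto w^c$ carries it to $\la aw^{rc},w^{2r}\ra$, which equals $H_4$ because $c$ is coprime to the even number $q-1$ and hence odd, so $w^{r(c-1)}\in\la w^{2r}\ra$. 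Writing an arbitrary $g\in\hol(W)$ as $g=\phi t$ with $\phi\in\aut(W)$ and $t\in W$, invariance gives $H_i^{g}=(H_i^{\phi})^{t}=H_i^{t}$ with $t\in W\leqslant\GaL_1(q)$, so any $\hol(W)$-conjugacy between $G_0$ and an $H_i$ is realized inside $\GaL_1(q)$; uniqueness is inherited from Corollary~\ref{c:RankFour} because $\GaL_1(q)$-conjugacy is finer than $\hol(W)$-conjugacy. I expect this conjugacy refinement, and in particular the verification for $H_4$, to be the main point requiring care, the remainder being a direct reading of Corollary~\ref{c:RankFour}.
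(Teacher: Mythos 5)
Your proposal is correct and follows essentially the same route as the paper: specialize Corollary~\ref{c:RankFour} with $n=q-1$, $\alpha=p$, and then upgrade $\hol(W)$-conjugacy to $\GaL_1(q)$-conjugacy by checking that each representative is $\aut(W)$-invariant (your direct computation for $H_4$ via an odd multiplier $c$ is the same argument the paper phrases in terms of the $2$-part $u$ of $w^r$ and the coset $u\langle u^2\rangle$). The paper leaves the translation ``maximal rank $4$ $\Leftrightarrow$ relatively closed with three orbits on $W$'' implicit, so your explicit justification of that step is a welcome addition rather than a deviation.
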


\begin{proof} All items of this statement are partial cases of Corollary~\ref{c:RankFour} with $n=q-1$ and $\alpha=p$. Note that the conditions $\alpha$ is odd and $n$ is even from part $(4)$ of the corollary are covered by the condition $p$ is odd.

So the only thing we actually have to prove here is the conjugacy inside $\GaL_1(q)$, because Corollary~\ref{c:RankFour} gives only the conjugacy in $\hol(W)$. Observe that the subgroups from parts (1)--(3) are invariant under the automorphisms of $W$. Therefore, $\hol(W)$-conjugacy in these cases is equivalent to $W$-conjugacy. If $H$ is a subgroup from $(4)$, then $H=\la au, w^{2r}\ra$, where $u$ is a suitable generator of the Sylow $2$-subgroup of $W$.

If $b\in\aut(W)$, then $H^b=\la au', w^{2r}\ra$, where $u'$ is a generator of the Sylow $2$-subgroup of $W$. Both $u$ and $u'$ lie in the same coset of the subgroup $\la u^2\ra$ and $u^2\in\la w^{2r}\ra$. Thus, $H^b=H$ for every $b\in\aut(W)$ and again $H^{\hol(W)}=H^W$.
\end{proof}

It follows from the definition of a schurian scheme, that every one-dimensional affine scheme of rank~$4$ is isomorphic to $\inv(G)$ for one of the groups $G$ from parts (1)--(4) of Theorem~\ref{t:r4agml}. The question whether two such schemes corresponding to distinct groups can be isomorphic is open.

\begin{prob}\label{prob:rank4schemes}
Describe the one-dimensional affine schemes of rank $4$ up to isomorphism.
\end{prob}

\bigskip

\begin{center} \textbf{Acknowledgements}
\end{center}

The authors thank Maria Grechkoseeva, Mikhail Muzychuk, Ilia Ponomarenko, and Grigory Ryabov for helpful discussions on the matter, and the reviewer for the valuable remarks on the text of the paper.

The research was carried out within the framework of the Sobolev Institute of Mathematics state contract (project FWNF-2026-0017).

\end{document}